\documentclass[12pt]{amsart}
\usepackage{fullpage,booktabs, graphicx,psfrag,amsmath, amssymb,amsthm, hyperref}

\newcommand{\R}{\mathbb{R}}

\newcommand{\Z}{\mathbb{Z}}
\newcommand{\N}{\mathbb{N}}

\newcommand{\br}{\mathbf{br}}  





\newcommand{\eg}{{\it e.g.}}
\newcommand{\ie}{{\it i.e.}}

\newtheorem{theorem}{Theorem}[section]
\newtheorem{prop}{Proposition}[section]

\newtheorem{lemma}[theorem]{Lemma}
\newtheorem{remark}{Remark}
\newtheorem{hyp}{Hypothesis}

\title{A shape theorem for exploding sandpiles}
\author{Ahmed Bou-Rabee}

\begin{document}

	\begin{abstract}
		We study scaling limits of exploding Abelian sandpiles using ideas from percolation and front propagation in random media. 
		We establish sufficient conditions under which a limit shape exists and show via a family of counterexamples that convergence may not occur in general. A corollary of our proof is a simple criteria for determining if a sandpile is explosive; this 
		strengthens a result of Fey, Levine, and Peres (2010).

	\end{abstract}
\maketitle

\section{Introduction}
\subsection{Overview}
We consider the Abelian sandpile growth model on the integer lattice. Start with a {\it background} of indistinguishable chips, $\eta: \Z^d \to \Z$, add $n$ chips at the origin, 
and attempt to {\it stabilize} via {\it parallel toppling}:
\begin{equation} \label{eq:parallel_toppling}
\begin{aligned}
v_{t+1} &= v_{t} + 1 \{ s_{t} \geq 2 d \} \\
s_{t+1} &= s_{t} + \Delta (v_{t+1} - v_{t}), 
\end{aligned}
\end{equation}
where $\Delta v(x) = \sum_{y\sim x} (v(y) - v(x))$ is the Laplacian on $\Z^d$, $v_0 = 0$ is the initial {\it odometer}, and $s_0 = \eta + n  \delta_0$ is the starting sandpile. We say $s_0$ is {\it stabilizable} if there is $T < \infty$ so that 
$v_{t} = v_{T}$ for all $t \geq T$. A background is {\it robust} 
if $\eta + n  \delta_0$ is stabilizable for all $n \geq 1$, and otherwise 
is {\it explosive}. When $s_0 = \eta + n  \delta_0$ is not stabilizable, it is explosive and the infinite sequence $\{s_t\}_{t \geq 0}$ is an {\it exploding} sandpile. See Figures \ref{fig:exploding_random1} and \ref{fig:exploding_random2}.

Fey, Levine, and Peres coined these notions in \cite{fey2010growth} (see also \cite{fey2005organized}) and provided sufficient conditions for determining if a background is explosive or robust: backgrounds $\eta \leq (2d-2)$ are always robust, but otherwise can be robust or explosive, depending on the arrangement of sites with $(2d-1)$ chips. In fact, they showed that if $\eta \leq (2d - 2)$, not only 
is the background robust, but if $n$ chips are added to the origin of such $\eta$, the diameter of the set of sites which topple grows like $n^{1/d}$.

\begin{figure}[b!]
	\includegraphics[width=0.2\textwidth]{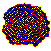}
	\includegraphics[width=0.2\textwidth]{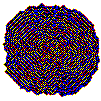}
	\includegraphics[width=0.2\textwidth]{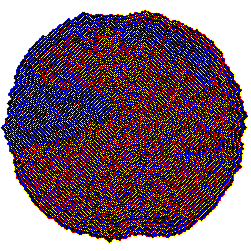}
	\includegraphics[width=0.2\textwidth]{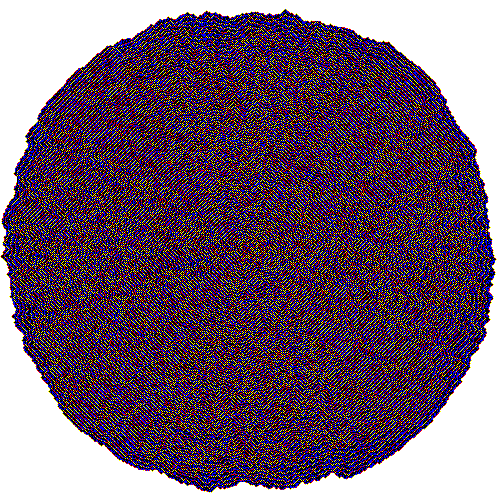}
	\caption{$s_t  1\{ v_t > 0 \}$ for $\eta \sim \mbox{Bernoulli}(3,2,1/2)$ and $t = 50, 100,250,500$.
		The color white denotes sites which haven't toppled yet, otherwise white, yellow, red, blue, and black correspond to values $0, 1,2,3, (\geq 4)$.
	}  \label{fig:exploding_random1}
\end{figure}

Pegden and Smart used this bound together with the theory of viscosity solutions to show that the terminal odometer
for $n  \delta_0$, after a rescaling, converges to the solution of a fully nonlinear elliptic PDE \cite{pegden2013convergence}.
This breakthrough then led to an explanation for the patterns which appear in two-dimensional sandpiles \cite{pegden2020stability, levine2016apollonian, levine2017apollonian}. Recently, the author used stochastic homogenization methods to extend convergence to all initial backgrounds which are stationary, ergodic, and bounded from above by $(2 d - 2)$ \cite{bou2019convergence}. 

\begin{figure}[t]
	\includegraphics[width=0.2\textwidth]{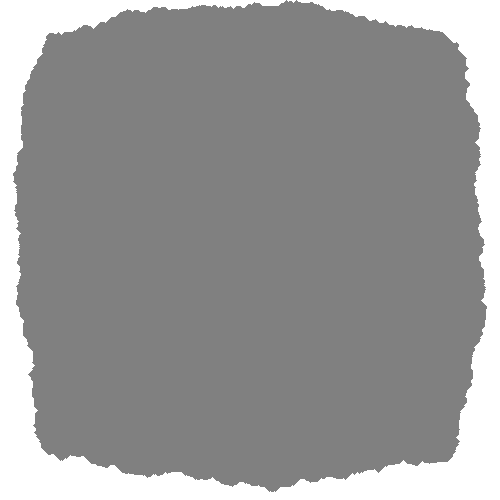}
	\includegraphics[width=0.2\textwidth]{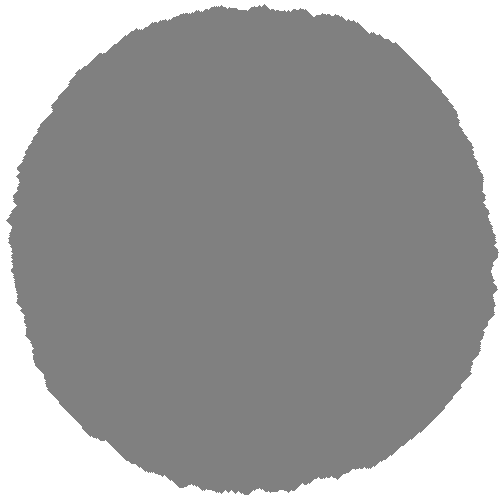}
	\includegraphics[width=0.2\textwidth]{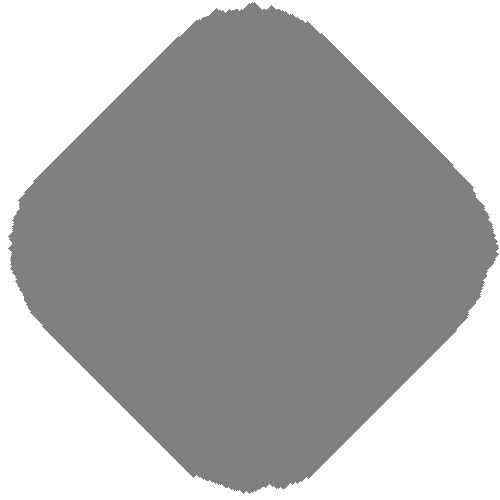} \\
	\includegraphics[width=0.2\textwidth]{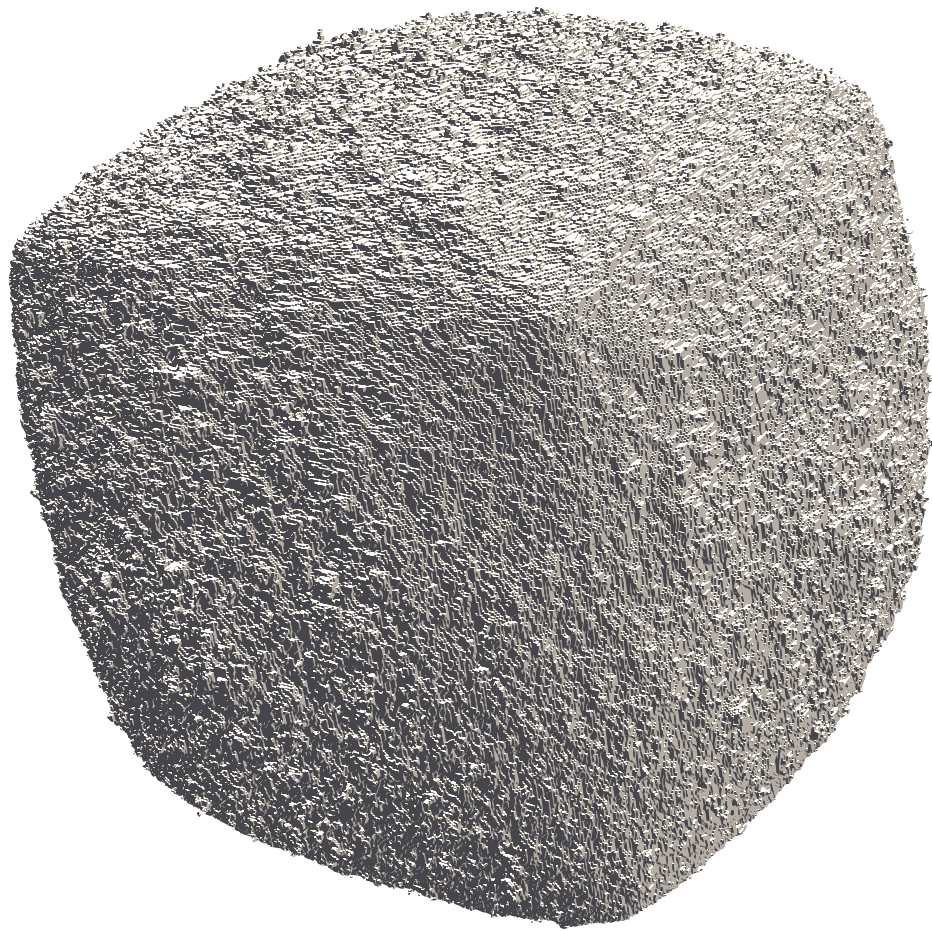}
	\includegraphics[width=0.2\textwidth]{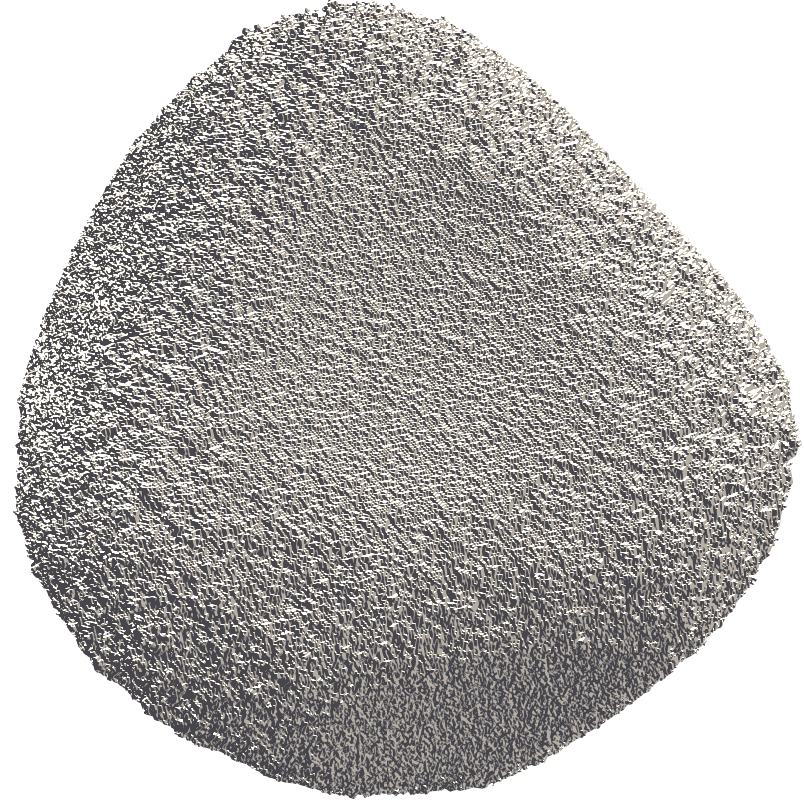}
	\includegraphics[width=0.2\textwidth]{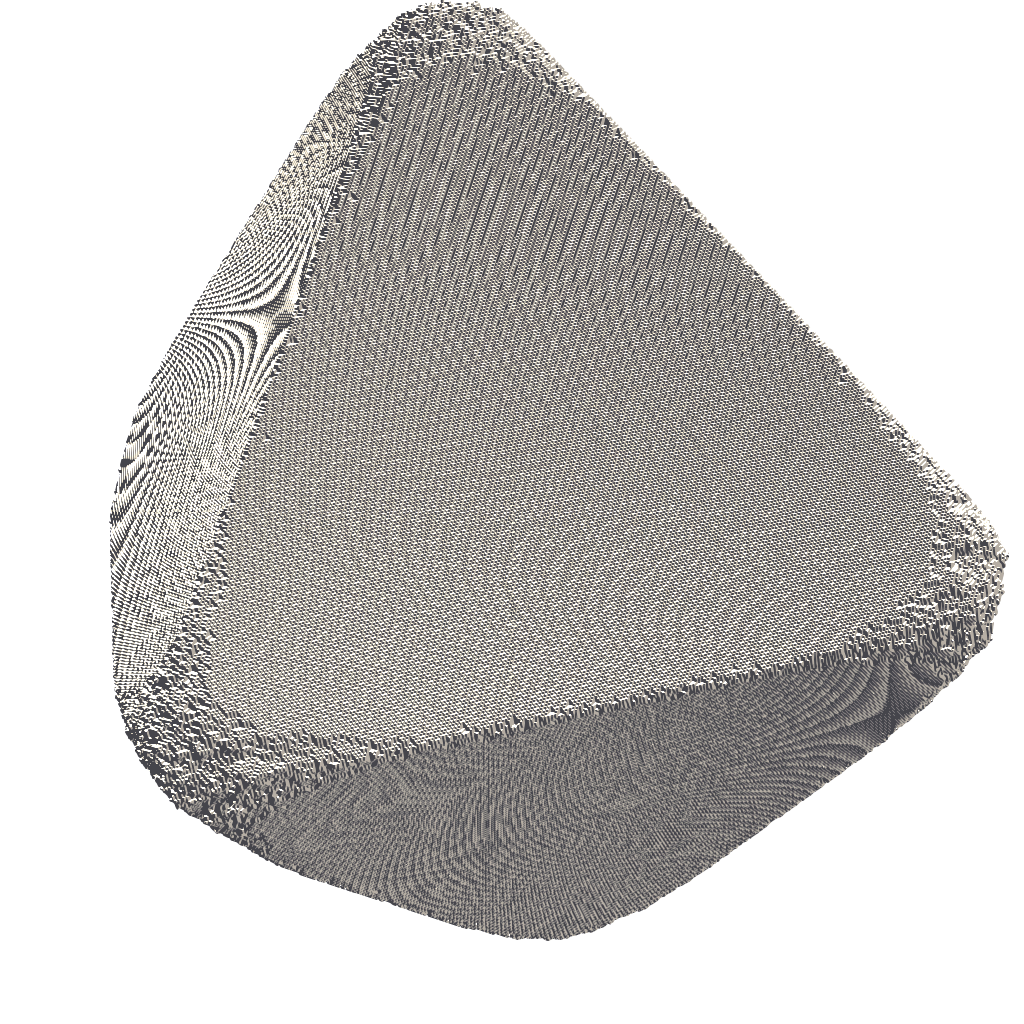}
	\caption{Snapshot of the support of an exploding sandpile $1 \{v_t > 0\}$  for $\eta \sim \mbox{Bernoulli}(2 d - 1, 2d - 2,p)$
		the first time it exits a box of side length $500$. The top row is $d = 2$ and the bottom $d = 3$. 
		From left to right, $p = 1/4, 1/2, 3/4$.}  \label{fig:exploding_random2}
\end{figure}

These results explain the phenomena of {\it scale-invariance} in sandpiles which have a {\it compact}, $n^{1/d}$, growth rate --- large, compact-growth sandpiles look like high-resolution versions of smaller sandpiles. Simple models of growth are of interest to the mathematics and physics communities ---  see, for example, \cite{dhar2013sandpile,diaconis1991growth, gravner1998cellular, packard1985two} and the references therein. The Abelian sandpile in particular has a rich history,  \cite{liu1990geometry, dhar1999abelian,le2002identity, ostojic2003patterns,  redig2005mathematical,fey2008limiting, holroyd2008chip, fey2009stabilizability, levine2009strong, levine2010sandpile, paoletti2013deterministic,levine2017laplacian, jarai2018sandpile,  klivans2018mathematics, hough2019sandpiles, aleksanyan2019discrete, lang2019harmonic, hough2019cut,chen2020laplacian, melchionna2020sandpile, alevy2020limit}.

In this paper, we study limit shapes of sandpiles in the explosive regime. The techniques used differ fundamentally from the existing compact-growth theory. Indeed, as we will demonstrate, some explosive sandpiles (both random and deterministic) do not converge. On the other hand, compact-growth sandpiles essentially always have limits --- the argument there is `soft' and applies in wide generality. Our proof below is quantitative and involves establishing specific, finite-scale estimates. We identify sufficient conditions under which exploding sandpiles converge to the level set of an asymmetric norm  --- much like in first-passage percolation  \cite{cox1981some} and threshold growth \cite{willson1978convergence, gravner1993threshold}.

\subsection{Main results}	
For expositional clarity, we consider one family of random, explosive backgrounds with limit shapes. The reader interested in generalizations may consult Section \ref{sec:generalization}. Suppose $d \geq 2$ and $\eta:\Z^d \to \Z$ is drawn from a product measure $\mathbf{P}$ with
\begin{equation} \label{eq:toy-eta-definition}
\begin{cases}
\mathbf{P}(\eta(0) = (2 d - 2)) = 1-p \\
\mathbf{P}(\eta(0) = (2 d - 1)) = p.
\end{cases}
\end{equation}
Fey, Levine, and Peres showed the following.
\begin{theorem}[Proposition 1.4 in \cite{fey2010growth}]\label{prop:fey_levine_peres_explode}
	Let $\eta$ be as in \eqref{eq:toy-eta-definition}.  If $p > 0$, $\eta$ is explosive with probability 1. 
\end{theorem}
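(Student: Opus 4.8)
The plan is to show that for any $p>0$, almost surely one can find a finite region where the background is dense enough in $(2d-1)$-sites to force an explosion when enough chips are added at the origin; then transport that explosion back to the origin using the Abelian property. More precisely, I would first establish a deterministic \emph{seed} lemma: there is a finite configuration $C$ — say a solid box $Q_L = [-L,L]^d$ in which \emph{every} site carries $(2d-1)$ chips — such that adding a single chip anywhere inside $Q_L$ (to the all-$(2d-1)$ background restricted to $Q_L$, with the rest of the lattice held at $(2d-2)$) already fails to stabilize for $L$ large enough. The intuition is that on a fully-saturated block each toppling redistributes $2d$ chips but each site only needs one more chip to topple again; a site deep inside $Q_L$ that topples sends $2d$ chips out and receives (generically, before the wave reaches the boundary) $2d$ chips back, so the number of unstable sites cannot decrease, and in fact the wave front keeps expanding. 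One must check this persists past the boundary of $Q_L$: once the odometer is large on $\partial Q_L$, the $(2d-2)$ exterior behaves, locally, like a background that receives a growing influx, and a comparison (monotonicity in the background) with the known fact that adding $n\delta_0$ to the all-$(2d-1)$ background on $\Z^d$ is explosive — or a direct self-similar/induction argument on nested boxes — closes the loop.

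Next I would use the Abelian property together with monotonicity in two ways. Monotonicity in the number of added chips: if $n_0\delta_0$ added to $\eta$ restricted to some translate of the seed region already explodes, then so does $n\delta_0$ for every $n\ge n_0$. Monotonicity / locality of toppling: if a seed box sits at lattice distance $R$ from the origin, then adding sufficiently many chips at the origin (enough to drive the odometer up to the required threshold everywhere on a path of boxes connecting $0$ to the seed — each intermediate box only needs finitely many extra chips to ``ignite'' the next, since intermediate sites already hold $2d-2$) will eventually feed the seed enough to trigger its explosion. Because explosion of $\eta + n\delta_0$ for a single $n$ suffices to call $\eta$ explosive, it is enough to know that \emph{some} translate of the seed configuration appears somewhere in $\Z^d$ — we do not even need it near the origin.

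The probabilistic input is then essentially trivial by Borel–Cantelli / ergodicity: under the product measure with $\mathbf P(\eta(0)=2d-1)=p>0$, the event ``$Q_L$ centered at $x$ is fully saturated'' has probability $p^{|Q_L|}>0$ and these events, over a sparse sublattice of disjoint translates, are i.i.d.; hence almost surely infinitely many such saturated boxes occur. On that full-measure event, pick one such box, let $R$ be its distance to the origin, and take $n$ large enough (depending on $R$ and $L$, both finite) to ignite it per the previous paragraph; then $\eta + n\delta_0$ does not stabilize, so $\eta$ is explosive. Since this holds a.s.\ for every $p>0$ and the statement is dimension-free (the seed argument works in every $d\ge 1$, where for $d=1$ the threshold $2d-1=1$ makes the saturated-block mechanism especially transparent), the theorem follows.

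The main obstacle I anticipate is the deterministic seed lemma — specifically, proving that the explosion inside the saturated box genuinely \emph{propagates} into the $(2d-2)$ exterior rather than petering out once the toppling wave leaves $Q_L$. The clean way to handle this is a monotone comparison: show the seeded configuration dominates (sitewise, after finitely many topplings) a configuration known to be non-stabilizable, e.g.\ via an abstract least-action / odometer-monotonicity principle (the odometer is monotone nondecreasing in both the background and the added mass, and an unstable configuration has infinite odometer). Everything else — the Abelian reshuffling, the ignition chain from the origin to the seed, and the Borel–Cantelli step — is routine given that lemma.
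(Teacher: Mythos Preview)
The paper does not give its own proof of this statement; it is quoted from Fey--Levine--Peres \cite{fey2010growth}. The paper's own contributions in this direction are the quantitative crossing estimate (Proposition~\ref{prop:crossing_time}) and the extension Theorem~\ref{theorem:explosive}, and your proposal can be measured against the mechanism those results isolate.

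Your seed lemma, as stated, is false. Take $d=1$: the configuration with $(2d-1)=1$ on $[-L,L]$ and $(2d-2)=0$ outside, plus one chip at the origin, stabilizes in finitely many steps --- for $L=1$ one checks by hand that $(\dots,0,1,2,1,0,\dots)$ reaches $(\dots,1,1,0,1,1,\dots)$ after three parallel steps. In any dimension the seed equals the constant $(2d-2)$ background plus the finitely supported perturbation $1\{x\in Q_L\}$, and a finite perturbation of a robust background does not become explosive; the compact-growth bound of \cite{fey2010growth} that you yourself invoke for the exterior is precisely what makes such configurations stabilize. The monotone comparison you propose --- dominating by the all-$(2d-1)$ background --- also points the wrong way: your seed lies \emph{below} all-$(2d-1)$, so explosiveness of the latter implies nothing about the former.

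What your argument is missing is that no finite seed is self-sustaining on a $(2d-2)$ exterior; the explosion propagates only because the \emph{random} environment keeps supplying $(2d-1)$ sites along the advancing front. This is what Fey--Levine--Peres use and what Proposition~\ref{prop:crossing_time} here makes quantitative: in a large box, every axis-parallel line contains a $(2d-1)$ site with high probability, so once one line has fired the whole box fires, and this iterates. Your Borel--Cantelli step and the idea of igniting from the origin are sound, but they must feed into a propagation argument that uses the ambient density of $(2d-1)$ sites, not a deterministic seed embedded in $(2d-2)$.
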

Fix $p > 0$ and denote the (almost surely finite) {\it explosion threshold} by
\begin{equation} \label{eq:first_explosion}
M_{\eta} :=  \min\{ n \geq 1 : \mbox{ $\eta + n  \delta_0$ is not stabilizable} \}. 
\end{equation} 
We prove that the support of the infinite sequence of parallel toppling odometers, $\{v_t\}_{t \geq 1}$, for the explosive sandpile $s_0 = \eta + M_{\eta}  \delta_0$ converges under rescaling. 
\begin{theorem} \label{theorem:limit_shape}
	Let $\eta$ be as in \eqref{eq:toy-eta-definition}. 
	There exists a convex domain $\mathcal{B}(p) \subset \R^d$ so that on an event of 
	probability 1, 
	\[
	| \{ \bar{v}_t > 0\} \triangle \mathcal{B}(p)| \to 0, 
	\]
	where $\bar{v}_t(x) := v_t([t x])$. 
\end{theorem}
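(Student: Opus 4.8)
The plan is to recast Theorem~\ref{theorem:limit_shape} as a shape theorem for a first-passage time and to prove that by a subadditive-ergodic argument in the spirit of first-passage percolation, the one genuinely quantitative input being a linear bound on the speed of the toppling front. Since parallel toppling makes the odometer coordinatewise nondecreasing in $t$, the sets $\{v_t>0\}$ increase and
\[
\{v_t>0\}=\{x\in\Z^d:\tau(x)\le t\},\qquad \tau(x):=\min\{t\ge 0:v_t(x)\ge 1\},
\]
so $\{\bar v_t>0\}=\{x:\tau([tx])\le t\}$. A short connectivity argument, using that an exploding sandpile never stops toppling, shows $\tau(x)<\infty$ almost surely for every $x$, and since $\eta\le 2d-1<2d$ no site topples before one of its lattice neighbours does, which forces $\tau(x)\ge\|x\|_1$ and hence $\{\bar v_t>0\}\subseteq[-1,1]^d$ for all $t$. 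Conversely, once a site has toppled and a bounded neighbourhood of it has become ``saturated'' (toppled enough times to behave as a persistent source), each neighbour topples within a bounded number of further steps; iterating gives the a priori estimate $\tau(x)\le\tau(y)+C\|x-y\|_1$ once the front has cleared $y$. It therefore suffices to produce a positively $1$-homogeneous $T\colon\R^d\to[0,\infty)$ with $\|x\|_1\le T(x)\le C\|x\|_1$ and $t^{-1}\tau([tx])\to T(x)$ almost surely and locally uniformly; the Lipschitz estimate promotes pointwise to uniform convergence on compacts, and with $\mathcal B(p):=\{T<1\}$ this gives convergence of $\{\bar v_t>0\}$ to $\mathcal B(p)$ in Hausdorff distance, hence -- $\mathcal B(p)$ being convex, so $\partial\mathcal B(p)=\{T=1\}$ Lebesgue-null -- also in symmetric difference.

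For the required subadditivity, introduce for $x\in\Z^d$ the first time $T_x(z)$ that $z$ topples in the sandpile $\eta$ ``re-seeded'' at $x$, say with $M$ extra chips at $x$ where $M$ is the explosion threshold of the background translated so that $x$ becomes the origin; then $T_0=\tau$ and, by stationarity of $\mathbf P$, the field $(T_x(z))_z$ has the law of $(T_0(z-x))_z$. In the original sandpile, within a bounded saturation time after $\tau(x)$ its configuration near $x$ dominates, pointwise, the re-seeded configuration at $x$; the abelian property then couples the two evolutions from that time on so that every site topples no later in the original process than in the re-seeded one, whence $\tau(z)\le\tau(x)+T_x(z)+C$. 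Up to such lower-order (and stationary) corrections this is the subadditive relation $T_0(ne)\le T_0(me)+T_{me}(ne)$ along any lattice direction $e$, with $(T_{me}(ne))$ stationary and ergodic in $m$.

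The crux -- and the step I expect to be the main obstacle -- is the integrability hypothesis $\mathbf E\,T_0(e)<\infty$, equivalently that a re-seeded front advances at \emph{positive} linear speed. This fails for general explosive backgrounds, which is exactly why some exploding sandpiles admit no limit shape; here it uses crucially that $\eta\ge 2d-2$ everywhere -- so a seed never runs out of ``ammunition'' and the odometer it lays down accumulates -- together with $p>0$. The plan is a finite-scale renormalization: exhibit a deterministic self-propagating avalanche on the maximal background $\eta\equiv 2d-1$ that, from a saturated region, is guaranteed to advance the front by at least one lattice step within a bounded number of steps; then show, by a percolation-type estimate, that configurations at least this favourable occur with density bounded below, uniformly in the portion of the sandpile already revealed, so that over a positive fraction of successive bounded time windows the front advances by $\ge 1$ with overwhelming probability. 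Summing the windows yields $T_0(ne)\le Cn$ with probability tending to $1$, and in expectation. Controlling the dependence on the already-revealed configuration and ruling out stalling of the front is the heart of the argument.

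Granting this, Kingman's subadditive ergodic theorem (the product measure $\mathbf P$ is ergodic under lattice translations) produces, for each $e\in\Z^d$, a deterministic limit $T(e):=\lim_n n^{-1}T_0(ne)$; positive homogeneity extends $T$ to rational multiples of lattice vectors and, using the deterministic bounds above, to a continuous function on $\R^d$ with $\|x\|_1\le T(x)\le C\|x\|_1$, while subadditivity of $T$ makes $\{T<1\}$ convex. The Lipschitz estimate of the first paragraph upgrades the directional limits to locally uniform convergence and transfers them from $T_0$ to $\tau$, so that $t^{-1}\tau([tx])\to T(x)$ almost surely and locally uniformly, and taking $\mathcal B(p):=\{T<1\}$ finishes the proof.
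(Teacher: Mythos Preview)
Your overall architecture---reduce to a first-passage time, establish approximate subadditivity, invoke Kingman, then upgrade to locally uniform convergence---matches the paper's. But the subadditivity step, which you treat as routine, is precisely where the paper identifies the main obstacle and where your sketch has a genuine gap.

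You write that ``within a bounded saturation time after $\tau(x)$ its configuration near $x$ dominates, pointwise, the re-seeded configuration at $x$; the abelian property then couples the two evolutions \ldots\ whence $\tau(z)\le\tau(x)+T_x(z)+C$.'' This is not justified, and in fact the paper explicitly flags why it fails: when the explosion from the origin first reaches $x$, it has \emph{mixed up the background} around $x$. The re-seeded process at $x$ starts from the pristine environment $\eta$ with $M_x$ extra chips at $x$, whereas the original process at time $\tau(x)$ sees a modified configuration $\eta+\Delta v_{\tau(x)}$ in which neighbouring sites may have gained or lost chips, and $x$ itself has toppled only once (not $M_x$ times). There is no obvious monotone coupling between these two evolutions, and your ``bounded saturation time'' is asserted, not proved---indeed $M_x$ can be large and there is no a priori bound on how long the original odometer takes to reach $M_x$ at $x$. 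The same issue undermines your Lipschitz estimate $\tau(x)\le\tau(y)+C\|x-y\|_1$.

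The paper's remedy is to interpose a different process, the \emph{last-wave}: a $\{z\}$-frozen parallel toppling in which the source is pinned at a fixed height $\hat M_\eta(z)$. The key structural fact (Lemma~\ref{lemma:last_wave_bound}) is that the last-wave is bounded by $1$ outside a finite ``penultimate cluster'' $\mathcal P(z)$; this cap is what makes an exactly subadditive quantity available---the penultimate-cluster arrival times $\tilde T$ (Lemma~\ref{lemma:approx_subadditivity})---and allows Kingman to be applied cleanly. The comparison between the last-wave and the true explosion (Proposition~\ref{prop:approximate_subadditive}) is then carried out separately and is itself delicate: it relies on $\eta\ge 2d-2$ through an induction that simultaneously tracks the odometer's nearest- and diagonal-neighbour increments (the regularity \eqref{eq:induction2}). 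Your proposal conflates these two distinct difficulties into a single ``abelian coupling'' sentence; without the last-wave (or an equivalent device) and the regularity induction, the argument does not close.
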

In fact, we prove something stronger. Not only does the support of the explosion converge, 
but the rate at which the explosion spreads also converges.

\begin{theorem} \label{theorem:convergence_of_time_constant}
	Let $\eta$ be as in \eqref{eq:toy-eta-definition} and let $T_{\eta}(x) := \min\{t \geq 1 : v_t(x) > 0 \}$. On an event of probability 1, the rescaled arrival times $\bar{T}_{\eta}(x) := n^{-1} T_{\eta}([ n x])$
	converge locally uniformly to $\mathcal{N}_{p}$, a continuous, convex, one-homogeneous function on $\R^d$. 
\end{theorem}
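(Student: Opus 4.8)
The plan is to realize the arrival-time function $T_\eta$ as a (nearly) subadditive cocycle over the ergodic measure-preserving system $(\Z^d,\mathbf P,\text{shifts})$ and apply the subadditive ergodic theorem, in the spirit of first-passage percolation \cite{cox1981some} and threshold growth \cite{gravner1993threshold}. The work is in establishing the subadditivity: unlike in those models, the ``medium'' here—the sandpile configuration—is itself modified as the explosion sweeps through it, so the relevant inequality is not a triviality about concatenating paths but a statement that the explosion is, in a suitable sense, \emph{self-reproducing}. The two basic tools are the abelian property and monotonicity: $v_t(x)$ is non-decreasing in $t$ and monotone in the starting sandpile, so an explosion begun from a pointwise-larger configuration spreads at least as fast. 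I would also record at the outset a \emph{finite speed of propagation} bound: in one step of \eqref{eq:parallel_toppling} a site topples only if a neighbor toppled at the previous step, whence $\{v_{t+1}>0\}\subseteq\{v_t>0\}\cup\partial\{v_t>0\}$ and therefore $T_\eta(x)\ge \|x\|_1$. This pins down the linear lower bound that makes $\mathcal N_p$ positive away from the origin and, by summing over unit steps, gives the equicontinuity needed at the end.

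The heart of the argument is a \textbf{relighting lemma}: there is a length scale $L=L(p)$, a height $h=h(p)$, and an almost surely finite random variable $R(\omega,x)$ such that once $x$ has toppled in the true explosion, within $R(\omega,x)$ further steps the ball $B(x,L)$ is entirely toppled with odometer at least $h$, and at that time the configuration there dominates $\theta_x\eta+M_{\theta_x\eta}\cdot\delta_x$, i.e.\ the seed of a fresh explosion centered at $x$. Granting this, monotonicity and the abelian property yield the subadditive-type inequality
\[
T_\eta(x+y)\ \le\ T_\eta(x)+R(\omega,x)+\widetilde T_{\theta_x\eta}(y),
\]
where $\widetilde T_{\theta_x\eta}(\cdot)$ is, under $\mathbf P$, distributed as $T_\eta(\cdot)$ shifted by $x$. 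Folding the lower-order cost $R$ into a modified, genuinely subadditive quantity (its stationarity and finite expectation give $R(\omega,nx)/n\to0$), Kingman's theorem produces, for each $x\in\Z^d$, an almost sure and $L^1$ limit $\mathcal N_p(x)=\lim_n n^{-1}T_\eta(nx)$; ergodicity of $\mathbf P$ makes it deterministic. Subadditivity together with one-homogeneity along rational rays forces convexity, convexity together with the linear upper and lower bounds forces continuity, and $\mathcal N_p$ extends to a continuous, convex, one-homogeneous function on $\R^d$. The same lemma, run forward, shows the front advances at positive speed, so $T_\eta(x)<\infty$ for all $x$ almost surely.

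To upgrade radial convergence to local uniform convergence of $\bar T_\eta$ I would run the standard shape-theorem scheme: given $\varepsilon$, pick finitely many directions whose $\mathcal N_p$-level sets $\varepsilon$-approximate $\mathcal B(p):=\{\mathcal N_p\le1\}$, control $T_\eta$ along those directions by the radial limits, and interpolate between directions using monotonicity of $\{v_t>0\}$ in $t$ and the finite-speed equicontinuity; a union bound over the $O(n^{d-1})$ frontier sites of $B_n$—this is where a tail estimate on $R(\omega,x)$ is needed—promotes the estimate to an almost sure one. Theorem \ref{theorem:limit_shape} then falls out with $\mathcal B(p)=\{\mathcal N_p\le 1\}$.

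The main obstacle is the relighting lemma, and it is precisely the step that fails for general explosive backgrounds. A priori the explosion front could be ``thin'': a site might topple only once or twice before the activity moves on, leaving behind a configuration too depleted to reignite, in which case $R(\omega,x)$ is infinite and no time constant exists. One must use that the sandpile is \emph{genuinely} exploding—the odometer at already-toppled sites tends to infinity—together with the positive density of sites carrying $(2d-1)$ chips, which, being exactly one below the toppling threshold, are ``primed'' and cheap to ignite, to show that within bounded distance of any frontier site, after bounded expected time, there appears a local configuration dominating an explosion seed, and moreover that the cost $R(\omega,x)$ of producing it has stretched-exponential tails. Making this quantitative—a finite-scale statement about how robustly the front advances, proved by a renormalization/block argument comparing the local picture to the Fey--Levine--Peres explosion construction of \cite{fey2010growth}—is the technical core.
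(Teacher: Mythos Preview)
Your high-level plan---realize the arrival time as an approximately subadditive cocycle and invoke Kingman---matches the paper, and you have correctly put your finger on the obstruction: the medium is modified as the front passes, so concatenation is not free. But the specific device you propose to overcome this, the ``relighting lemma,'' does not hold as stated, and the paper's route around the obstacle is genuinely different.

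The claim that after a bounded delay the \emph{configuration} near $x$ dominates $\theta_x\eta + M_{\theta_x\eta}\cdot\delta_x$ is false. Behind the front one has $s_t(y)=\eta(y)+\Delta v_t(y)$, and at sites which have toppled while some neighbors have not, $\Delta v_t(y)<0$; the background there is \emph{depleted}, not enriched. Nor does the odometer-level comparison ``$v_{T_\eta(x)+R+t}\ge v^{(x)}_t$'' follow from monotonicity: the $x$-centered explosion carries an extra $M_{\theta_x\eta}$ chips at $x$ which the origin-explosion never has, so the recursion \eqref{eq:parallel_toppling_recursion} at the site $x$ can push $v^{(x)}_t(x)$ ahead. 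This is exactly the failure mode the paper flags (``it mixes up the background and so cannot be compared directly to the explosion originating from $x$''), and it is not repaired by waiting a bounded time or by taking $h$ large.

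The paper's fix is to abandon direct subadditivity of $T_\eta$ and introduce an auxiliary process, the \emph{last-wave} $u^{(x)}_t$: $\{x\}$-frozen parallel toppling with $u_0^{(x)}=\hat M_\eta(x)\cdot\delta_x$. Because the source is frozen, once the origin-explosion's odometer at $x$ exceeds $\hat M_\eta(x)$ the comparison \eqref{eq:odometer_wave_comp} does give domination for all later times---this is the correct replacement for your relighting lemma. The last-wave arrival times $\hat T$ are not quite subadditive either (again the medium is disturbed), but the paper shows $u^{(x)}_t\le 1$ outside the finite \emph{penultimate cluster} $\mathcal P(x)$, and the modified times $\tilde T(a,b)=\min\{t:u^{(a)}_t(\mathcal P(b))>0\}$ are \emph{exactly} subadditive (Lemma~\ref{lemma:approx_subadditivity}). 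Kingman then gives convergence of $\hat T$. The final, and most delicate, step is Proposition~\ref{prop:approximate_subadditive}: $|T_\eta-\hat T|\le C$. This is not a soft statement; it is proved by an induction establishing the pointwise regularity $|v_t(x\pm e_i\pm e_j)-v_t(x)|\le 1$ of the true odometer, which uses $\eta\ge 2d-2$ in an essential way and is precisely what prevents the thin-front scenario you worried about. Your block/renormalization idea does appear, but earlier and for a different purpose: it controls $|\mathcal P(x)|$ and gives the linear upper bound on $\hat T$, not the subadditivity itself.
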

During our proof of Theorem \ref{theorem:convergence_of_time_constant}, we introduce a quantitative criterion for determining if a sandpile is explosive. The criterion asserts, roughly, that if a sandpile explodes quickly on a finite box, then it must do so on the entire lattice. 
We use this and a coupling with bootstrap percolation to extend the aforementioned Theorem \ref{prop:fey_levine_peres_explode}. 
\begin{theorem} \label{theorem:explosive}
	Suppose $\beta:\Z^d \to \Z$ is drawn from a product measure $\mathbf{P}$.  If $\beta \geq d$ and $\mathbf{P}(\beta(0) = (2 d - 1)) > 0$,
	then $\beta$ is explosive with probability 1.
\end{theorem}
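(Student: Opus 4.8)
The plan is to combine two ingredients. The first is a quantitative explosion criterion, obtained en route to Theorem~\ref{theorem:convergence_of_time_constant}, which I will use as a black box: it asserts that a background is explosive once, for some number $n$ of chips added at the origin, the parallel toppling of the resulting sandpile topples every site of a sufficiently large box $Q_L$ within a number of steps proportional to $L$. The second is the classical theory of bootstrap percolation on $\Z^d$: for $r\le d$, $r$-neighbour bootstrap percolation started from any positive density of occupied sites fills $\Z^d$ almost surely, and a supercritical droplet fills $Q_L$ in $O(L)$ steps.

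I would begin with a monotonicity reduction. Since adding the same chips to a pointwise-larger background cannot help stabilization, it suffices to treat the smallest field allowed by the hypotheses. Put $p':=\mathbf P(\beta(0)=2d-1)>0$ and
\[
\beta'(x) := \begin{cases} 2d-1, & \beta(x)\ge 2d-1, \\ d, & d\le\beta(x)\le 2d-2; \end{cases}
\]
then $\beta'\le\beta$ pointwise and $\beta'$ is a product field equal to $2d-1$ with probability $p'$ and to $d$ otherwise, so it is enough to prove $\beta'$ is explosive almost surely. When $d=1$, $\beta'\equiv 1=2d-1$ and a single chip already sets off an unstoppable cascade, so assume $d\ge 2$.

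The core of the argument is a coupling of the toppling dynamics of $\beta'+n\cdot\delta_0$ with a two-threshold bootstrap automaton. Let $A_t:=\{x:v_t(x)\ge 1\}$ be the set of sites that have toppled by step $t$, and let $\Phi$ denote one synchronous step of the automaton on $\Z^d$ in which $x$ becomes occupied as soon as $\theta(x)$ of its neighbours are occupied, with $\theta(x)=1$ at the $(2d-1)$-sites and $\theta(x)=2d-\beta'(x)=d$ at the $d$-sites. Each toppling of a neighbour adds one chip to $x$ and chips are never removed, so a site with $\theta(x)$ already-toppled neighbours carries at least $\beta'(x)+\theta(x)=2d$ chips and topples at the next parallel step; hence $A_{t+1}\supseteq\Phi(A_t)$. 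Because $v_0\equiv 0$ the automaton must be primed: taking $n$ large and applying the Fey--Levine--Peres growth bound to the sub-background $\beta'\equiv d$ — which is $\le 2d-2$ since $d\ge 2$ and lies pointwise below $\beta'$, so its toppled region sits inside that of $\beta'$ — gives a finite time $T_0=T_0(n)$ with $Q_{cn^{1/d}}\subseteq A_{T_0}$. Now run $\Phi$ from this macroscopic occupied box: since the $(2d-1)$-sites occur with positive density and serve, with threshold $1$, as nucleation points strewn along every flat interface, the classical bootstrap analysis (the effective threshold off the seeds being $d$) shows that on an event of full probability, for every $L$ the automaton — hence the sandpile — has toppled all of $Q_L$ by step $T_0+O(L)$, which is $O(L)$ once $n=n(p')$ is fixed. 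This is precisely the input the explosion criterion requires, so $\beta'$, and therefore $\beta$, is explosive almost surely.

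Granting the explosion criterion, the principal difficulty is this bootstrap step together with its priming. The $(2d-1)$-sites, which in the ordinary picture would by themselves fill $\Z^d$, are not occupied until the toppling front reaches them, so the large-$n$ reduction and the growth bound are genuinely needed to plant a macroscopic occupied box near the origin before the percolation mechanism can engage; one must also confirm that the two-threshold automaton dominates from below a standard monotone $d$-neighbour process with the $(2d-1)$-sites as seeds, that the coupling respects the parallel-toppling clock up to a bounded factor, and that the fill time of $Q_L$ comes out in the exact form — a bounded multiple of $L$ — demanded by the criterion; extracting that fill-time estimate, especially the power of $L$ in dimensions $d\ge 3$ where a single interface bump no longer suffices to advance a face, is the delicate point. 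The deeper ingredient, the criterion itself — that a fast explosion on a finite box forces a genuine explosion on all of $\Z^d$ — is where the finite-scale estimates advertised in the abstract enter, and it is established separately.
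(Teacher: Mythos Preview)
Your black-box ``criterion'' is not what the paper proves, and this is where the argument goes off the rails. As you phrase it --- a single realization topples a sufficiently large $Q_L$ in $O(L)$ steps --- it is false: plant a finite island of $(2d-1)$'s near the origin inside a sea of $(2d-2)$'s and the local fill is as fast as you like while the background is robust. If instead you intend ``for every $L$,'' then the criterion is vacuous (every site eventually topples, and since $\eta\le 2d-1$ forces finite propagation speed this already contradicts stabilizability), and the $O(L)$ time bound plays no role. Either way, the estimate you flag as ``the delicate point'' is not needed --- which is fortunate, because the slab-by-slab growth you sketch gives $O(L^d)$, not $O(L)$, and you do not close that gap.

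The paper's actual criterion (Proposition~\ref{prop:explosion_criteria}) is a statement about the \emph{law}: finite range of dependence, recurrence, and the box-crossing hypothesis (firing one face of a translate of $\mathcal B_k$ makes every site of it fire, with probability $\to 1$ as $k\to\infty$) together imply explosiveness. Your monotone reduction to $\beta'\in\{d,2d-1\}$ and your bootstrap coupling are correct and are exactly the paper's tools, but they are aimed at these local hypotheses rather than at a global fill from the origin. The key device is a dimensional-reduction lemma (Proposition~\ref{prop:dimensional_reduction}): once one face of a $d$-box has fired, the adjacent $(d-1)$-slab evolves as a $(d{-}1)$-sandpile with background shifted down by one. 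Iterating in dimension and side length gives recurrence of the constant background $\eta\equiv d$; combining the same reduction with Schonmann's exponential internal-spanning estimate for $(d{-}1)$-bootstrap at positive density gives box-crossing. No priming step at the origin and no global fill-time bound enter the paper's proof.
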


\subsection{Proof outline}
An exploding sandpile may be thought of as a heterogeneous, discrete reaction-diffusion equation. This perspective leads us to the literature for stochastic homogenization of reaction-diffusion equations \cite{zhang2020long, feldman2019mean, lin2019stochastic, armstrong2018stochastic, caffarelli2014counter, gravner2006random, ishii1999threshold, barles1998new, gravner1996first,gravner1993threshold, willson1978convergence}. These works suggest two methods of proof. The first, which we do not pursue, is {\it half-space propagation}---a limit shape can be completely described by those starting with a half-space initial condition---an early example of this technique appears in \cite{willson1978convergence}. 
Another method is to identify a subadditive quantity similar to the first-passage time, \cite{cox1981some}, which (directly or indirectly) describes the limit shape, then apply the subadditive ergodic theorem.

Our proof of Theorem \ref{theorem:convergence_of_time_constant} follows the second outline,
however, there are several hurdles to overcome. A fundamental one is the nonlinear diffusion of the sandpile. This nonlinearity can cause explosions to propagate irregularly. In fact, an arbitrary exploding sandpile may spread quickly in certain cells, but slowly in others, causing convergence to fail. We demonstrate an explicit family of counterexamples to that effect in Section \ref{sec:counterexample}. A major part of our argument is showing that this irregularity cannot happen if $\eta \geq (2 d - 2)$ and there are `enough' sites with $(2d - 1)$ chips. 

From now until Section \ref{sec:generalization}, take $\eta$ as in the statement of Theorem \ref{theorem:convergence_of_time_constant} and fix $p > 0$. We begin in Section \ref{sec:fast_explosions} by showing that explosions on $\eta$ spread quickly. This is done by establishing a high-probability bound on the `crossing-speed' of $\eta$ in a finite, but large cube and passing to a coarsened lattice. On the coarsened lattice, there is an infinite cluster of `good cubes' upon which explosions are guaranteed to spread quickly. We use this together with
large-deviations for the chemical distance of supercritical Bernoulli percolation to get
uniform, linear bounds on the arrival times. (This portion of the proof shares some similarities with 
Schonmann's argument for bootstrap percolation \cite{schonmann1992behavior}.)

At this stage, if the arrival times, $T_{\eta}$, were subadditive, we could apply the subadditive ergodic theorem
and conclude. However, $T_{\eta}$ is not, in general, subadditive: roughly, when an explosion
started at the origin reaches some site $x$ for the first time, it mixes up the background
and so cannot be compared directly to the explosion originating from $x$. 

We overcome this lack of subadditivity by shifting our focus to a related, but simpler process, 
the {\it last-wave}---an exploding sandpile where the origin is constrained to topple a fixed number of times.  In Section \ref{sec:last_wave}, we use the established regularity of explosions to show that the last-wave can be approximated by a quantity which is exactly subadditive, and hence converges. This can also be viewed as a shape theorem for a bootstrap percolation type process. 

The proof of Theorem \ref{theorem:convergence_of_time_constant} is completed in Section \ref{sec:proof_of_theorem} where we show that the last-wave is an approximation to the expanding front of an exploding sandpile. The argument for this is a deterministic comparison which requires $\eta \geq (2d - 2)$. Then, in Section \ref{sec:generalization}, we generalize Theorem \ref{theorem:convergence_of_time_constant} 
by presenting sufficient hypotheses under which a limit shape exists. We demonstrate some need of these hypotheses
by constructing a family of (random and deterministic) exploding sandpiles which fail to have a limit shape in Section \ref{sec:counterexample}. We conclude in Section \ref{sec:criteria} with a proof of Theorem \ref{theorem:explosive}. There we indicate explicit criteria 
for determining if a sandpile is explosive. 

\subsection*{Acknowledgments}
Thank you to Charles K. Smart for motivating, helpful discussions 
during this project. Thank you to Lionel Levine for several
inspiring conversations and for suggesting this question.  Thank you to Dylan Airey for asking if exploding sandpiles have a limit shape.

\subsection*{Code}
Julia code which can compute the figures in this article is included in the arXiv upload and may be freely used
and modified.

\subsection*{Notation and conventions} 
\begin{itemize}
	\item Functions on $\Z^d$ are extended via nearest-neighbor interpolation to $\R^d$.  
	That is, if $f$ is a function on $\Z^d$, then for $x \in \R^d$, we define $f(x)$ to be $f([x])$ where $[x]$ is the nearest (breaking ties by picking the coordinate-wise minimum) point in $\Z^d$ to x. 
	\item $d$ will always refer to the dimension of the underlying space.
	\item $e_1, \ldots, e_d$ are the $d$ unit directions in $\Z^d$.
	\item $x_i = x \cdot e_i$ is the $i$th coordinate of vector $x$ and $\mathbf{x}_{d-i} = (x_{i+1}, \ldots, x_d)$.
	\item $|x|$ refers to the Manhattan norm and $|x|_{\infty}$ the $\ell_\infty$ norm.
	\item $y \sim x$ if $|y-x| = 1$. 
	\item For $x,y \in \R^d$, 
	\[
	[x,y] = [x_1, y_1]  \times \cdots \times [x_d, y_d]
	\]
	and for $a,b \in \bar{\R}$
	\[
	[a,b]^d = [a,b] \times \cdots \times [a,b]
	\]
	and for $x \in \R^d$ and $b \in \bar{\R}$, 
	\[
	[x,b]^d = [x_1,b] \times \cdots \times [x_d, b]
	\]
	and vice-versa. 
	\item The symbol $\cdot$ will sometimes be used (for visual clarity) to denote scalar multiplication. 
	\item Scalar operations on vectors/functions/sets are interpreted pointwise.
	\item  For a function  $f:A \to \R$ and a subset of its domain $\mathcal{S} \subset A$, we denote the restriction of $f$ to $\mathcal{S}$ by  $f|_{\mathcal{S}}$. 
	\item $|\cdot|$ is either the counting measure or Lebesgue measure depending on the input.
	\item For $A \subset \Z^d$, $A^c := \{ x \in \Z^d : x \not \in A\}$, $\partial A := \{x \in A^c : \exists y \in A, x \sim y\}$,
	and $\bar{A} := A \cup \partial A$. The inner boundary of $A$ is denoted by $\partial^{\circ} A  := \{ x \in A : 
	\exists y \in A^c, x \sim y\}$. 
	\item $C,c$, are positive constants which may change from line to line. Dependence is indicated by, for example, $C_d$ or $C_d(\eta)$.
	\item  The act of {\it firing} or {\it toppling} a site, $x$, removes $2 d$ chips from $x$ and adds one chip to each $y \sim x$.
	\item  $\eta \sim \mbox{Bernoulli}(a, b, p)$ is shorthand for: $\eta: \Z^d \to \Z$ is drawn from a product measure 
	$\mathbf{P}$ with $\mathbf{P}(\eta(0) =  a) = p$ and $\mathbf{P}(\eta(0) = b) = (1-p)$.
\end{itemize}

\section{Regularity of explosions} \label{sec:fast_explosions}
We use $\eta \geq (2 d- 2)$ together with the i.i.d. assumption to establish almost sure regularity of explosions. 
The main result of this section is a quantification of \cite{fey2010growth}'s Theorem \ref{prop:fey_levine_peres_explode} recalled above. 
The method is a static renormalization (see Chapter 7 in \cite{grimmett2013percolation}) inspired by Schonmann's proof for bootstrap percolation \cite{schonmann1992behavior}.

\subsection{Parallel toppling preliminaries}
Before proceeding, we mention some basic properties of parallel toppling which we use below. Recall that $\{v_t\}_{t \geq 1}$ and $\{s_t\}_{t \geq 1}$ are the infinite sequence of parallel toppling odometers and sandpiles for initial conditions $v_0 = 0$ and  $s_0 = \eta + M_{\eta}  \delta_0$.
An induction argument ((4.4) in \cite{babai2007sandpile}) shows
\begin{equation} \label{eq:parallel_toppling_recursion}
\begin{aligned}
v_{t+1}(x) &= \min \{ \lfloor \frac{ s_0(x) + \sum_{y \sim x} v_{t}(y)}{2 d} \rfloor, v_t(x) + 1\} \\
s_{t+1}(x) &= s_0(x) + \Delta v_{t+1}(x).
\end{aligned}
\end{equation}
Another induction shows that when $s_0 \leq 2 \cdot (2 d) - 1$, the minimum over the $v_t(x) + 1$ term in  \eqref{eq:parallel_toppling_recursion}
is unnecessary. 

In the sequel we also consider a version of parallel toppling $\{w_t\}_{t \geq 0}$ where 
the initial value $w_0$ is given and the odometer on some set, $\mathcal{S}$, (the complement of a cube, the origin) is `frozen' at $w_0$ 
and the initial sandpile is such that the topplings $w_0$ were performed: $s_0' = s_0 + \Delta w_0$ and, 
\begin{equation} \label{eq:frozen_parallel_toppling}
\begin{aligned}
w_{t+1}(x) &=
\begin{cases}
w_{t}(x) + 1 \{ s_{t}'(x) \geq 2 d \}  &\mbox{if $x \not \in \mathcal{S}$} \\
w_0(x)  &\mbox{if $x \in \mathcal{S}$}
\end{cases} \\
s_{t+1}' &= s_{t}' + \Delta (w_{t+1} - w_{t}).
\end{aligned} 
\end{equation}
We call this {\it $\mathcal{S}$-frozen} parallel toppling. (Recently, and after this paper was written, \cite{goles2021freezing} was posted---therein so-called `freezing sandpiles' are studied in the context of computational complexity.) 
If $s_0 + \Delta w_0 \leq 2 \cdot (2 d)-1$ on $\mathcal{S}^c$, then as above:
\begin{equation} \label{eq:frozen_parallel_toppling_recurse}
\begin{aligned}
w_{t+1}(x) &=   \lfloor \frac{ s_0(x) + \sum_{y \sim x} w_{t}(y)}{2 d} \rfloor  \qquad \mbox{for $x \not \in \mathcal{S}$} \\
s_{t+1}'(x) &= s_0(x) + \Delta (w_{t+1} - w_{0})(x).
\end{aligned}
\end{equation}
Also, if $s_0 \leq (2 d-1)$ and $w_0 \equiv 0$ on $\mathcal{S}^c$, then $w_t |_{\mathcal{S}^c} \leq \max_{x \in \partial^{\circ} \mathcal{S}} w_0(x)$ for all $t \geq 0$.
The definitions allow us to compare the two versions of parallel toppling,

\begin{equation} \label{eq:odometer_wave_comp}
\begin{aligned}
v_{t + t_0} \geq w_t  \qquad &\mbox{where $t_0:=  \min\{ t \geq 0 : v_{t} \geq w_0 \}$} \\
w_t \geq v_t  \qquad &\mbox{ if $w_0 \geq v_0 \equiv 0$ and $w_0|_{\mathcal{S}} \geq \sup_{t} (v_t|_{\mathcal{S}})$}.
\end{aligned}
\end{equation}

\subsection{Crossing speeds}
To provide a global upper bound on the arrival times, $\mathcal{T}_{\eta}(x)$, we show a local upper bound. In particular 
we study the following `cell problem', a term from homogenization denoting 
a simple problem which describes the local behavior of a more complicated one. 

We consider sandpile dynamics on a box of side length $k$, $Q_k := \{x \in \Z^d : 1 \leq x \leq k\}$. For a point $z \in \bar{Q}_k$ and direction $1 \leq i \leq d$, denote the line passing from one side of the
box to the other 
\begin{equation}
\mathcal{L}_k^{(i,z)} := \bigcup_{j=1,\ldots,k} (z_1, \ldots, z_{i-1}, j, z_{i+1}, \ldots, z_d).
\end{equation}
Let $w_t$ be the parallel toppling odometer for $\{Q_k^c \cup \mathcal{L}_k^{(i,z)}\}$-frozen parallel toppling (defined in \eqref{eq:frozen_parallel_toppling})
with initial conditions $w_0(x) = 1\{x \in \mathcal{L}_k^{(i,z)}\}$, $s_0' = \eta$. Denote the {\it crossing time}
\begin{equation}
\mathfrak{C}_{k}^{(i,z)} := 
\begin{cases}
\min \{ t \geq 1 : w_t|_{Q_k} = 1 \} \qquad &\mbox{if $w_{\infty} |_{Q_k} = 1$}  \\
\infty \qquad &\mbox{otherwise}.
\end{cases}
\end{equation}
We show that if $k$ is sufficiently large, the crossing time is bounded with high probability.

\begin{prop} \label{prop:crossing_time}
	For every $\delta > 0$, there is a $k$ so that  
	\begin{equation} \label{eq:crossing_time_bound}
	\max_{i,z} \mathfrak{C}_{k}^{(i,z)} \leq k^d
	\end{equation}
	with probability at least $(1-\delta)$. 
\end{prop}
\begin{proof}
	For each $k \geq 1$, we construct an event for which \eqref{eq:crossing_time_bound} occurs 
	with probability approaching 1 in $k$.  By Harris' inequality (see, \eg, \cite{alon2016probabilistic}) and symmetry,
	it suffices to show \eqref{eq:crossing_time_bound} for lines in one direction, say $i=1$.
	Let $z \in \bar{Q}_k$ be given. 
	
	Write $\mathfrak{C}_k =  \mathfrak{C}_k^{(1,z)}$.  We show that if all lines, $\mathcal{L}_k^{(1,y)}$, $y \in Q_k$, contain at least one 
	site with $(2 d - 1)$ chips, then every site in the cube eventually topples.  Denote the event upon which this happens by, 
	\begin{equation}
	\Omega' := \bigcap_{y \in Q_k} \Omega_y := \bigcap_{y \in Q_k} \{\eta:\Z^d \to \Z : \eta(x) = (2 d-1) \mbox{ for some $x \in \mathcal{L}_k^{(1,y)}$} \}.
	\end{equation}
	Recall $p > 0$ is the probability of a site having $(2 d-1)$ chips. Fix $0 < \epsilon < p$, and note, by Hoeffding's inequality (see, \eg, \cite{alon2016probabilistic}), 
	for each $y \in Q_k$, 
	\[
	\mathbf{P}( \sum_{ x \in \mathcal{L}_k^{(1,y)} }  1(\eta(x) = 2 d -1) \leq (p-\epsilon) k) \leq  \exp(-2 \epsilon^2 k).
	\]
	Therefore, by the union bound, (deleting duplicates),
	\[
	\mathbf{P}(\Omega'^c) \leq k^{d-1} \mathbf{P}(\Omega_1'^c) \leq k^{d-1} \exp(-2 \epsilon^2 k),
	\]
	and so for every $\delta > 0$, there is $k$ sufficiently large so that 
	$P(\Omega') \geq 1 - \delta$.
	
	It remains to check that for $\eta \in \Omega'$, $\mathfrak{C}_k \leq k^d$. We do so by constructing a toppling procedure which is dominated by $w_t$. After firing $\mathcal{L}_k^{(1,z)}$, all sites in neighboring lines, $\mathcal{L}_k^{(1,y)}$, $\mathbf{y}_{d-1} \sim \mathbf{z}_{d-1}$ have at least $(2 d - 1)$ chips. In fact, since $\eta \in \Omega'$, at least one site in each neighbor 
	$\mathcal{L}_k^{(1,y)} \subset Q_k$ has $2 d $ chips, causing all sites in the line to topple. Iterating shows this procedure will terminate with every site in $Q_k$ toppling in at most $k^{d}$ steps. 
	
\end{proof}

\subsection{A static renormalization scheme}
We exhibit a coarsening of the lattice upon which explosions are guaranteed to spread quickly. A cube, $Q_k$, is {\it good} if 
$\max_{i,z} \mathfrak{C}_{k}^{(i,z)} \leq k^d$. For each $i \in \Z^d$, let 
\begin{equation} 
Q_k(i) := Q_k + i \cdot k.
\end{equation}
The cubes $\{Q_k(i)\}_{i \in \Z^d}$ define a {\it macroscopic lattice} with edge set $\{ (Q_k(i), Q_k(j)), |j-i| = 1\}$. 
For $k$ sufficiently large, Proposition \ref{prop:crossing_time} implies that the set of good cubes dominates a high density independent site percolation process on the macroscopic lattice.  This together with large deviations bounds for supercritical percolation \cite{antal1996chemical, garet2007large} imply the following. (See, for example, Section 5 in \cite{mathieu2008quenched} for an explicit proof.)

\begin{prop} \label{prop:renormalization_properties}
	For fixed $k$ large enough, there are constants $c, C$ so that the following hold
	on an event of probability 1. 
	\begin{enumerate}
		\item There is a unique infinite cluster $\mathcal{C}_{\infty}$ of good cubes
		on the macroscopic lattice $\{Q_k(i)\}_{i \in \Z^d}$. 
		\item	There is $n_0$ so that for $n \geq n_0$, any connected
		component of $\mathcal{C}_{\infty}^c$ that intersects $[-n,n]^d$ has volume 
		smaller than $(\log n)^{5/2}$.
		\item   There is $n_0$ so that for $n \geq n_0$, for any $x,y \in \mathcal{C}_{\infty}$ with $|x| \leq n$ and $|x-y| \geq (\log n)^2$, 
		\[
		c |x-y| \leq d(x,y) \leq C |x-y|,  
		\] 
		where $d$ is the chemical (graph) distance on $\mathcal{C}_{\infty}$.
	\end{enumerate}
\end{prop}

The definition of $\mathcal{C}_{\infty}$ ensures that once $Q_k(i) \in \mathcal{C}_{\infty}$ is {\it overlapped}
by the support of the odometer --- $Q_k(i) \cap 1 \{ v_t > 0 \}$ contains a straight line --- an explosion will occur. This together 
with Proposition \ref{prop:renormalization_properties} controls the speed at which the explosion propagates.
We show next that the explosion spreading in $\mathcal{C}_{\infty}$ also quickly fills holes in the cluster.

\subsection{A path-filling property}
For a set of points $A \subset \Z^d$, 
let $m_i := \min_{z \in A} z_i$ and $M_i := \max_{z \in A} z_i$ for $i = 1, \ldots, d$. 
Denote the bounding rectangle of $A$ as 
\begin{equation}
\br(A) := \{ z \in \Z^d  : \mbox{ $m \leq z \leq M$} \}.
\end{equation}
We show, using $\eta \geq (2 d - 2)$, that if the odometer is strictly positive
on a path of points at some time then eventually the odometer is strictly positive on the
bounding rectangle of that path. Essentially, if not then the support of the terminal odometer must have a corner, \ie, an untoppled site with two neighbors which have toppled, a contradiction. Our proof uses this idea together 
with a slightly technical induction (which, it seems, we cannot avoid as the claim is needed in all dimensions).
See Figure \ref{fig:fill_in} for an illustration of this result. 

\begin{figure}[bt]
	\includegraphics[width=0.25\textwidth]{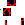}
	\caption{Terminal $A$-frozen parallel toppling odometer, $w_{\infty}$, on $\Z^2$ for initial conditions $w_0 = 1\{x \in A\}$ and $s_0' = 2$. The set $A$ is denoted by red pixels and black pixels are sites which eventually topple.}
	\label{fig:fill_in}
\end{figure}

\begin{lemma}\label{lemma:local_convexity}
	Let $A := \{z^{(i)}\}$ be a finite path $z^{(i)} \sim z^{(i+1)}$.
	Let $w_t$ denote $A$-frozen parallel toppling with initial conditions 
	$w_0|_{A} = 1$, $s_0' = \eta$. Then, $w_t|_{\br(A)} = 1$, for all $t \geq |\br(A)|$.   
\end{lemma}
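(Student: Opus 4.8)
The plan is to reduce the statement to a deterministic fact about two‑neighbour bootstrap percolation and then count toppling steps. Since $\eta\sim\mathrm{Bernoulli}(2d-2,2d-1,p)$ satisfies $2d-2\le\eta\le 2d-1$, the recursion \eqref{eq:frozen_parallel_toppling_recurse} applies and gives $0\le w_t\le1$; moreover $w_t$ dominates the $A$‑frozen odometer $\underline w_t$ obtained by replacing $\eta$ with the constant $2d-2$ off $A$. Writing $S_t:=\{x:\underline w_t(x)=1\}$, one reads off from \eqref{eq:frozen_parallel_toppling_recurse} that $S_0=A$, that $\{S_t\}$ is nondecreasing, and that for $x\notin A$ one has $x\in S_{t+1}$ exactly when $x$ has at least two neighbours in $S_t$: that is, $\{S_t\}$ is two‑neighbour bootstrap percolation seeded by $A$. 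It therefore suffices to prove $S_t=\br(A)$ for every $t\ge|\br(A)|$.

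I would hang the argument on two observations. First, $\br(A)$ is itself two‑neighbour stable --- if $x\notin\br(A)$ then $x$ violates $\br(A)$ in some coordinate $i$, and then only $x\mp e_i$ can possibly lie in $\br(A)$ --- so $S_t\subseteq\br(A)$ for all $t$, the increasing chain $\{S_t\}$ stabilises at some time $\tau$, and $S_\tau$ is a two‑neighbour stable set with $A\subseteq S_\tau\subseteq\br(A)$. Second, and this is the crux:
\begin{equation}\label{eq:stable}
\text{if }B\subseteq\Z^d\text{ is two-neighbour stable and contains a connected set }A,\text{ then }\br(A)\subseteq B.
\end{equation}
Granting \eqref{eq:stable}, $\br(A)\subseteq S_\tau\subseteq\br(A)$, so $S_\tau=\br(A)$; and since $S_t$ strictly grows on $\{0,\dots,\tau\}$ (if $S_{t+1}=S_t$ then $S_t$ is already stable, so $\tau\le t$) we get $|\br(A)|=|S_\tau|\ge|S_0|+\tau=|A|+\tau$, hence $\tau\le|\br(A)|-|A|<|\br(A)|$. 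Thus $S_t=\br(A)$ for all $t\ge|\br(A)|$, and there $w_t\ge\underline w_t=1$ while $w_t\le1$ everywhere, giving $w_t(\br(A))=1$.

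For \eqref{eq:stable} I would induct on $d$. If $\br(A)$ is degenerate in some coordinate then $A$ lies in the corresponding hyperplane $\Pi$, and $B\cap\Pi$ is two‑neighbour stable inside $\Pi\cong\Z^{d-1}$, so the inductive hypothesis applies; hence assume $\br(A)=\prod_i\{0,\dots,M_i\}$ with every $M_i\ge1$, and suppose for contradiction that $D:=\br(A)\setminus B\neq\emptyset$. Let $z$ be the lexicographically least point of $D$. For each coordinate $i$ with $z_i\ge1$ the point $z-e_i$ lies in $\br(A)$ and is lexicographically smaller, hence lies in $B$; so if $z$ had two positive coordinates it would have two neighbours in $B$, contradicting stability of $B$ at $z\notin B$. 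Thus $z$ sits on a coordinate edge $E_j=\{te_j:0\le t\le M_j\}$ of the box. Next, a layer‑propagation argument removes almost all remaining configurations: if in some direction the outermost defect layer is interior, say $T:=\max_{z'\in D}z'_d<M_d$, then for each $x$ in the layer $L_T=\{x_d=T\}\cap\br(A)$ the neighbour $x+e_d$ lies above all of $D$ and hence in $B$, so by stability every $x\in L_T\cap D$ has all of its within‑layer neighbours in $D$; since $L_T$ is connected and meets $D$, this forces $L_T\subseteq D$, contradicting the fact that the connected set $A$ --- whose $x_d$‑coordinates fill $[0,M_d]$ --- must meet $L_T$. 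Running this, and its mirror image, in every coordinate direction reduces us to the single residual configuration in which $D$ already touches all $2d$ faces of $\br(A)$.

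\textbf{The main obstacle} is exactly this residual case. Here the clean lexicographic and layer‑propagation arguments have been exhausted, and ruling out a defect set that spans the whole bounding box is where one genuinely needs to carry the induction on dimension through: one promotes the edge defect $z$ into a full defective layer adjacent to a layer that connectivity of $A$ forces into $B$, or applies the $(d-1)$‑dimensional case of \eqref{eq:stable} to a face or slab on which $B$ restricts to a two‑neighbour stable set containing a connected spanning subset --- the bookkeeping needed to always produce such a subset (that is, to locate the promised ``corner'') is the ``slightly technical induction'' the lemma statement refers to. Everything else --- the reduction to bootstrap percolation, the stability of $\br(A)$, and the step count yielding the sharp bound $|\br(A)|$ --- is routine.
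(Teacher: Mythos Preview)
Your reformulation is correct and attractive: once you reduce to $\eta\equiv 2d-2$ the dynamics are exactly two-neighbour bootstrap, the claim becomes ``the closure of a connected set is its bounding box'', and the time bound drops out as you say. The lex-min and layer arguments are also correct and do reduce you to a defect set $D$ touching every face of $\br(A)$. But this residual case is not bookkeeping---it is the whole lemma, and your two suggested completions do not close. For the face-restriction route, the $(d-1)$-dimensional inductive hypothesis needs a \emph{connected} subset of $B\cap F$ whose bounding box is all of $F$; you have observed that $B\cap F$ is stable in $F$, but $A\cap F$ may be a single point (the path can touch the top face once and retreat), and nothing in your framework manufactures a larger connected spanning subset inside $B\cap F$. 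For the ``promote $z$ to a full defective layer'' route, even if you propagate to show some $(d-2)$-dimensional slice $L\subseteq D$, you only get a contradiction when $A$ meets $L$; but connectivity of $A$ only guarantees that $A$ meets each hyperplane $\{x_i=t\}$, not intersections of two of them, so the iterated layer argument stalls exactly here.

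The paper's proof is different in kind: constructive rather than obstruction-theoretic. It first proves directly, by induction on the number of nonzero side lengths, that any \emph{monotone} (``cubic'') lattice path fills its bounding box (Cases~1--2). It then (Case~3) reduces the general statement to the two-point version---a path from $p$ to $q$ forces $\br(\{p,q\})$ to fire---and proves this by decomposing the path into monotone pieces and exhibiting an explicit three-leg re-routing through already-filled sub-boxes that merges two consecutive monotone pieces into one. That merging construction carries the genuine combinatorial content; it is what your stability framework is missing, and it is more than bookkeeping.
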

\begin{proof}
	By monotonicity of parallel toppling, we may take $\eta = (2 d - 2)$. Moreover, 
	it suffices to show that every site in $\br(A)$ eventually topples, as if no site topples at time $t$, then no site topples at time $(t+1)$.
	
	We say $A$ contains a $( \pm i, \pm j)$ turn if $z^{(m)} = z^{(m-1)} \pm  e_i$ and $z^{(m+1)} = z^{(m)} \pm e_j$
	for some $i \not = j$ where $z^{(m-1)} \sim z^{(m)} \sim z^{(m+1)}$ are in $A$.
	If $A$ does not contain a turn, then $\br(A) = A$. Hence, we may suppose it contains at least one turn.
	
	\subsubsection*{Case 1 - one-turn path}
	By shifting coordinates, we may suppose $A$ contains only a $(1,2)$ turn and that 
	\begin{equation}
	A = \{ 0, e_1, \ldots, k_1 e_1, k_1 e_1 + e_2, \ldots, k_1 e_1 + k_2 e_2 \}
	\end{equation}
	for $k_1 \geq k_2$.	We induct on $k_2$. If $k_2 = 1$, then after firing every site in $A$, 
	all sites in $A + e_2$ get one chip, while the corner site,
	$((k_1 - 1) e_1 + e_2)$ gets 2 chips. Since $\eta = (2 d- 2)$, that corner becomes unstable and fires, 
	causing all of its neighbors to the left, $((k_1 - 2) e_1 + e_2) \sim ((k_1 - 3) e_1 + e_2) \sim \cdots \sim e_2$ to fire. Continuing the induction shows that every site in $\br(A) = \{ x \in \Z^d : 0 \leq x \leq (k_1 e_1 + k_2 e_2)\}$ eventually fires. 
	
	\subsubsection*{Case 2 - cubic path} 
	We call $A$ a {\it cubic} path if, after an isometry, 
	\begin{equation}
	A = \{ 0, e_1, \ldots, k_1 e_1, k_1 e_1 + e_2, \ldots, \sum_{i=1}^d k_i e_i \}
	\end{equation}
	for $k_1 \geq \cdots \geq k_d \geq 0$. Let $d_0 := \max \{ i \leq d : k_i > 0 \}$. We induct on $d_0$, the base case $d_0 = 2$ established in Case 1. 
	For notational convenience, suppose the claim holds for $d_0 = (d-1)$ and we verify it for $d_0 = d$.

	Consider the $(d-1)$-turn subpaths, 
	\begin{equation}
	\begin{aligned}
	\mathcal{P}_1 &:= \{0, e_1, \ldots, \sum_{i=1}^{d-1} k_i e_i \} \\
	\mathcal{P}_2 &:= \{k_1 e_1, k_1 e_1 + e_2 , \ldots, \sum_{i=1}^{d} k_i e_i \}.
	\end{aligned}
	\end{equation}
	By the inductive hypothesis, after $\mathcal{P}_i$ fire,  
	both $(d-1)$-dimensional faces, 
	\begin{equation}
	\begin{aligned}
	\mathcal{F}_1 &:= \{x \in \Z^d : 0 \leq x \leq \sum_{i=1}^{d-1} k_i e_i \} \\
	\mathcal{F}_2 &:= \{x \in \Z^d : k_1 e_1 \leq x \leq \sum_{i=1}^{d} k_i e_i \}
	\end{aligned}
	\end{equation}
	fire. We then `fill in' the cube by identifying newly fired $(d-1)$-turn paths:
	\begin{equation}
	\begin{aligned}
	\mathcal{P}'_j := \{ & j  e_2, j e_2 + e_1, \ldots, j e_2 + k_1 e_1,  \\ 
	& j e_2 + k_1 e_1 + e_3, \ldots, j e_2 + k_1 e_1 + k_3 e_3, \\
	& \cdots \\
	&j e_2 + \sum_{i\not=2} k_i e_i \} 
	\end{aligned}
	\end{equation}
	which are in $\mathcal{F}_1 \cup \mathcal{F}_2$ for $j=0, \ldots, k_2$. By the inductive hypothesis, the firing of each $\mathcal{P}'_j$ makes every $(d-1)$-dimensional layer,
	\[
	\mathcal{L}_j := \{x \in \Z^d : j e_2 \leq x \leq j e_2 + \sum_{i\not=2} k_i e_i \},
	\]
	fire and $\br(A) = \cup_{j=0}^{k_2} \mathcal{L}_j$.

	\subsubsection*{Case 3 - general path}
	
	It suffices to show that if there is a path of firings between any two distinct points $x,y$, 
	then $\br(\{x,y\})$ eventually fires. Before showing this, we suppose it were true and demonstrate sufficiency.
	Take $q \in \br(A)$ and observe by definition there are points $(z^{(1)}, Z^{(1)}), \ldots, (z^{(d)}, Z^{(d)})$ in $A$
	with $z_i^{(i)} \leq q_i \leq Z_i^{(i)}$.  Then,
	\[
	q^{(1)} := (q_1, \mathbf{q}_{d-1}') \in \br(\{z^{(1)}, Z^{(1)}\})
	\]
	for some $(d-1)$-vector $\mathbf{q}_{d-1}'$. Continue and let 
	\[
	q^{(2)} := \begin{cases}
	(q_1, q_2, \mathbf{q}_{d-2}'')  \in \br(\{q^{(1)}, z^{(2)}\}) &\mbox{ if $q^{(1)}_2 \geq q_2$}\\
	(q_1,q_2, \mathbf{q}_{d-2}''')  \in \br(\{q^{(1)}, Z^{(2)}\}) &\mbox{ otherwise }\\
	\end{cases}	
	\]
	for some $(d-2)$-vectors $\mathbf{q}_{d-2}'', \mathbf{q}_{d-2}'''$. After iterating, we find $q^{(d)} = q$, which shows that eventually $q$ will fire.

	Now fix two points $x,y \in A$ and decompose
	a path between them into a sequence of cubic paths
	\[
	\mathcal{P}^{(1)} := \bigcup_{i=1}^m \mathcal{P}^{(1)}_i, 
	\]
	where $\mathcal{P}^{(1)}_i := \{p_{i-1}, \ldots, p_i \}$ is cubic and $p_0 := x$ and $p_{m} := y$. (This can be done by, for example, 
	starting at x and exploring the path but cutting whenever the cubic condition is violated.) Case 2 shows that eventually every site in $\mathcal{A}^{(1)} := \bigcup_{i=1}^m \br(\mathcal{P}^{(1)}_i)$ will fire. 
	If $m = 1$, we are done, otherwise we construct a new cubic path from $p_0$ to $p_2$ 
	passing through $\mathcal{A}^{(1)}$. Once we have shown this, we
	iterate to conclude.

	Suppose $p_0 = 0$, $p_{1} = \sum_{j=1}^d k_j e_j$, and 
	\[
	p_{2} = \sum_{j=1}^{d_1} (k_j-k_j') e_j  + \sum_{j=(d_1+1)}^{d_2} (k_j - k_j') e_j + \sum_{j=(d_2+1)}^{d} (k_j+k_j') e_j,
	\]
	for some $1 \leq d_1 \leq d_2 \leq d$ and $k_j,k_j' \geq 1$ where $(k_j - k_j') < 0$ for $j \leq d_1$ and $(k_j - k_j') \geq 0$
	for $(d_1 + 1) \leq j \leq d_2$. After this coordinate change, it suffices to exhibit a path from $p_0$ to $p_2$ with differences constrained to be $-e_j$ for $j = 1, \ldots, d_1$ and 
	$+e_j$ for $j = (d_1 + 1), \ldots, d$.
	
	There is a cubic path (only positive moves) from $p_0$ to
	\[
	w_1 := \sum_{j=(d_1+1)}^{d_2} (k_j - k_j') e_j + \sum_{j=(d_2+1)}^{d} (k_j) e_j
	\]
	contained within $\br(\{p_0, p_1\})$ as $p_0 = 0 \leq w_1 \leq p_1$. Then, since $w_1 \in \br(\{p_1,p_2\})$ there is a cubic path (only positive moves) from $w_1$ to 
	\[
	w_2 := w_1 + \sum_{j=(d_2+1)}^{d} k_j'e_j
	\]
	contained in $\br(\{p_1, p_2\})$ and similarly there is a cubic path (only negative moves) from $w_2$ to 
	\[
	p_2 = w_2  + \sum_{j=1}^{d_1} (k_j-k_j') e_j.
	\]
	Our new cubic path is the concatenation of these three paths: $p_0 \to w_1  \to w_2 \to p_2$. 
\end{proof}

\section{The last-wave}\label{sec:last_wave}
In this section we study a simplified parallel toppling procedure closely related to bootstrap percolation (see Section \ref{sec:criteria}
for an explicit connection, we do not utilize the coupling here). This simplified process has an inherent subadditive structure
which allows us to prove convergence using the subadditive ergodic theorem. In the next section we show that this process is a good approximation to an exploding sandpile.

\subsection{\texorpdfstring{The $n$-wave process}{The n-wave process}} \label{subsec:n_wave}
Fix $n \geq 1$, $z \in \Z^d$, and consider the {\it $n$-wave} for $\eta$ starting at $z$,
\begin{equation} \label{eq:n_wave}
\begin{aligned}
u_0^{(n,z)} &:= n  \delta_z \\
u_{t+1}^{(n,z)}(x) &:= \lfloor \frac{ \sum_{y \sim x}  u_{t}^{(n,z)}(y) + \eta(x)}{2 d} \rfloor \mbox{ for $x \not= z$}.
\end{aligned}
\end{equation}
Note that there is intentionally no minimum in \eqref{eq:n_wave} with $u^{(n,z)}_t(x) + 1$
and it is possible that $u_{t+1}^{(n,z)}(x) > u_t^{(n,z)}(x) + 1$.
In particular, the $n$-wave started at $z$ dominates the $\{z\}$-frozen parallel toppling process defined in \eqref{eq:frozen_parallel_toppling} with the same initial conditions but may not coincide with it.

Overloading terminology, 
the $n$-wave is {\it stabilizable} if there is $T < \infty$
so that $u^{(n,z)}_{t} = u^{(n,z)}_{T}$ for all $t \geq T$. Let 
\begin{equation}
\hat{M}_{\eta}(z) := \min \{ n \geq 1 : \mbox{the $n$-wave for $\eta$ starting at $z$ is not stabilizable}\}.
\end{equation}

We write $u_t^{(z)} := u_t^{(\hat{M}_{\eta}(z),z)}$ for the $\hat{M}_{\eta}(z)$-wave starting at $z$ and call this the {\it last-wave}.  We also consider
the {\it penultimate-wave} starting at $z$ as the terminal odometer for the $(\hat{M}_{\eta}(z) - 1)$-wave, 
$\tilde{u}^{(z)}$, defined to be the zero function when $\hat{M}_{\eta}(z) = 1$. The set of sites touched by the penultimate wave is its {\it penultimate-cluster},
\begin{equation} \label{eq:penultimate_cluster}
\mathcal{P}(z) := \{z\} \cup \{ x \in \Z^d : \mbox{ there is $y \sim x$ with $\tilde{u}^{(z)}(y) > 0 $}\}
\end{equation}
(we included the point, $\{z\}$, as $\hat{M}_{\eta}(z)$ may be 1). When $z$ is the origin, we omit the superscripts.

The {\it arrival time} for the last-wave starting at site $y$ to site $x$ is
\begin{equation} \label{eq:lastwave_fpp}
\hat{T}(y,x) := \min\{ t \geq 1 : u_{t}^{(y)}(x) > 0\}
\end{equation}
and the {\it penultimate-cluster arrival time} is 
\begin{equation} \label{eq:lastwave_modified_fpp}
\tilde{T}(y,x) := \min\{ t \geq 1 : u_{t}^{(y)}|_{\mathcal{P}(x)} > 0\}. 
\end{equation}
Note that the inequality in \eqref{eq:lastwave_modified_fpp} is strict --- $\tilde{T}(y,x)$ is the first time
the support of the last wave started at $y$ contains the entire penultimate wave of $x$, $\mathcal{P}(x)$. 
We write $\hat{T}(x) := \hat{T}(0,x)$ and $\tilde{T}(x) := \tilde{T}(0,x)$.
(The choice of the same letter $T$ for all arrival times is intentional --- we will see they are asymptotically close.)

\subsection{Basic properties of the last-wave} \label{sec:last_wave_basic_properties}
We derive some basic properties of the last-wave. Throughout this section and the next, let $\eta$ be drawn from the event of probability
1 in Proposition \ref{prop:renormalization_properties} and let $k$ be the (deterministic but large) side length of a good cube.  The following is a consequence of Theorem 4.1 in \cite{fey2008limiting}.
\begin{lemma} \label{lemma:square_lower_bound}
	There is a constant $C := C_d$ so that for all $n \geq 1$, the support of the $(C n^d)$-wave contains $[-n,n]^d$. 
\end{lemma}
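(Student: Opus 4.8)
The plan is to reduce the statement to a known result of Fey–Levine–Peres (Theorem 4.1 in \cite{fey2008limiting}), which concerns the classical (sequential or parallel) abelian sandpile with $n$ chips added at the origin on a background $\eta \leq 2d-2$, and says that the set of toppled sites is not too small — in particular, the odometer for $n\cdot\delta_0$ dominates $1\{x \in [-cR, cR]^d\}$ whenever $n \geq C R^d$, for suitable dimensional constants. So first I would recall precisely what that theorem gives: a lower bound on the region that topples for $n\cdot\delta_0$ on the $(2d-2)$-background (or, more conveniently, on the all-$0$ or all-$(2d-2)$ background), of the form that $CR^d$ chips guarantee every site in a cube of side $\sim R$ fires at least once.

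The main point is then a \emph{monotonicity/domination} comparison between the $n$-wave process $u_t^{(z)}$ defined in \eqref{eq:n_wave} — a $\{z\}$-frozen parallel toppling with $n$ chips frozen at $z$ and background $\eta$ — and ordinary parallel toppling of $n\cdot\delta_z + \eta$. By translation invariance take $z = 0$. First, by the monotonicity of parallel toppling in the background, I can replace $\eta$ by the constant background $2d-2$ (this only decreases the odometer, so a lower bound survives; this is exactly the move used at the start of the proof of Lemma \ref{lemma:local_convexity}). Second, I would compare the frozen process to the unfrozen one using \eqref{eq:odometer_wave_comp}: freezing the odometer at $z$ at the value $n$ from time $0$ is dominated below by — or rather dominates, in the relevant direction — the ordinary parallel toppling started from $s_0 = \eta + m\cdot\delta_0$ for an appropriate $m$, since in the unfrozen process the origin simply cannot topple more than finitely often before the frozen process (where it is pinned high) forces at least as many topplings elsewhere. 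Concretely, I expect that the $n$-wave odometer dominates the ordinary parallel toppling odometer for $\eta + (2dn)\cdot\delta_0$ (or some fixed multiple of $n$), because pinning $u^{(z)} \equiv n$ at the origin injects a steady supply of chips into the neighbors, at least as fast as dumping $2dn$ chips once. Feeding $m = CR^d$ (absorbing the factor $2d$ into $C$) into the Fey–Levine–Peres bound then yields that the support of the $(CR^d)$-wave contains $[-R,R]^d$.

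The step I expect to be the real obstacle is making the comparison in the previous paragraph rigorous: \eqref{eq:odometer_wave_comp} as stated compares the two processes only once the unfrozen odometer has caught up to $w_0$ at the frozen site, and here $w_0 = n\cdot\delta_0$ is large, so I need to argue that for the \emph{stabilizable} auxiliary sandpile $\eta + m\cdot\delta_0$ the origin does topple at least $\lceil n \rceil$ times (guaranteeing $t_0 < \infty$ and $v_{t_0} \geq w_0$), or else argue directly that the pinned process dominates a sequential-toppling legalization of the dumped pile via the abelian property. I would handle this by choosing $m$ large enough that the origin is guaranteed to topple $\geq n$ times in the ordinary process — a crude bound like $m \geq 2dn$ suffices since each topple of the origin removes only $2d$ chips and returns at most $2d$, and one can use the abelian property to fire the origin $n$ times first — and then invoking the first line of \eqref{eq:odometer_wave_comp} to conclude $u_t^{(0)} = v_{t+t_0} \geq v_{t_0} \geq n\cdot\delta_0$ pointwise for the unfrozen odometer $v$ of $\eta + m\cdot\delta_0$, which is exactly a lower bound on the $n$-wave by $v$. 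Everything else — tracking dimensional constants and reading off the cube from the Fey–Levine–Peres estimate — is routine.
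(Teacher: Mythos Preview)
Your overall plan --- reduce to the constant $(2d-2)$ background by monotonicity, invoke the cube lower bound from \cite{fey2008limiting}, and compare the frozen $n$-wave to an ordinary odometer via \eqref{eq:odometer_wave_comp} --- is exactly the paper's. The gap is in the direction of the comparison. You want the $n$-wave $u_t$ to \emph{dominate} an ordinary odometer $v_t$, so that the support of $u_\infty$ inherits the cube from $v_\infty$; this is the \emph{second} line of \eqref{eq:odometer_wave_comp}, whose hypothesis is $n = w_0(0) \ge \sup_t v_t(0)$, i.e.\ an \emph{upper} bound on how many times the origin topples in the unfrozen process. You instead arrange a \emph{lower} bound $\tilde v(0) \ge n$ and invoke the first line, but that line gives $v_{t+t_0} \ge u_t$, the wrong direction (your displayed ``$u_t^{(0)} = v_{t+t_0}$'' is unjustified). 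Concretely, your claim that the $n$-wave dominates the ordinary odometer for $\eta + (2dn)\delta_0$ is generally false: $2dn$ chips already force $\tilde v(0) \ge n$, typically strictly since chips return to the origin, so pinning the origin at $n$ yields a process sitting \emph{below} the unfrozen one, not above it.

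The paper's route is to drop $m$ chips with $m$ chosen so that the cube from Theorem~4.1 of \cite{fey2008limiting} has radius at least $R$, and then \emph{upper}-bound the terminal odometer at the origin, $\tilde v(0) \le C R^d$. The second line of \eqref{eq:odometer_wave_comp} then shows the $\tilde v(0)$-wave --- hence the $(CR^d)$-wave --- dominates $\tilde v$, so its support contains $[-R,R]^d$. The ingredient you are missing is precisely this upper bound on $\tilde v(0)$, which is part of (or follows easily from) the cited result; once you have it, the first line of \eqref{eq:odometer_wave_comp} and the ``fire the origin $n$ times first'' heuristic play no role.
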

\begin{proof}
	From Theorem 4.1 in \cite{fey2008limiting}, we know that if $n$ chips are placed at the origin 
	on a background of $(2d - 2)$, then the support of the terminal odometer, $\tilde{v}$, contains a cube of radius 
	$r_n \geq (n^{1/d}-3)/2$. Moreover, $\tilde{v}(0) \leq C n^d$. This implies, by \eqref{eq:odometer_wave_comp} that a $(C n^d)$-wave contains $[-n,n]^d$.
\end{proof}

\begin{lemma} \label{lemma:finite_exploding}
	The last-wave is well-defined, $\hat{M}_{\eta}(0) < \infty$.
\end{lemma}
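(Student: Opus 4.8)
The plan is to produce an explicit finite $n$ for which the $n$-wave at the origin is not stabilizable, by combining the deterministic lower bound of Lemma~\ref{lemma:square_lower_bound} with the renormalization of Proposition~\ref{prop:renormalization_properties}. Work throughout on the probability-one event of Proposition~\ref{prop:renormalization_properties}, with $k$ the fixed (large) side length of a good cube. First I would pick $R$ so large that the set of indices $i\in\Z^d$ with $Q_k(i)\subset[-R,R]^d$ is a connected macroscopic box of cardinality comparable to $(R/k)^d$, hence larger than the bound $(\log R)^{5/2}$ from part~(2) of Proposition~\ref{prop:renormalization_properties} on the size of any component of $\mathcal C_\infty^c$ that meets it; were this box disjoint from $\mathcal C_\infty$ it would lie inside a single component of $\mathcal C_\infty^c$, contradicting that bound, so it contains an index $i$ with $Q_k(i)\in\mathcal C_\infty$, and therefore $[-R,R]^d$ contains a good cube of the infinite cluster. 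Taking $n:=CR^d$ with $C$ the constant of Lemma~\ref{lemma:square_lower_bound}, the support of the $n$-wave then contains $[-R,R]^d\supset Q_k(i)$, so every site of the good cube $Q_k(i)$ has toppled.

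The substance of the argument is to show that once one good cube of $\mathcal C_\infty$ has fully toppled, the explosion cascades through all of $\mathcal C_\infty$; since $\mathcal C_\infty$ is infinite this forces infinitely many sites to topple, so the $n$-wave does not stabilize and $\hat M_\eta(0)\le n=CR^d<\infty$. Because $\mathcal C_\infty$ is connected on the macroscopic lattice it carries an infinite self-avoiding path of good cubes $Q_k(i)=Q_k(i_0)\sim Q_k(i_1)\sim\cdots$, and since the support of the odometer is non-decreasing in $t$ it is enough to verify the one-step implication: if $Q_k(i_\ell)$ is entirely toppled, then so, eventually, is $Q_k(i_{\ell+1})$. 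The shared $(d-1)$-dimensional face $F\subset Q_k(i_\ell)$ is toppled, so every site of the adjacent layer $C_1\subset Q_k(i_{\ell+1})$ receives a chip and hence carries at least $2d-1$ chips --- the one place where $\eta\ge 2d-2$ is used. As in the proof of Proposition~\ref{prop:crossing_time}, a site of $C_1$ with $2d-1$ background chips now holds $2d$ chips and topples; its firing pushes the neighbouring sites of $C_1$ over; $C_1$ topples; the next layer is then fully chipped; and the toppling sweeps across $Q_k(i_{\ell+1})$ layer by layer. The origin, frozen at the positive value $n$, only injects extra chips and cannot obstruct this.

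The step I expect to be the main obstacle is this cascade --- precisely, the assertion that a good cube chipped along an entire face must topple completely. The mechanism in the proof of Proposition~\ref{prop:crossing_time} delivers this provided one knows that every $(d-1)$-dimensional layer of the cube contains a site carrying $2d-1$ background chips; this holds with probability tending to $1$ and can be folded into the definition of a good cube without disturbing the renormalization, but under the literal crossing-time definition one must either deduce this layer property or else bridge the as-yet-untoppled part of a cube using the path-filling Lemma~\ref{lemma:local_convexity}. In any case no randomness beyond Propositions~\ref{prop:crossing_time}--\ref{prop:renormalization_properties} enters: the conclusion is deterministic on their probability-one event.
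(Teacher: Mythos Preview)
Your proposal is correct and follows the same approach as the paper, whose proof is a single sentence invoking Lemma~\ref{lemma:square_lower_bound} and Proposition~\ref{prop:renormalization_properties}. Your flagged obstacle is not one: the maximum defining a good cube ranges over $z\in\bar Q_k$, so lines lying on a face of $\partial Q_k(i_{\ell+1})$ are already included, and the fully toppled face of $Q_k(i_\ell)$ contains such a line (in any direction orthogonal to $i_{\ell+1}-i_\ell$), whence the crossing-time bound of Proposition~\ref{prop:crossing_time} gives the cascade directly with no extra layer hypothesis or appeal to Lemma~\ref{lemma:local_convexity}.
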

\begin{proof}
	By Lemma \ref{lemma:square_lower_bound}
	and Proposition \ref{prop:renormalization_properties}, if the origin is fired a sufficient number of times, the support of the odometer contains a good cube, $Q_k \subset \mathcal{C}_{\infty}$.
\end{proof}

\begin{lemma} \label{lemma:last_wave_bound}
	The last wave started at $z$ is bounded by one outside the interior of the penultimate-cluster of $z$. Moreover, for all $t \geq 1$ and $x,z \in \Z^d$, 
	\[
	u_t^{(z)}(x) \leq (1 + \tilde{u}^{(z)}(x)).
	\]
\end{lemma}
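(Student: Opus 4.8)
The plan is a direct, step-by-step comparison of the last-wave $u_t^{(z)}$ — the $\hat{M}_\eta(z)$-wave — with the penultimate-wave $\tilde u_t^{(z)}$ — the $(\hat{M}_\eta(z)-1)$-wave — exploiting that the penultimate-wave stabilizes. Write $\hat M := \hat{M}_\eta(z)$, so by \eqref{eq:n_wave} both odometers obey $w_{t+1}(x) = \lfloor (\sum_{y\sim x} w_t(y) + \eta(x))/2d\rfloor$ at every $x \neq z$, with frozen values $u_t^{(z)}(z) = \hat M$ and $\tilde u_t^{(z)}(z) = \hat M - 1$ for all $t$, and initial data $u_0^{(z)} = \hat M\,\delta_z$, $\tilde u_0^{(z)} = (\hat M - 1)\,\delta_z$. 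The estimate is purely deterministic, valid for any $\eta$.

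First I would record two routine monotonicity facts: (i) each wave odometer is non-decreasing in $t$ — a one-line induction using that $a \mapsto \lfloor a/2d\rfloor$ is non-decreasing; and (ii), a consequence of (i), that since the penultimate-wave is stabilizable with terminal odometer $\tilde u^{(z)}$, one has $\tilde u_t^{(z)} \leq \tilde u^{(z)}$ for every $t \geq 0$. It therefore suffices to prove the finite-time bound $u_t^{(z)}(x) \leq 1 + \tilde u_t^{(z)}(x)$ for all $t \geq 0$ and $x \in \Z^d$.

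This I would establish by induction on $t$. At $t = 0$ we have $u_0^{(z)} - \tilde u_0^{(z)} = \delta_z$, so the bound holds. Assume it at time $t$. At $x = z$ it is immediate, since $u_{t+1}^{(z)}(z) = \hat M = 1 + (\hat M - 1) = 1 + \tilde u_{t+1}^{(z)}(z)$. For $x \neq z$, using that $x$ has exactly $2d$ neighbours in $\Z^d$,
\[
u_{t+1}^{(z)}(x) = \Big\lfloor \frac{\sum_{y\sim x} u_t^{(z)}(y) + \eta(x)}{2d}\Big\rfloor \leq \Big\lfloor \frac{\sum_{y\sim x}\big(1 + \tilde u_t^{(z)}(y)\big) + \eta(x)}{2d}\Big\rfloor = 1 + \Big\lfloor \frac{\sum_{y\sim x}\tilde u_t^{(z)}(y) + \eta(x)}{2d}\Big\rfloor = 1 + \tilde u_{t+1}^{(z)}(x),
\]
where the penultimate equality is just $\lfloor a/2d + 1\rfloor = \lfloor a/2d\rfloor + 1$ because $\sum_{y\sim x} 1 = 2d$. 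Combining the finite-time bound with (ii) yields $u_t^{(z)}(x) \leq 1 + \tilde u_t^{(z)}(x) \leq 1 + \tilde u^{(z)}(x)$, which is the claim.

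I do not expect a genuine obstacle here: the only point that must be handled with care is that the $2d$ neighbours of a site each contribute exactly one additional unit to the floored average, which is precisely why the additive constant $1$ propagates through the recursion without accumulating; and that the frozen site $z$ — the sole location where the two waves differ initially — keeps a difference of exactly $1$ by construction.
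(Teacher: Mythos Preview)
Your proof is correct and follows essentially the same route as the paper: an induction on $t$ through the floor recursion \eqref{eq:n_wave}, with the key observation that the $2d$ copies of the additive $1$ from the neighbours pull cleanly through the floor. The only cosmetic difference is that the paper inducts directly on the bound $u_t \le 1 + \tilde u$ and closes the step using stability of the terminal penultimate odometer, $\Delta \tilde u + \eta \le 2d-1$, whereas you first prove $u_t \le 1 + \tilde u_t$ and then pass to the terminal odometer by monotonicity; both packagings of the same computation.
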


\begin{proof}
	To reduce clutter, we take $z = 0$. We prove this by induction on $t$. 
	The base case $t=0$ follows by definition. For all $t \geq 1$, the definition also ensures it holds at the origin.
	So, we may take $x \not = 0$ and check:
	\begin{align*}
	u_{t+1}(x) &= \lfloor \frac{ \sum_{y \sim x}  u_{t}(y) + \eta(x)}{2 d} \rfloor \\
	&\leq \lfloor \frac{ \sum_{y \sim x} (1 + \tilde{u}(y)) + \eta(x)}{2 d} \rfloor \\
	&= 1 + \tilde{u}(x) + \lfloor \frac{ \sum_{y \sim x} (\tilde{u}(y)-\tilde{u}(x)) + \eta(x)}{2 d} \rfloor \\
	&= 1 + \tilde{u}(x)
	\end{align*}
	as $\Delta \tilde{u}(x) + \eta(x) \leq (2 d- 1)$ for $x \not = 0$.

\end{proof}

\begin{lemma} \label{lemma:approx_subadditivity}
	The penultimate-cluster arrival times are subadditive: for all $a,b,z \in \Z^d$, 
	\[
	\tilde{T}(a,z) \leq \tilde{T}(a,b) + \tilde{T}(b, z).
	\]	
\end{lemma}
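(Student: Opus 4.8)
The plan is to exploit the "frozen" structure of the last-wave together with monotonicity of parallel toppling. The key point is that the penultimate-cluster arrival time $\tilde T(a,c)$ records the first time the last-wave started at $a$ reaches the *penultimate-cluster* $\mathcal P(c)$ of the site $c$ — and $\mathcal P(c)$ is a nonempty neighborhood of $c$ that does *not* depend on $a$ or $b$. So the natural route is: run the last-wave from $a$ until it overlaps $\mathcal P(b)$ (this takes time $\tilde T(a,b)$); at that moment, the last-wave odometer $u^{(a)}_{\tilde T(a,b)}$ already dominates $n\cdot\delta_b$ for some $n\ge 1$ on $\mathcal P(b)$ — in particular it dominates a configuration that, restarted from $b$, would generate at least the penultimate-wave from $b$; then by monotonicity the subsequent evolution of $u^{(a)}$ dominates the last-wave started from $b$, so after an additional $\tilde T(b,c)$ steps it has reached $\mathcal P(c)$.

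Concretely, I would proceed as follows. First, by Lemma~\ref{lemma:finite_exploding} (and its obvious translate to an arbitrary starting site, which follows since $\eta$ is stationary and we are on the probability-$1$ event of Proposition~\ref{prop:renormalization_properties}), all the quantities $\hat M_\eta(\cdot)$, and hence $\mathcal P(\cdot)$ and $\tilde T(\cdot,\cdot)$, are finite, so the inequality is not vacuous. Second, set $t_1 := \tilde T(a,b)$. By definition $u^{(a)}_{t_1}(\mathcal P(b))>0$, i.e.\ the odometer of the last-wave from $a$ is strictly positive somewhere on $\mathcal P(b)$. I claim this forces the full last-wave from $b$, namely $u^{(b)}_\cdot$, to be dominated (after a time shift $t_1$) by $u^{(a)}_\cdot$: indeed $u^{(a)}$ is $\{a\}$-frozen with $u^{(a)}_{t_1}\ge 0$ everywhere and $u^{(a)}_{t_1}\ge 1$ at some site of $\mathcal P(b)$, and since a single chip reaching any neighbor of $b$ is enough to initiate — via the overlap mechanism described after Proposition~\ref{prop:renormalization_properties} and the monotone recursion \eqref{eq:parallel_toppling_recursion} — at least the same explosion that defines $u^{(b)}$. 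The relation \eqref{eq:odometer_wave_comp}, with its time shift $t_0$, is exactly the comparison lemma I need here: it gives $u^{(a)}_{t_1 + s}\ge (\text{the }m\text{-wave from }b)_s$ for the appropriate $m$, and $m \ge 1$ suffices because the last-wave from $b$ is the *minimal* non-stabilizable wave, so any wave from $b$ with $m\ge \hat M_\eta(b)$ chips dominates $u^{(b)}$ — and here I should take care to argue $m$ is in fact large enough, or else argue that overlap of an infinite/explosive odometer already dominates $u^{(b)}$.

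The main obstacle is this last comparison step — making precise that "$u^{(a)}$ has reached $\mathcal P(b)$" implies "$u^{(a)}$ thereafter dominates $u^{(b)}$ shifted in time." The subtlety is twofold: (i) $u^{(a)}$ is frozen at $a$, not at $b$, so it is *not* literally a wave from $b$, and (ii) reaching $\mathcal P(b)$ with a single chip is a priori weaker than having the $\hat M_\eta(b)$ chips that define the last-wave from $b$. Point (i) is handled by monotonicity: unfreezing a site only increases the odometer, and the dynamics away from $b$ only help; more carefully, one compares $u^{(a)}$ to the $\{a,b\}$-frozen process and then to the $\{b\}$-frozen process. Point (ii) is the crux, and I expect it is resolved by the *definition* of $\mathcal P(b)$ as the penultimate-cluster: once $u^{(a)}$ overlaps $\mathcal P(b)$, monotone parallel toppling started from that configuration, with $b$ no longer frozen, has enough chips in the neighborhood of $b$ to be non-stabilizable (this is precisely the overlap-implies-explosion principle, and it is why $\mathcal P$ rather than $\{b\}$ appears in \eqref{eq:lastwave_modified_fpp}). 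Granting this, the additional time to reach $\mathcal P(c)$ from the explosion now rooted (morally) at $b$ is at most $\tilde T(b,c)$ by definition, and adding $t_1 = \tilde T(a,b)$ gives the claim. If a clean statement of the overlap principle is available from Section~\ref{sec:fast_explosions}, the whole argument collapses to two invocations of \eqref{eq:odometer_wave_comp}.
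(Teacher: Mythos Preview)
Your outline has the right shape but contains a genuine gap at the comparison step, compounded by a misreading of $\tilde T$. By the paper's convention that scalar operations on sets are pointwise, $u^{(a)}_t(\mathcal P(b))>0$ means $u^{(a)}_t>0$ at \emph{every} point of $\mathcal P(b)$, not just somewhere; so at time $t_1=\tilde T(a,b)$ the wave from $a$ has already covered all of $\mathcal P(b)$. Even with this correction, your attempt to establish a global inequality $u^{(a)}_{t_1+t}\ge u^{(b)}_t$ fails: at $x=b$ the right side equals $\hat M_\eta(b)$, possibly large, while the left side need only be $\ge 1$. Your proposed detour through $\{a,b\}$- and $\{b\}$-frozen intermediaries does not repair this, since any such comparison with $u^{(b)}$ still requires control of order $\hat M_\eta(b)$ near $b$, and you only have $\ge 1$. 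Nor is there an ``overlap principle'' in Section~\ref{sec:fast_explosions} of the kind you invoke; that section yields linear time bounds, not odometer domination between waves from different sources.

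The missing ingredient is Lemma~\ref{lemma:last_wave_bound}: $u^{(b)}_t\le 1+\tilde u^{(b)}$, hence $u^{(b)}_t\le 1$ wherever $\tilde u^{(b)}=0$, in particular on the interior boundary $\partial^{\circ}\mathcal P(b)$. The paper therefore proves $u^{(a)}_{t_1+t}(x)\ge u^{(b)}_t(x)$ \emph{only} for $x\in\mathcal P(b)^c$, by induction on $t$: the boundary condition $u^{(b)}_t\le 1\le u^{(a)}_{t_1+t}$ holds on $\partial^{\circ}\mathcal P(b)$ (the second inequality uses the corrected reading of $\tilde T(a,b)$), and the floor recursion \eqref{eq:n_wave} propagates the comparison outward; the frozen site $a\in\mathcal P(b)^c$ is handled separately via $u^{(a)}_t(a)=\hat M_\eta(a)\ge 1\ge u^{(b)}_t(a)$, again from Lemma~\ref{lemma:last_wave_bound}. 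Since $u^{(a)}_{t_1}\ge 1$ on $\mathcal P(b)$ already, this covers all of $\mathcal P(c)$ by time $t_1+\tilde T(b,c)$. The definition of $\tilde T$ via $\mathcal P(\cdot)$ is tailored precisely so that Lemma~\ref{lemma:last_wave_bound} trivializes the boundary comparison --- this is the mechanism behind your correct intuition that ``$\mathcal P$ rather than $\{b\}$'' is the right object.
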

\begin{proof}
	Suppose $\mathcal{P}(z) \not \subseteq \mathcal{P}(b)$, otherwise the claim is immediate.
	It suffices to check  
	\begin{equation}
	w_t(x)  := u_{\tilde{T}(a,b)+t}^{(a)}(x) \geq u_{t}^{(b)}(x) \qquad \mbox{for all $t \geq 1$ and $x \in \mathcal{P}(b)^c$},
	\end{equation}
	which we do by induction. 
	By Lemma \ref{lemma:last_wave_bound}, $u_t^{(b)}(x) \leq 1$ if $\tilde{u}^{(b)}(x) = 0$. 
	Also, by definition of the penultimate-cluster, we have that $\tilde u^{(b)}$  is zero 
	on $\partial^{\circ} \mathcal{P}(b)$.
	Hence, for all $t \geq 1$ and $x \in \partial^{\circ} \mathcal{P}(b)$,  $u_t^{(b)}(x) \leq 1 \leq w_t(x)$.
	Using this and the inductive hypothesis, if $x \in \mathcal{P}(b)^c \cap \{a\}^c$, 
	\begin{align*}
	w_{t+1}(x) &= \lfloor \frac{ \sum_{y \sim x} w_t(y) + \eta(x) }{ 2d} \rfloor \\
	&\geq  \lfloor \frac{ \sum_{y \sim x} u_{t}^{(b)}(y) + \eta(x) }{ 2d} \rfloor \\
	&=  u_{t+1}^{(b)}(x).
	\end{align*}
	If $x \in \mathcal{P}(b)^c \cap \{a\}$, then $w_t(x) \geq 1 \geq u_t^{(b)}(x)$ as $\hat{M}_{\eta}(a) \geq 1$.

\end{proof}

\begin{lemma} \label{lemma:last_penultimate_cluster}
	There is a constant $\gamma > 0$ so that for all $n$ sufficiently large 
	and $|x| \leq n$, 
	\[
	\mathcal{P}(x) \subset x + [-(\log n)^{\gamma}, (\log n)^{\gamma}]^d.
	\]	
\end{lemma}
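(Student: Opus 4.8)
\medskip

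The plan is to prove the equivalent statement that the \emph{toppled support} $R_x := \{y \in \Z^d : \tilde u^{(x)}(y) > 0\}$ of the penultimate wave has $\ell^\infty$-diameter at most $(\log n)^{\gamma}$; since $x \in R_x$ and $\mathcal{P}(x) \subseteq R_x \cup \partial R_x$ by its definition, this yields $\mathcal{P}(x) \subseteq x + [-r_n,r_n]^d$ with $r_n := (\log n)^\gamma$ for any $\gamma > 5/2$ and $n$ large. (If $\hat M_\eta(x) = 1$ then the penultimate wave is the trivial $0$-wave and $\mathcal{P}(x) = \{x\}$, so we may assume $\hat M_\eta(x) \ge 2$; then $R_x$ is finite — the wave stabilizes — nonempty, contains $x$, and is connected, the connectedness because $\eta \le 2d-1 < 2d$ forces every toppled site other than $x$ to have a previously toppled neighbor.) The heart of the argument is to combine the path-filling Lemma~\ref{lemma:local_convexity} — which is exactly what prevents $R_x$ from being a long, thin tentacle threading between good cubes — with the polylogarithmic hole bound of Proposition~\ref{prop:renormalization_properties}.

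First I would show that $R_x$ is \emph{rectangularly convex}: $\br(\{a,b\}) \subseteq R_x$ for all $a,b \in R_x$. Given such $a,b$, choose a finite self-avoiding path $A \subseteq R_x$ from $a$ to $b$; by Lemma~\ref{lemma:local_convexity} the $A$-frozen toppling from $w_0 = 1_A$, $s_0' = \eta$ eventually has odometer $1$ on all of $\br(A)$, and since $s_0' \le 2d-1$ its odometer stays bounded by $1$ everywhere. An induction along the recursion \eqref{eq:frozen_parallel_toppling_recurse} then shows that $\tilde u^{(x)}$ dominates the time-shifted $A$-frozen odometer — one compares starting at the first (finite, as $A\subseteq R_x$) time $t_0$ at which $\tilde u^{(x)} \ge 1_A$, using $\tilde u^{(x)}(x) = \hat M_\eta(x) - 1 \ge 1$ at the frozen site and the same floor recursion with a not-smaller background elsewhere — so $\br(\{a,b\}) \subseteq \br(A) \subseteq R_x$.

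Next I would show $R_x$ contains no straight segment $\mathcal{L}_k^{(i,z)}$ lying inside a good cube $Q_k(j) \in \mathcal{C}_\infty$. If it did, the same comparison (now against $\{Q_k(j)^c \cup \mathcal{L}_k^{(i,z)}\}$-frozen toppling from $1_{\mathcal{L}_k^{(i,z)}}$, again with odometer $\le 1$) together with the goodness bound $\mathfrak C_k^{(i,z)} \le k^d$ would force $Q_k(j) \subseteq R_x$; but then, exactly as in the proof of Lemma~\ref{lemma:finite_exploding}, the penultimate wave overlaps a cube of $\mathcal{C}_\infty$ and is not stabilizable, contradicting the definition of $\hat M_\eta(x)$. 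To conclude, suppose $\operatorname{diam}_\infty(R_x) = D$; pick $a \in R_x$ and a coordinate $i$ with $|a_i - x_i| \ge D/(2d)$, and use rectangular convexity to place the axis segment through $x$ in direction $e_i$ from $x_i$ to $a_i$ inside $R_x$. This segment fully crosses a chain of at least $D/(2dk) - 2$ consecutive macroscopic cells, beginning within $\ell^\infty$-distance $1$ of the cell of $x$ and hence inside $[-n,n]^d$ for $n$ large (recall $|x| \le n$). By the previous paragraph none of these cells lies in $\mathcal{C}_\infty$, so the chain is a connected subset of a single component of $\mathcal{C}_\infty^c$ meeting $[-n,n]^d$, which by Proposition~\ref{prop:renormalization_properties}(2) has fewer than $(\log n)^{5/2}$ cells; therefore $D \le 2dk\big((\log n)^{5/2} + 2\big)$, as desired.

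The step I expect to be the main obstacle is making the monotone comparison of the auxiliary frozen topplings by $\tilde u^{(x)}$ fully rigorous — the two processes compared have different frozen sets and different initial odometers, so \eqref{eq:odometer_wave_comp} serves only as a template for the induction — and, relatedly, transferring the ``overlap of a $\mathcal{C}_\infty$-cube forces non-stabilizability'' mechanism from the free sandpile to the $\{x\}$-frozen wave. Once these are secured, the rectangular convexity from Lemma~\ref{lemma:local_convexity} and the hole bound of Proposition~\ref{prop:renormalization_properties} do the rest; as a byproduct one also obtains $\hat M_\eta(x) = O\big((\log n)^{5d/2}\big)$ for $|x|\le n$ by instead invoking Lemma~\ref{lemma:square_lower_bound}.
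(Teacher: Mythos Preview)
Your proposal is correct and follows essentially the same route as the paper: use the path-filling Lemma~\ref{lemma:local_convexity} to show the penultimate support is (a rectangle / rectangularly convex), then argue that if it were too long it would fully cross a good cube in $\mathcal{C}_\infty$, forcing non-stabilizability and contradicting the definition of $\hat M_\eta(x)$, so the hole bound of Proposition~\ref{prop:renormalization_properties}(2) caps the diameter at a polylog. The paper compresses all of this into two sentences and leaves the monotone comparison and the ``overlap $\Rightarrow$ non-stabilizable'' step implicit; your write-up makes those points explicit, and your side remark that $\hat M_\eta(x)=O((\log n)^{5d/2})$ via Lemma~\ref{lemma:square_lower_bound} is a correct bonus the paper does not state.
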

\begin{proof}
	By Lemma \ref{lemma:local_convexity}, $1\{ \tilde{u}^{(x)} > 0 \}$ is a rectangle,
	therefore it suffices to bound the maximal side length. By Proposition \ref{prop:renormalization_properties},
	for all $n$ sufficiently large, if any side length of the rectangle exceeds  $(2 k \log n)^{5/2}$ then it
	must overlap a good cube, contradicting stability. 
\end{proof}

\begin{lemma} \label{lemma:penultimate_upper_bound}
	There are constants $\gamma$ and $C$ so that on an event of probability 1, for all $n$ sufficiently large and $|x| \leq n$, 
	\[
	\hat{T}(x) \leq  C |x| +  (\log n)^{\gamma} .
	\]
\end{lemma}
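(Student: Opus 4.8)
The plan is a static-renormalization argument in the spirit of the discussion following Proposition \ref{prop:renormalization_properties}: once the last-wave overlaps a good cube of the infinite cluster $\mathcal{C}_{\infty}$ it should propagate through $\mathcal{C}_{\infty}$ at a linear rate governed by the chemical distance, and the sites it misses lie in polylogarithmically small holes which it fills quickly via Lemma \ref{lemma:local_convexity}. I work throughout on the probability-$1$ event of Proposition \ref{prop:renormalization_properties}, with $k$ the deterministic side length of a good cube. First I would record that the last-wave odometer $u_t := u^{(0)}_t$ is nondecreasing in $t$ (indeed $u_1 \geq u_0 = \hat{M}_{\eta}(0)\delta_0$ and the recursion \eqref{eq:n_wave} is monotone), so the supports $S_t := \{u_t > 0\}$ form an increasing family of connected sets containing the origin with $S_t \subseteq [-t,t]^d$. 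Since the last-wave does not stabilize (Lemma \ref{lemma:finite_exploding}), $\bigcup_t S_t$ is unbounded; being connected it coarsens to an unbounded connected subset of the macroscopic lattice, which must meet $\mathcal{C}_{\infty}$ because $\mathcal{C}_{\infty}^c$ has only finite components. Hence there is an a.s.\ finite, realization-dependent time $C_{\eta}$ by which $S_t$ contains a line crossing some good cube $Q_k(i_0) \subset \mathcal{C}_{\infty}$ with $|i_0| \leq C_{\eta}$; enlarging $C_{\eta}$ I may assume $Q_k(i_0)$ is fully toppled by time $C_{\eta}$ and $C_{\eta} \geq n_0$.

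The deterministic core is the propagation step: using the definition of a good cube, the frozen/unfrozen comparison \eqref{eq:odometer_wave_comp}, and $\eta \geq (2d-2)$, one shows that if a good cube $Q_k(i) \subset \mathcal{C}_{\infty}$ is fully toppled at time $t$, then within $Ck^d$ further steps every $\mathcal{C}_{\infty}$-neighbor $Q_k(j)$, $|i-j|=1$, is overlapped and then fully toppled (this is exactly the speed control asserted after Proposition \ref{prop:renormalization_properties}). Iterating along a geodesic in $\mathcal{C}_{\infty}$, any $Q_k(i) \subset \mathcal{C}_{\infty}$ is fully toppled by time at most $C_{\eta} + Ck^d\, d_{\mathcal{C}_{\infty}}(i_0, i)$.

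Now fix $n$ large and $|x| \leq n$. Let $j_x$ be the macroscopic index with $x \in Q_k(j_x)$; if $Q_k(j_x)$ is good set $i_x := j_x$, and otherwise $Q_k(j_x)$ lies in a component of $\mathcal{C}_{\infty}^c$ of macroscopic volume, hence diameter, $< (\log n)^{5/2}$ (Proposition \ref{prop:renormalization_properties}(2)), so I may take $i_x \in \mathcal{C}_{\infty}$ within macroscopic distance $(\log n)^{5/2}$ of $j_x$. I then bound $d_{\mathcal{C}_{\infty}}(i_0,i_x) \leq C|x|/k + (\log n)^{\gamma_1}$ for a suitable $\gamma_1$: when $|i_0-i_x| \geq (\log n)^2$ this is Proposition \ref{prop:renormalization_properties}(3) together with $|i_0| \leq C_{\eta}$ and $n$ large, and when $|i_0-i_x| < (\log n)^2$ the two indices lie in a macroscopic ball of polylogarithmic radius inside which $\mathcal{C}_{\infty}$ is connected (detouring around holes of diameter $< (\log n)^{5/2}$), so a crude volume bound gives $d_{\mathcal{C}_{\infty}}(i_0,i_x) \leq (\log n)^{\gamma_1}$. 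Combining with the previous paragraph, $Q_k(i_x)$ is fully toppled by time $C_{\eta} + Ck^{d-1}|x| + (\log n)^{\gamma_2}$, and likewise so is an entire macroscopic annulus of $\mathcal{C}_{\infty}$-cubes encircling $x$'s hole (all within macroscopic distance $(\log n)^{5/2}$ of $i_x$, costing only an extra polylog). The toppled region therefore contains a lattice path winding around $x$ whose bounding rectangle contains $x$, and Lemma \ref{lemma:local_convexity} applied to that path forces $x$ to topple within a further $|\br(\cdot)| \leq (Ck(\log n)^{5/2})^d$ steps. Collecting terms, $\hat{T}(x) \leq C|x| + (\log n)^{\gamma}$ for all $n$ large (the additive term absorbing $C_{\eta}$ and every polylogarithmic contribution once $n$ exceeds a realization-dependent threshold), with $C := Ck^{d-1}$ and $\gamma$ larger than all polylog powers appearing above.

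The main obstacle is the deterministic propagation step — verifying that a fully toppled good cube makes its $\mathcal{C}_{\infty}$-neighbors overlap, hence explode, within $O(k^d)$ steps. This is where $\eta \geq (2d-2)$, the precise set-up of the crossing time $\mathfrak{C}_k^{(i,z)}$ (boundary frozen at $0$, seed line frozen at $1$), and the monotonicity comparison \eqref{eq:odometer_wave_comp} must be combined carefully, since passing a toppled face to a neighbouring cube does not by itself supply a toppled crossing line there. A secondary technical point is checking that the explosion genuinely encircles each polylogarithmic hole near $x$ so that Lemma \ref{lemma:local_convexity} can be invoked.
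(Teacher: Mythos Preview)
Your proposal is correct and follows the same static-renormalization route as the paper's own (very terse) proof: reach a good cube in $\mathcal{C}_\infty$ in bounded time, propagate along $\mathcal{C}_\infty$ at a rate governed by the chemical distance, and fill the polylogarithmic hole around $x$ in polylog many steps. The propagation step you flag is not an obstacle: because the crossing time $\mathfrak{C}_k^{(i,z)}$ is taken over all $z \in \bar{Q}_k$, the shared face of a fully toppled good cube already contains a seed line $\mathcal{L}_k^{(i',z)}$ for its $\mathcal{C}_\infty$-neighbor, so the good-cube bound together with \eqref{eq:odometer_wave_comp} immediately gives the $O(k^d)$-step hop you need.
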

\begin{proof}
	Let $x \in \Z^d$ be given. Since the sandpile is exploding, at some constant time $C_\eta$
	the support of the odometer will overlap the infinite cluster at a good cube near the origin, $Q_k(z)$ for some $z \in \Z^d$. Once this occurs the arrival time to any site is at most a constant times the chemical distance in the infinite cluster. Let $Q_k(y)$ for $y \in \Z^d$ be one of the nearest cubes in $\mathcal{C}_{\infty}$ to $x$.  There are now two cases to consider.

	If $|z-y| < (\log n)^2$, then we may choose nearby points $M_{i} \in \mathcal{C}_{\infty}$ so that $(\log n)^{c} \leq d(z,M_i) \leq (\log n)^C$ and $x,y \in \br(\{M_i\})$. 
	If $|z-y| \geq (\log n)^2$, then by the chemical distance bound and the definition of $\mathcal{C}_{\infty}$, 
	within at most $C |z-y| \leq C(|z| + |y|)$ steps, $Q_k(y)$ will topple. 
	Once this happens, $\mathcal{P}(x)$ is surrounded in at most $(\log n)^C$ more steps and $\mathcal{P}(x)$ will fire in at most $|\mathcal{P}(x)|$ additional steps.
	
\end{proof}

\subsection{Convergence of the last-wave}
We show that the arrival time for the last-wave converges under rescaling. 

\begin{lemma} \label{lemma:last_penultimate_close}
	There exists a constant $\gamma > 0$ so that on an event of probability 1, for all $n$ sufficiently large and $|x| \leq n$,
	\[
	\hat{T}(x) \leq \tilde{T}(x) \leq \hat{T}(x) + (\log n)^{\gamma}.
	\]
\end{lemma}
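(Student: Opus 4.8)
\emph{The left-hand inequality} is immediate: since $x \in \mathcal{P}(x)$, every time $t$ at which $u_t(\mathcal{P}(x)) > 0$ (meaning every site of $\mathcal{P}(x)$ has fired) is a time at which $u_t(x) > 0$, so $\hat T(x) \le \tilde T(x)$. Using monotonicity of $t \mapsto u_t$ this can be rewritten as the identity $\tilde T(x) = \max_{y\in\mathcal{P}(x)}\hat T(y)$, so what must be shown is that, once the origin's last-wave has reached $x$, it fires the whole of the poly-logarithmically small cluster $\mathcal{P}(x)$ within poly-logarithmically many further steps. The plan is to rerun the mechanism behind Lemma \ref{lemma:penultimate_upper_bound}, but with the clock reset to time $\hat T(x)$ and with $x$ playing the role of the origin; equivalently, one proves the modulus-of-continuity estimate $\hat T(y) \le \hat T(x) + C|x-y| + (\log n)^{C}$ for $|x|,|y|\le n$ and specializes it to $y \in \mathcal{P}(x)$.

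For the bound itself, the regime $|x|\le (\log n)^{\gamma}$ is immediate, since there both $\hat T(x)$ and $\tilde T(x)$ are at most $C(\log n)^{\gamma}$ by Lemma \ref{lemma:penultimate_upper_bound}; so assume $|x| > (\log n)^{\gamma}$ with $\gamma$ taken large. Recall from Lemma \ref{lemma:last_penultimate_cluster} that $\mathcal{P}(x)$ lies in a box $B_x$ of side $\le 2(\log n)^{\gamma}$ about $x$, so $|\mathcal{P}(x)|\le C(\log n)^{d\gamma}$, and from Proposition \ref{prop:renormalization_properties}(2) that the hole of $\mathcal{C}_{\infty}^c$ around $x$ has at most $(\log n)^{5/2}$ cells. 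At time $\hat T(x)$ the support $S := \{u_{\hat T(x)} > 0\}$ is connected and contains $x$ and the origin, hence a path from $x$ to $0$ of length $|x|$; truncating it gives a path $P\subseteq S$ starting at $x$, staying in $\{|y-x|\le D\}$, and reaching Manhattan distance $D := C_1 k(\log n)^{5/2}$ from $x$. By Lemma \ref{lemma:local_convexity} and the monotone domination \eqref{eq:odometer_wave_comp} (applicable because $P$ is far from the origin), the origin's wave fires all of $\br(P)$ within $|\br(P)|\le (2D)^d = (\log n)^{C}$ steps; and $\br(P)$, having $\ell_\infty$-diameter at least $D/d$, contains an axis-parallel segment long enough that the poly-logarithmic bound on hole sizes forces it to cross some good cube $Q_k(j)\in\mathcal{C}_{\infty}$ completely. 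Thus $Q_k(j)$ is overlapped, and so explodes (crossing time $\le k^d$), by time $\hat T(x) + (\log n)^{C}$. From the exploding cube $Q_k(j)$, which is within poly-logarithmic macroscopic distance of $B_x$, the chemical-distance bound of Proposition \ref{prop:renormalization_properties}(3) and the hole-filling property fire every site of $B_x \supseteq \mathcal{P}(x)$ within a further $(\log n)^{C}$ steps, exactly as at the end of the proof of Lemma \ref{lemma:penultimate_upper_bound}. Adding these poly-logarithmic contributions and enlarging $\gamma$ gives $\tilde T(x) = \max_{y\in\mathcal{P}(x)}\hat T(y) \le \hat T(x) + (\log n)^{\gamma}$.

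\emph{The main obstacle} is the step converting ``the support has reached $x$'' into ``a cube of $\mathcal{C}_{\infty}$ within poly-logarithmic distance of $x$ is overlapped within poly-logarithmically many steps'': near $x$ the support may be only a thin filament, and one must verify --- via the path-filling Lemma \ref{lemma:local_convexity} and the small-holes property --- that its bounding rectangle is simultaneously small enough to be filled quickly and long enough to cut clean across a cube of $\mathcal{C}_{\infty}$. This is precisely where the hypothesis $\eta \ge (2d-2)$ is essential, as it underlies both Lemma \ref{lemma:local_convexity} and the hole-filling invoked afterwards. Everything else is bookkeeping with the poly-logarithmic length scales supplied by Proposition \ref{prop:renormalization_properties}.
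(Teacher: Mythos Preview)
Your proposal is correct and follows essentially the same approach as the paper's proof: both use that at time $\hat T(x)$ the (connected) support of the last-wave contains a long path near $x$, invoke the path-filling Lemma~\ref{lemma:local_convexity} together with the small-holes bound of Proposition~\ref{prop:renormalization_properties} to force a nearby good cube in $\mathcal{C}_\infty$ to be overlapped within poly-logarithmically many further steps, and then defer to the argument of Lemma~\ref{lemma:penultimate_upper_bound} to fire all of $\mathcal{P}(x)$. Your write-up is in fact more careful than the paper's rather terse proof about the timing of the filling step and about why the bounding rectangle of the truncated path is simultaneously small enough to be filled in $(\log n)^C$ steps and long enough in some coordinate direction to cut across a cube of $\mathcal{C}_\infty$.
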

\begin{proof}
	The first inequality is immediate.  For the second inequality, let $C$ and $n_0$ be as in Proposition \ref{prop:renormalization_properties}.
	Take $n \geq n_0$ and suppose $v_t(x) > 0$ for $\min(t, |x|) > C$ and $|x| \leq n$.  We must show that there is a nearby good cube $Q_k(y) \subset \mathcal{C}_{\infty}$
	which has already fired. Once we have shown this, the same argument as in Lemma \ref{lemma:penultimate_upper_bound}
	allows us to conclude. 	This is, however, a consequence of Lemma \ref{lemma:local_convexity} and Proposition \ref{prop:renormalization_properties}. Any path of topplings of length at least $( 2k \log n)^{5/2}$ must overlap a good cube. 
\end{proof}

\begin{prop} \label{prop:convergence_of_the_last_wave}
	On an event of probability 1, the rescaled last-wave arrival times 
	\[
	n^{-1} \hat{T}_{\eta}([n x])
	\] 
	converge locally uniformly to $\mathcal{N}_{p}$, a continuous, convex, one-homogeneous function on $\R^d$. 
\end{prop}
\begin{proof}
	In light of Lemma \ref{lemma:last_penultimate_close} it suffices to prove the result
	for $\tilde{T}$. Convergence in integer directions follows from the subadditive ergodic theorem. 
	Everywhere convergence then follows from continuity and approximation. The properties of $\mathcal{N}_p$
	are immediate from the scaling and microscopic subadditivity.
\end{proof}

\begin{remark}
	Convergence of the last wave may be viewed as a sort of bootstrap percolation shape theorem. 
	Sites are initially randomly assigned two thresholds, 1 or 2. A site with threshold $l$
	becomes infected when at least $l$ of its neighbors are infected. Infected sites remain infected. 
	The above shows that if you start off with a large enough cluster of infected sites at the origin, every site will eventually become 
	infected and the speed at which the infection spreads converges. 
	
	For more on the relationship between sandpiles and bootstrap percolation, see Section \ref{sec:criteria} below. 
	Similar shape theorems include \cite{garet2012asymptotic, kesten2008shape,alves2002shape, cox1981some} and 
	especially \cite{willson1978convergence, gravner1993threshold,fey2010limiting}.
\end{remark}

\section{Proof of Theorem \ref{theorem:convergence_of_time_constant}} \label{sec:proof_of_theorem}
Let $\eta$ be drawn from the event of full probability in Proposition \ref{prop:convergence_of_the_last_wave}.
It suffices to show that the last-wave is a good approximation of the original process.

\begin{prop} \label{prop:approximate_subadditive}
	On an event of probability 1, there are constants $C_1(\eta), C_2(\eta)$
	so that for all $x \in \Z^d$, 
	\begin{equation} \label{eq:ineq1}
	T(x) \leq \hat{T}(x) + C_1(\eta)
	\end{equation}
	and
	\begin{equation} \label{eq:ineq2}
	\hat{T}(x) \leq T(x) + C_2(\eta),
	\end{equation}
	where the last-wave arrival time $\hat{T}$ is defined in \eqref{eq:lastwave_fpp}  
	and $T(x) := \min \{ t \geq 0 : v_t(x) > 0 \}$. 
\end{prop}
\begin{proof}
	
	Recall that $v_t$ is the parallel toppling odometer for $\eta + M_{\eta}  \delta_0$ and recall the last-wave odometer $u_t$ from Section \ref{subsec:n_wave}.
	
	We first check \eqref{eq:ineq1}. 
	Let $u'_t$ be the $\mathcal{P}(0)$-frozen parallel toppling odometer with initial conditions $u'_0(x) = 1\{x \in \mathcal{P}(0)\}$ and
	$s_0' = \eta$ 
	where $\mathcal{P}(0)$ is the penultimate-cluster for $0$ defined in \eqref{eq:penultimate_cluster}.
	By Lemma \ref{lemma:last_wave_bound}, for all $t \geq 0$, $u_{t}$ is at most 1 on the inner boundary of the penultimate-cluster, $\partial^{\circ} \mathcal{P}(0)$.  
	Hence, as $\eta \leq (2 d -1)$, 
	by \eqref{eq:frozen_parallel_toppling_recurse} and the definition of the last-wave we have $u'_t |_{\mathcal{P}(0)^c} \geq u_t |_{\mathcal{P}(0)^c}$ for all $t \geq 0$. Also, since $\eta + M_{\eta}  \delta_0$ is not stabilizable, for some $t_0 \geq C_{\eta}$, $v_{t_0} |_{\mathcal{P}(0)} \geq u'_{0} |_{\mathcal{P}(0)}$. 
	Hence by \eqref{eq:odometer_wave_comp}, $v_{t + t_0} |_{\mathcal{P}(0)^c} \geq u'_t |_{\mathcal{P}(0)^c} \geq u_t |_{\mathcal{P}(0)^c}$ for all $t \geq 0$, 
	completing the check of \eqref{eq:ineq1}

	%
	We now verify \eqref{eq:ineq2}. We first consider the special case where only
	one firing at the origin is needed to have an infinite last-wave. 
	
	\subsection*{Step 1: Special case,  \texorpdfstring{$\hat{M}_{\eta}(0) = 1$}{One firing} }
	Denote the reachable sets up to time $t$ for the last wave and exploding sandpile as 
	\begin{equation} \label{eq:reachablesets_definition}
	\begin{aligned}
	\mathcal{R}_t &:= \{ x \in \Z^d : v_t(x) > 0 \} \\
	\hat{\mathcal{R}}_t &:= \{ x \in \Z^d : u_t(x) > 0 \}.
	\end{aligned}
	\end{equation}
	Note, by minimality, if $\hat{M}_{\eta}(0) = 1$, then $\eta(0) + M_{\eta}(0) = 2 d$. 
	This together with $\eta \geq (2 d- 2)$, implies a strong regularity. 
	Specifically, we show by induction that for all $t \geq 0$,
	\begin{equation} \label{eq:induction1}
	\mathcal{R}_t \subseteq \hat{\mathcal{R}}_t 
	\end{equation}
	and 
	\begin{equation} \label{eq:induction2}
	\begin{aligned}
	|v_t(x \pm e_i \pm e_j) - v_t(x)| &\leq 1  \qquad \mbox{for all $x \in \Z^d$ and $e_i \not = e_j$} \\
	|v_t(x \pm e_i) - v_t(x)| &\leq 1   \qquad \mbox{for all $x \in \Z^d$ and $e_i$}
	\end{aligned}
	\end{equation}
	and 
	\begin{equation} \label{eq:induction3}
	v_t(0) \geq \max_{x \in \Z^d} v_t(x).
	\end{equation}
	The base case $t = 0$ is immediate, so suppose \eqref{eq:induction1}, \eqref{eq:induction2}, and \eqref{eq:induction3}
	hold at $t \geq 0$ and we check $(t+1)$.  
	\subsubsection*{Inductive step for \eqref{eq:induction3}}
	\eqref{eq:parallel_toppling_recursion} and \eqref{eq:induction3} at time $t$ imply
	if $x \not = 0$, 
	\begin{align*}
	v_{t+1}(x) &\leq \lfloor \frac{2 d  v_t(0) + \eta(x) }{2 d} \rfloor \\
	&= v_t(0) + \lfloor \frac{\eta(x)}{2 d} \rfloor \\
	&= v_t(0) \\
	&\leq v_{t+1}(0),
	\end{align*}
	as $\eta \leq (2 d - 1)$. 
	
	\subsubsection*{Inductive step for \eqref{eq:induction2}}
	We first check the origin. By \eqref{eq:induction3}, if $v_t(y) = v_t(0)-1$
	for some $y \sim 0$, then $\Delta v_t(0) + 2 d \leq (2 d -1)$. Otherwise, 
	suppose $v_t(e_i + e_j) = v_t(0) - 1$ for some $e_i \not = e_j$ and the origin is unstable at time $t$. Then,
	$v_t(e_i) = v_t(0)$ and $v_t(e_j) = v_t(0)$. This implies, by \eqref{eq:induction2} applied to $e_i$ and $e_j$, that all other neighbors $y \sim (e_i + e_j)$ have a lower bound, $v_t(y) \geq v_t(0) - 1$. 
	Hence, $\Delta v_t(e_i + e_j) \geq 2$, which implies that $(e_i+e_j)$ is unstable at time $t$
	using $\eta \geq (2 d - 2)$. 
	
	Now, take $x \not = 0$ and suppose for sake of contradiction 
	\begin{equation} \label{eq:unstable}
	\Delta v_t(x) + \eta(x) \geq 2 d
	\end{equation}
	but for some adjacent neighbor $(x + e_j)$,  
	\begin{equation} \label{eq:stable_neighbor1}
	v_t(x+e_j) = v_t(x) - 1 \qquad \mbox{and} \qquad  \Delta v_t(x+e_j) + \eta(x+e_j) \leq 2 d -1.
	\end{equation}
	At least two other adjacent neighbors, $y' \sim x$ must satisfy $v_t(y') = v_t(x) + 1$.
	Indeed, otherwise by \eqref{eq:stable_neighbor1} and \eqref{eq:induction2},	 $\Delta v_t(x) \leq 0$, violating
	our assumption \eqref{eq:unstable}. However, one of those neighbors must be 
	$y' = x \pm e_i$ for some $e_i \not= e_j$. This contradicts
	\eqref{eq:induction2} at time $t$ since 
	\[
	v_t(x + e_i) = v_t(x + e_i + (-e_i + e_j)) + 2.
	\]
	
	Next, take a diagonal neighbor,  $(x + e_i + e_j)$ for $i \not = j$, and suppose for sake of contradiction \eqref{eq:unstable}
	but
	\begin{equation} \label{eq:stable_neighbor2}
	v_t(x+e_i+e_j) = v_t(x) - 1 \qquad \mbox{and} \qquad  \Delta v_t(x+e_i+e_j) + \eta(x+e_i+e_j) \leq (2 d -1).
	\end{equation}
	By \eqref{eq:unstable} there must be at least one adjacent neighbor $y \sim x$ with $v_t(y) = v_t(x) + 1$.
	This neighbor cannot be $(x+e_i)$ or $(x+e_j)$ as it would contradict \eqref{eq:induction2} for $(x+e_i+e_j)$.
	Possibly $y  = (x \pm e_{i'})$ for $i' \not \in \{ i,j\}$,  $y = x - e_i$, or $y = x - e_j$.
	In these cases, 
	\begin{equation} \label{eq:large_neighbors}
	v_t(x + e_i) = v_t(x+e_j) = v_t(x) = v_t(x+e_i+e_j) + 1.
	\end{equation}
	Indeed, if not, then, say, $v_t(x+e_i) = v_t(x) - 1$, and so there must be an additional neighbor, 
	$y' \sim x$, $y' \not = y$, with $v_t(y') = v_t(x) + 1$.  But, either $y'$ or $y$ is diagonal to $(x + e_i)$, 
	which contradicts \eqref{eq:induction2}.

	Assuming \eqref{eq:large_neighbors}, the same argument implies $v_t(y'') \geq v_t(x+e_i+e_j)$ for all $y'' \sim (x+e_i+e_j)$.
	Indeed, such a $y''$ with $v_t(y'') = v_t(x + e_i + e_j) - 1$ would be diagonal 
	to either $x + e_i$ or $x + e_j$. 
	This together with \eqref{eq:large_neighbors} shows $\Delta v_t(x + e_i + e_j) \geq 2$. 
	which contradicts \eqref{eq:stable_neighbor2}.

	\subsubsection*{Inductive step for \eqref{eq:induction1}}
	It suffices to check this for $x \not = 0$ as $u_t(0) = 1$. 
	Suppose for sake of contradiction there is some site $x$ with 
	\begin{equation} \label{eq:explode_unstable}
	\Delta v_t(x) + \eta(x) \geq 2 d
	\end{equation}
	but
	\begin{equation} \label{eq:wave_stable}
	\Delta u_t(x) + \eta(x) \leq (2 d -1)
	\end{equation}
	and $u_t(x) = 0$. By \eqref{eq:induction1}, $u_t(x) = v_t(x) = 0$ and hence 
	\begin{equation} \label{eq:neighbor_bound}
	v_t(y) \leq 1 \qquad \mbox{ for all $y \sim x$},
	\end{equation}
	by \eqref{eq:induction2}. However, \eqref{eq:induction1} and \eqref{eq:neighbor_bound} imply that
	$\Delta v_t(x) + \eta(x) \leq \Delta u_t(x) + \eta(x) \leq (2 d -1)$, contradicting \eqref{eq:explode_unstable}.

	\subsection*{Step 2: General case}
	In the general case, we introduce a pair of approximations to which we can apply the arguments of the special case. Let $\tilde{v}$ be the terminal (unfrozen) odometer for $\eta + (M_{\eta}(0)-1)  \delta_0$ and let
	\begin{equation} \label{eq:penultimate_touched}
	\tilde{\mathcal{P}} := \{0\} \cup \{ x \in \Z^d : \mbox{ there is $y \sim x$ with $\tilde{v}(y) > 0 $}\}.
	\end{equation}
	Let $\tilde{w}_t$ be the $\tilde{\mathcal{P}}$-frozen parallel toppling odometer with initial conditions  
	\begin{equation} \label{eq:cluster_frozen}
	\begin{aligned}
	\tilde{w}_0(x) &= 1 \{ x \in \tilde{\mathcal{P}} \} \\
	s_0' &= \eta.
	\end{aligned}
	\end{equation}
	Let $w_t$ be the parallel toppling odometer for $s: \Z^d \to \Z$ where 
	\begin{equation}
	s(x) := 
	\begin{cases}
	\eta(x) &\mbox{ if $x \not \in \tilde{\mathcal{P}}$}  \\
	(2 d - 1) + \delta_0  &\mbox{ otherwise}.
	\end{cases}
	\end{equation}
	Denote the reachable sets for these processes by
	\begin{equation}
	\begin{aligned} 
	\mathcal{R}_t &:= \{ x \in \Z^d : w_t(x) > 0 \} \\
	\tilde{\mathcal{R}}_t &:= \{x \in \Z^d : \tilde{w}_t(x) > 0\}.
	\end{aligned}
	\end{equation}
	
	The same argument as in Step 1 shows that
	\begin{equation} \label{eq:induction_specialcase2}
	\mathcal{R}_t \subseteq \tilde{\mathcal{R}}_t.
	\end{equation}
	
	We claim that we can conclude after proving the following inequalities, 
	\begin{equation} \label{eq:approx1}
	\tilde{w}_t \leq u_{t + c}
	\end{equation}
	\begin{equation} \label{eq:approx2}
	v_{t} \leq \left( w_{t} + \tilde{v}\right).
	\end{equation}
	Indeed, if $v_t(x) > 0$, then \eqref{eq:approx2} implies $\tilde{v}(x) > 0$ or $w_{t}(x) > 0$. In both cases, 
	using either \eqref{eq:induction_specialcase2} or \eqref{eq:cluster_frozen},
	$\tilde{w}_t(x) > 0$ and so by \eqref{eq:approx1} $u_{t+c}(x) > 0$.

	\subsubsection*{Proof of \eqref{eq:approx1}}
	We know that $|\tilde{\mathcal{P}}| = C < \infty$. Hence, at some finite time 
	$u_{c}(\tilde{\mathcal{P}}) \geq 1$. Monotonicity implies $\tilde{w}_t \leq u_{t + c}$.
	
	\subsubsection*{Proof of \eqref{eq:approx2}}
	This is true at $t = 0$, we use \eqref{eq:parallel_toppling_recursion} and induct,
	\begin{align*}
	v_{t+1}(x) &\leq \lfloor \frac{\sum_{y \sim x} v_t(y) + \eta(x) + M_{\eta}  \delta_0}{2 d } \rfloor \\
	&\leq \lfloor \frac{\sum_{y \sim x} (w_t(y) + \tilde{v}(y)) + \eta(x) + M_{\eta}  \delta_0}{2 d } \rfloor \\
	&= \tilde{v}(x) + \lfloor \frac{\sum_{y \sim x} w_t(y) +  \sum_{y \sim x} (\tilde{v}(y) - \tilde{v}(x)) + \eta(x) + M_{\eta}  \delta_0}{2 d } \rfloor \\
	&\leq \tilde{v}(x) + \lfloor \frac{\sum_{y \sim x} w_t(y) +  s(x)}{2 d } \rfloor \\
	&= \tilde{v}(x) + w_{t+1}(x).
	\end{align*}
	The third inequality used $\Delta \tilde{v}(x) + (M_{\eta} - 1)  \delta_0 + \eta \leq (2 d -1)$.

\end{proof}

\begin{remark}
	Some qualitative features of the limit shape are immediate. For example, the origin is an interior point and the limit 
	is invariant with respect to symmetries of the lattice (symmetry may fail in the periodic case introduced in Section \ref{sec:generalization}).	
	Also, a coupling with oriented percolation as in \cite{durrett1981shape, marchand2002strict} can be used to establish a `flat-edge' 
	for $p$ sufficiently close to one in all dimensions. We omit the details since it is routine --- see, for example, the proof of Theorem 1.2 in \cite{alves2002shape} or Theorem 6.3 in \cite{garet2004asymptotic}.
\end{remark}

\section{A generalization}	\label{sec:generalization}																																																																																																																																																																																																																																																																																																																																																																																																																																																																																																																																																																																																																																																																																																																																																																																																																																																																																																																																																																																																																																																																																																																																																																																																																																																																																																																																																																																																																																																																																																																																																																																																																																																																																																																																																																																																																																																																																																																																																																																																																																																																																																																																																																																																																																																																																																																																																																																																																																																																																																																																																																																																																																																																																																																																																																																																																																																																																																																																																																																																																																																																																																																																																																																																																																																																																																																																																																																																																																																																																																																																																																																																																																																																																																																																																																																																																																																																																																																																																																																																																																																																																																																																																																																																																																																																																																																																																																																																																																																																																																																																																																																																																																																																																																																																																																																																																																																																																																																																																																																																																																																																																																																																																																																																																																																																																																																																																																																																																																																																																																																																																																																																																																																																																																																																																																																																																																																																																																																																																																																																																																																																																																																																																																																																																																																																																																																																																																																																																																																																																																																																																																																																										
\subsection{Sufficient hypotheses} \label{sec:sufficient_hypotheses}
We present sufficient hypotheses on $\eta$ under which the arguments above go through seamlessly. 
Fix $\eta_{\min} \in \Z$ and let $\Omega$ denote the set of all bounded functions $\eta: \Z^d \to \Z$, $\eta_{\min} \leq \eta \leq (2 d- 1)$.
Endow $\Omega$ with the $\sigma$-algebra $\mathcal{F}$ generated by $\{ \eta \to \eta(x) : x \in \Z^d\}$. Denote the action of integer translation by $T: \Z^d \times \Omega \to \Omega$, 
\[
T(y, \eta)(z) = (T_y \eta)(z) := \eta(y + z),
\]
and extend this to $\mathcal{F}$ by defining  $T_y E := \{ T_y \eta : \eta \in E\}$.
Let $\mathcal{L} \subseteq \Z^d$ be a {\it sublattice}, a finite index subgroup of $\Z^d$.
Let $\mathbf{P}$ be a {\it stationary} and {\it ergodic} probability measure on $(\Omega, \mathcal{F})$
with respect to $\mathcal{L}$,
\begin{equation}
\mbox{Stationary: for all $E \in \mathcal{F}, y \in \mathcal{L}$: } \mathbf{P}(T_y E) = \mathbf{P}(E),
\end{equation}
\begin{equation}
\mbox{Ergodic:  $E = \bigcap_{y \in \mathcal{L}} T_y E$ implies that $\mathbf{P}(E) \in \{0,1\}$ }.
\end{equation}
We refer to the probability measure $\mathbf{P}$ as {\it explosive} if $\mathbf{P}(\eta \mbox{ is explosive}) = 1$.

Stationarity and ergodicity are the weakest hypotheses under which a convergence result is proved --- straightforward 
counterexamples can be constructed.
However, we do not expect exploding sandpiles to have a limit shape without an additional independence hypothesis. At the very least, 
our proof will not work, as domination by a coarsened product measure was essential. Our first hypothesis is hence a quantification of ergodicity. 

\begin{hyp}[Finite range of dependence] \label{hyp:finite_range_of_dependence}
	There exists a constant $K < \infty$ so that for all $x,y \in \Z^d$, $\eta(x)$ and $\eta(y)$ are independent if $|x-y| > K$. 
\end{hyp}

Next, fix a finite (rectangular) box with side length $k > 0$,  $\mathcal{B}_{k} := \{ x \in \Z^d : 1 \leq x_i \leq k_i \}$.
The $2 d$ external faces of $\mathcal{B}_k$ are 
\[
\begin{aligned}
\mathcal{F}_i &:= \{ x \in \bar{\mathcal{B}}_k : x_i = 0\}, \\
\mathcal{F}_{d+i} &:= \{x \in \bar{\mathcal{B}}_k : x_i = k_i+1\}.
\end{aligned}
\]
Take a face, $\mathcal{F}_i$, and let $w_t: \bar{\mathcal{B}}_k \to \N$ be the sequence of $\mathcal{B}_k^c$-frozen parallel toppling odometers with initial conditions  $w_0 = 1\{ x \in \mathcal{F}_i\}$ and $s_0' = \eta$. We say that $\mathcal{B}_k(\eta)$ can be {\it crossed} in direction $i$ if $w_{t}(x) \geq 1\{x \in \mathcal{B}_k \}$ for $t \geq |\mathcal{B}_k|$.

\begin{hyp}[Box-crossing] \label{hyp:box_crossing}
	For each $\delta > 0$, there is $k$ so that 
	\[
	\min_{j \in \Z^d} \mathbf{P}(\mathcal{B}^{(j)}_k(\eta) \mbox{ can be crossed in each direction}) > 1 - \delta, 
	\]
	where 
	\[
	\bigcup_{j \in \Z^d} \mathcal{B}^{(j)}_{k} := \bigcup_{j \in \Z^d} (\mathcal{B}_k + j  k) = \Z^d
	\]
	is a tiling of the lattice by $\mathcal{B}_k$.
	
\end{hyp}
If $\eta$ were recurrent, Hypothesis \ref{hyp:box_crossing} would imply $\eta$ explodes with probability 1. In particular,  no holes would develop in the support of the odometer. (If unfamiliar, see Section \ref{sec:criteria}
below for the definition of recurrence, although this is not used here.) Our next hypothesis 
ensures this and more: any sufficiently long path of topplings fills its bounding rectangle.

\begin{hyp}[Path-filling] \label{hyp:path_filling}
	There exists a constant $\gamma > 0$ so that on an event of probability 1, for all $n \geq n_0$, and every path of
	distinct points, $[-n,n]^d \supset L_m := \{z_1, \ldots, z_m\}$, $z_{i+1} \sim z_i$, of length $m \geq  (\log n)^{\gamma}$ the following holds. 
	The $L_m$-frozen parallel toppling odometer with initial conditions $w_0 = 1\{ x \in L_m\}$ and $s_0' = \eta$
	quickly exceeds 1 on the bounding rectangle of $L_m$:
	\[
	w_t \geq 1 \{ \br(L_m) \}  \qquad \mbox{ for $t \geq m^d$.}  
	\]
\end{hyp}

In order for $\eta$ to have a limit shape in dimensions $d \geq 3$, we need to strengthen 
Hypothesis \ref{hyp:box_crossing}. The next assumption prevents low-dimensional tendrils from burrowing through good cubes (for a counterexample in three-dimensions take a large cube filled with 4
and connect each of the faces with disjoint tunnels of 5). For a point $z \in \bar{\mathcal{B}}_k$ and direction $1 \leq i \leq d$, consider, as before, a line passing from one side of the box to the other 
\begin{equation}
\mathcal{L}_k^{(i,z)} := \bigcup_{j=1,\ldots,k_i} (z_1, \ldots, z_{i-1}, j, z_{i+1}, \ldots, z_d).
\end{equation}
We say $\mathcal{B}_k(\eta)$ is {\it strongly box-crossing} if, for all $1 \leq i \leq d$ and $z \in \bar{\mathcal{B}}_k$, $w_{|k|} \geq 1\{x \in \mathcal{B}_k \}$,
where $w_t$ is the odometer for $\{\mathcal{B}_k^c \cup \mathcal{L}_k^{(i,z)}\}$-frozen parallel toppling 
with initial conditions $w_0(x) = 1\{x \in \mathcal{L}_k^{(i,z)}\}$, $s_0' = \eta$.

\begin{hyp}[Strongly box-crossing] \label{hyp:box_filling}
	For each $\delta > 0$, there is $k$ so that, using the same notation as Hypothesis \ref{hyp:box_crossing}, 
	\[
	\min_{j \in \Z^d} \mathbf{P}(\mathcal{B}^{(j)}_k(\eta) \mbox{ is strongly box-crossing}) > 1 - \delta.
	\]
\end{hyp}

We now have made enough assumptions to prove convergence of the last-wave as in Section \ref{sec:last_wave}.

\begin{prop}[Convergence of the last-wave] \label{prop:convergence_of_the_generalized_last_wave}
	Under Hypotheses \ref{hyp:finite_range_of_dependence}, \ref{hyp:path_filling}, and \ref{hyp:box_filling},
	on an event of probability 1, $\eta$ is explosive and the rescaled {\it last-wave} arrival times, $n^{-1} \hat{T}_{\eta}([nx]) := n^{-1} \min \{ t > 0 : u_t([nx]) > 0\}$ converge locally uniformly to $\mathcal{N}_{\eta}$, a deterministic, continuous, convex, one-homogeneous function on $\R^d$
	depending only on the law of $\eta$.
\end{prop}
\begin{proof}
	For $\delta > 0$ small, pick side length $k$ from Hypotheses \ref{hyp:box_filling}. 
	For $j \in \Z^d$, let 
	\[
	X_j := 1\{ \mathcal{B}^{(j)}_k  \mbox{ is strongly box-crossing} \}. 
	\]
	By Theorem 0.0 in \cite{liggett1997domination}, $\{X_j\}_{j \in \Z^d}$ stochastically dominates a sequence 
	of Bernoulli independent random variables $\{Y_j\}_{j \in \Z^d}$ with $P(Y_j = 1) \geq (1 - \pi(\delta))$ for $\pi:[0,1]\to[0,1]$
	satisfying $\pi(\delta) \to 0$ as $\delta \to 0$. Therefore, for $\delta > 0 $ sufficiently small, on an event of probability 1, $\{X_j\}_{j \in \Z^d}$ contains an infinite supercritical percolation cluster $\mathcal{C}_{\infty}$. 
	
	The rest of the argument follows almost exactly the proof in Section \ref{sec:last_wave}. 
	The only minor change is in the proof of Lemma \ref{lemma:square_lower_bound}. We use $\eta \geq \eta_{\min}$
	rather than $\eta \geq (2 d - 2)$ and invoke Theorem 4.1 in \cite{levine2009strong} to get that the support of a $(C n^d)$-wave
	contains $[-n,n]^d$ (where the constant $C$ is larger than before).
\end{proof}

If additionally $\eta \geq (2 d - 2)$, then the argument given in Section \ref{sec:proof_of_theorem} implies that the exploding
sandpile is close to the last-wave and hence converges. Simulations indicate $\eta \geq (2d - 2)$ is not necessary, however, we have not found an alternative condition and are forced to assume this: 

\begin{hyp}[Wave-approximation] \label{hyp:wave_approximation}
	Suppose $\eta$ is explosive, let $u_t$ denote the last-wave, $v_t$ the parallel toppling odometer for $\eta + M_{\eta}  \delta_0$,
	and $\hat{T}_{\eta}$, $T_{\eta}$ the respective arrival times. On an event of probability 1, 
	\[
	\sup_{x \in [-n,n]^d} |\hat{T}_{\eta}(x)- T_{\eta}(x)| = o(n).
	\]
\end{hyp}

\begin{theorem}[Convergence of the exploding sandpile]
	Under the assumptions in Proposition \ref{prop:convergence_of_the_generalized_last_wave} and Hypothesis \ref{hyp:wave_approximation},
	on an event of probability 1, $\eta$ is explosive and the rescaled arrival times, $n^{-1} T_{\eta}([nx]) := n^{-1} \min \{ t > 0 : v_t([nx]) > 0\}$ converge locally uniformly to $\mathcal{N}_{\eta}$, a deterministic, continuous, convex, one-homogeneous function on $\R^d$
	depending only on the law of $\eta$.
\end{theorem}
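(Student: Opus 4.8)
The plan is to mimic the proof of Theorem~\ref{theorem:convergence_of_time_constant} in Section~\ref{sec:proof_of_theorem}, where the conclusion followed by combining convergence of the last-wave (Proposition~\ref{prop:convergence_of_the_last_wave}) with the deterministic two-sided comparison of Proposition~\ref{prop:approximate_subadditive}. Here the first ingredient is supplied verbatim by Proposition~\ref{prop:convergence_of_the_generalized_last_wave}: under Hypotheses~\ref{hyp:finite_range_of_dependence},~\ref{hyp:path_filling}, and~\ref{hyp:box_filling}, on an event $\Omega_1$ of probability $1$ the background $\eta$ is explosive and $n^{-1}\hat T_{\eta}([nx]) \to \mathcal{N}_{\eta}(x)$ locally uniformly, with $\mathcal{N}_{\eta}$ continuous, convex, and one-homogeneous. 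The second ingredient is precisely Hypothesis~\ref{hyp:wave_approximation}, which plays the role that Proposition~\ref{prop:approximate_subadditive} played before (that proposition being the only place $\eta \geq (2d-2)$ was genuinely used): on an event $\Omega_2$ of probability $1$, $\sup_{x \in [-n,n]^d} |\hat T_{\eta}(x) - T_{\eta}(x)| = o(n)$.

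First I would work on $\Omega_1 \cap \Omega_2$, still an event of probability $1$. Fix a compact set $K \subset \R^d$ and set $R := \sup_{x \in K} |x|_{\infty} + 1$, so that $[nx] \in [-Rn,Rn]^d$ for all $x \in K$ and $n \geq 1$. Then for such $x$,
\[
n^{-1} \bigl| T_{\eta}([nx]) - \hat T_{\eta}([nx]) \bigr| \;\leq\; n^{-1} \sup_{y \in [-Rn,Rn]^d} \bigl| T_{\eta}(y) - \hat T_{\eta}(y) \bigr| \;=\; n^{-1}\, o(n) \;\longrightarrow\; 0,
\]
where the right-hand side is $o(n)$ because $R$ is a fixed constant, and the bound is uniform in $x \in K$. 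Combining this with the locally uniform convergence $n^{-1}\hat T_{\eta}([nx]) \to \mathcal{N}_{\eta}(x)$ from Proposition~\ref{prop:convergence_of_the_generalized_last_wave}, the triangle inequality yields $n^{-1} T_{\eta}([nx]) \to \mathcal{N}_{\eta}(x)$ uniformly on $K$; since $K$ was arbitrary, convergence is locally uniform. The limit is the same $\mathcal{N}_{\eta}$, so it inherits continuity, convexity, and one-homogeneity, and $\eta$ is explosive on $\Omega_1$.

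There is essentially no obstacle once Proposition~\ref{prop:convergence_of_the_generalized_last_wave} and Hypothesis~\ref{hyp:wave_approximation} are in hand — the argument merely transports local uniformity through the $o(n)$ estimate. The only points requiring a little care are: (i) checking that the $o(n)$ bound of Hypothesis~\ref{hyp:wave_approximation}, stated along the boxes $[-n,n]^d$, dominates the error at all lattice points $[nx]$ with $x$ ranging over a fixed compact set (handled by the rescaling by $R$ above); and (ii) confirming that the two full-probability events can be intersected, which is immediate. Unlike in Section~\ref{sec:proof_of_theorem}, no fresh deterministic comparison is needed, since its content has been abstracted into Hypothesis~\ref{hyp:wave_approximation}.
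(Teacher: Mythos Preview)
Your proposal is correct and follows exactly the paper's own approach: the paper's proof is the single line ``Immediate from Hypothesis~\ref{hyp:wave_approximation} and Proposition~\ref{prop:convergence_of_the_generalized_last_wave},'' and you have simply unpacked this by intersecting the two full-probability events and passing the $o(n)$ comparison through the triangle inequality. Your additional care with the compact set $K$ and the rescaling by $R$ is a harmless elaboration of what the paper leaves implicit.
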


\begin{proof}
	Immediate from Hypothesis \ref{hyp:wave_approximation} and Proposition \ref{prop:convergence_of_the_generalized_last_wave}.
\end{proof}

\begin{remark} 
	Box-crossing with probability 1 implies $\eta$ is recurrent (see Section \ref{sec:criteria} if unfamiliar).  However, not every 
	recurrent sandpile is explosive  --- take $\eta = (2 d - 2)$ and use \cite{fey2010growth} --- and not every exploding sandpile has a recurrent initial condition ---  see Section \ref{sec:counterexample}. 
\end{remark}

\subsection{Examples satisfying the hypotheses}

The simplest way to ensure Hypotheses \ref{hyp:path_filling} and \ref{hyp:wave_approximation}
is to take $\eta \geq (2 d - 2)$. A random background can be built which satisfies the 
rest of the hypotheses using a {\it Bernoulli cloud}, see Figure \ref{fig:bernoulli_clouds}. Take $p > 0$, fix a finite set of points, $\mathcal{S} \subset \Z^d$ (say a triangle, circle, or a line), and independently sample a uniform random variable at each site on the lattice, $\{U_j\}_{j \in \Z^d}$. 
Then, let 
\begin{equation} \label{eq:bernoulli_cloud}
\eta(x)  :=  
\begin{cases} (2 d- 1)  & \mbox{ if there exists $j \in \Z^d$ such that $U_j < p$ and $x \in \{ \mathcal{S} + j \}$} \\
(2d - 2) & \mbox{ otherwise.}
\end{cases}
\end{equation}
Hypothesis \ref{hyp:finite_range_of_dependence} is satisfied as $|\mathcal{S}| < \infty$
and Hypothesis \ref{hyp:box_filling} as $p > 0$. 

Another family of examples is the {\it random checkerboard}. Fix a box $\mathcal{B}$ which tiles the lattice, $\Z^d = \bigcup_{j \in \Z^d} \mathcal{B}_j$.  Take functions $\zeta_1, \ldots, \zeta_m$, defined on the box,  $\zeta_i: \mathcal{B} \to \{ (2 d - 2), (2 d - 1)\}$. Suppose further that $\mathcal{B}(\zeta_1)$ contains at least one site 
with $(2d-1)$ chips along every straight line.  Let $\{Y_j\}_{j \in \Z^d}$ be a field of i.i.d. random variables, where $P(Y_j = i) = p_i$, for $i = 1, \ldots, m$. 
Then, let 
\begin{equation} \label{eq:random_checkerboard}
\eta(x) := \zeta_i(z(x)) \qquad \mbox{ if $Y_{j(x)} = i$}, 
\end{equation}
where $z(x)  \in \mathcal{B}$ is the position of $x$ in its tiled box, $\mathcal{B}_{j(x)}$. 
Finite range
of dependence is immediate by construction. If we further assume $p_1 > 0$, then Hypothesis \ref{hyp:box_filling} is satisfied by the assumption on $\mathcal{B}(\zeta_1)$.

The random checkerboard includes the degenerate case $p_1 = 1$, where $\eta$ is a periodic copy of $\zeta_1$. 
See Table \ref{tab:periodic_exploding_shapes} for pictures. 
In this case, if $\eta \geq (2 d- 2)$ but is not box-crossing, then the background is not explosive by
Theorem 4.2 in \cite{fey2010growth}. However, it is possible to build random (and periodic) checkerboard, exploding sandpiles with $\eta \not \geq (2 d - 2)$. One could then proceed in an adhoc 
manner to check the hypotheses.  However, we have not found a general recipe in this case. The counterexample in Section \ref{sec:counterexample} uses $\eta \not \geq (2d - 2)$.  

\begin{figure}
	\includegraphics[width=0.2\textwidth]{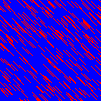}
	\includegraphics[width=0.2\textwidth]{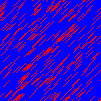}
	\includegraphics[width=0.2\textwidth]{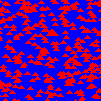}
	\includegraphics[width=0.2\textwidth]{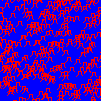} \\
	\includegraphics[width=0.2\textwidth]{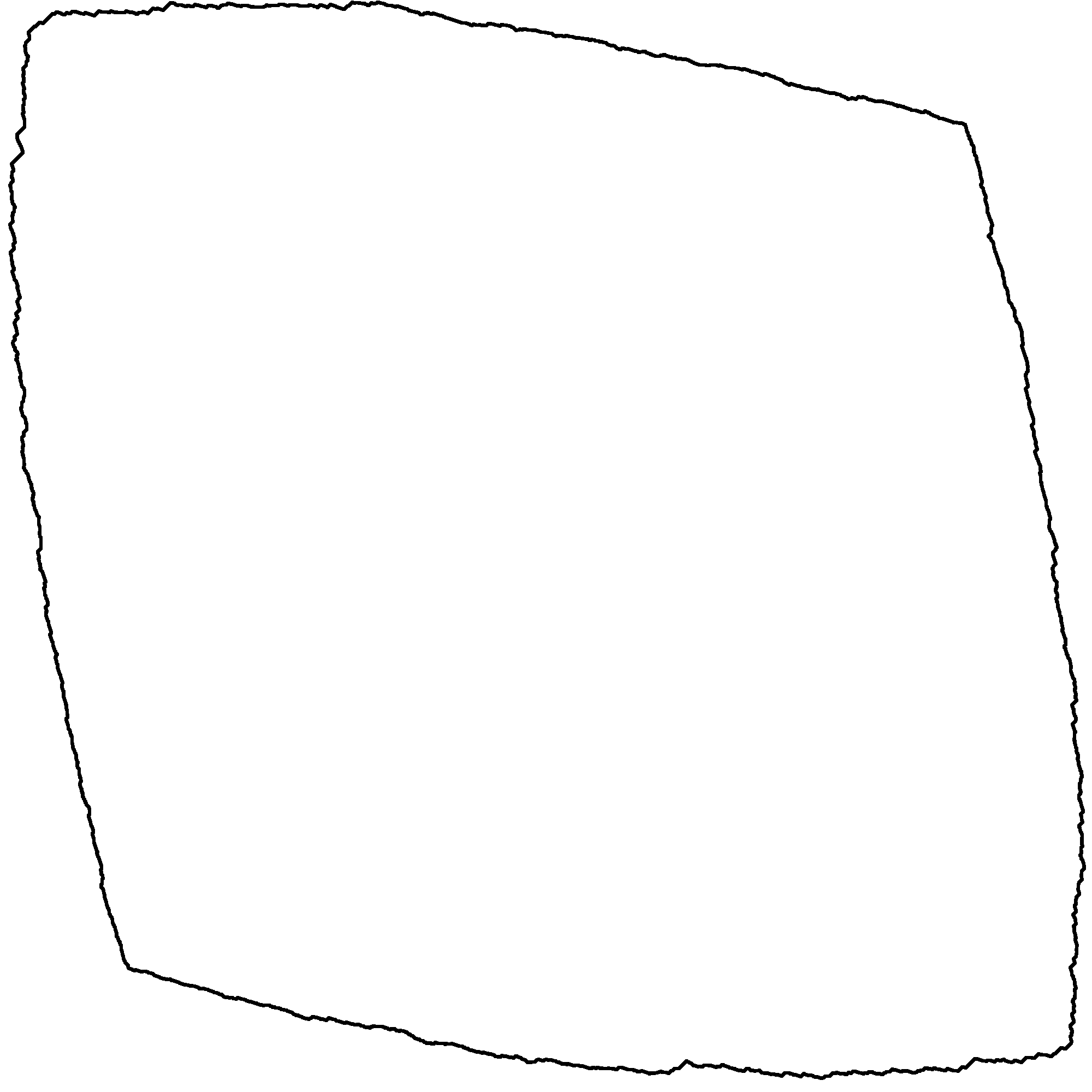}
	\includegraphics[width=0.2\textwidth]{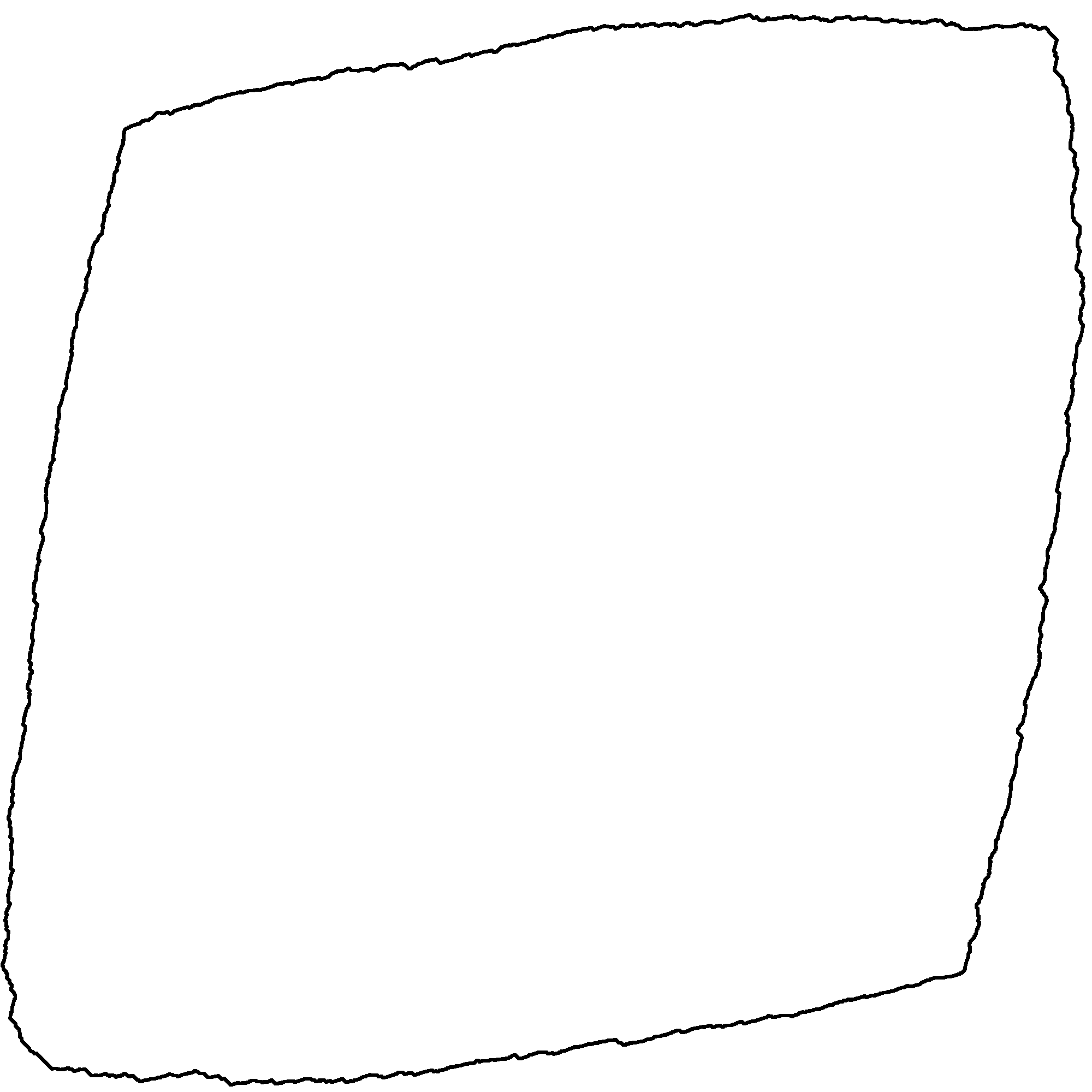}
	\includegraphics[width=0.2\textwidth]{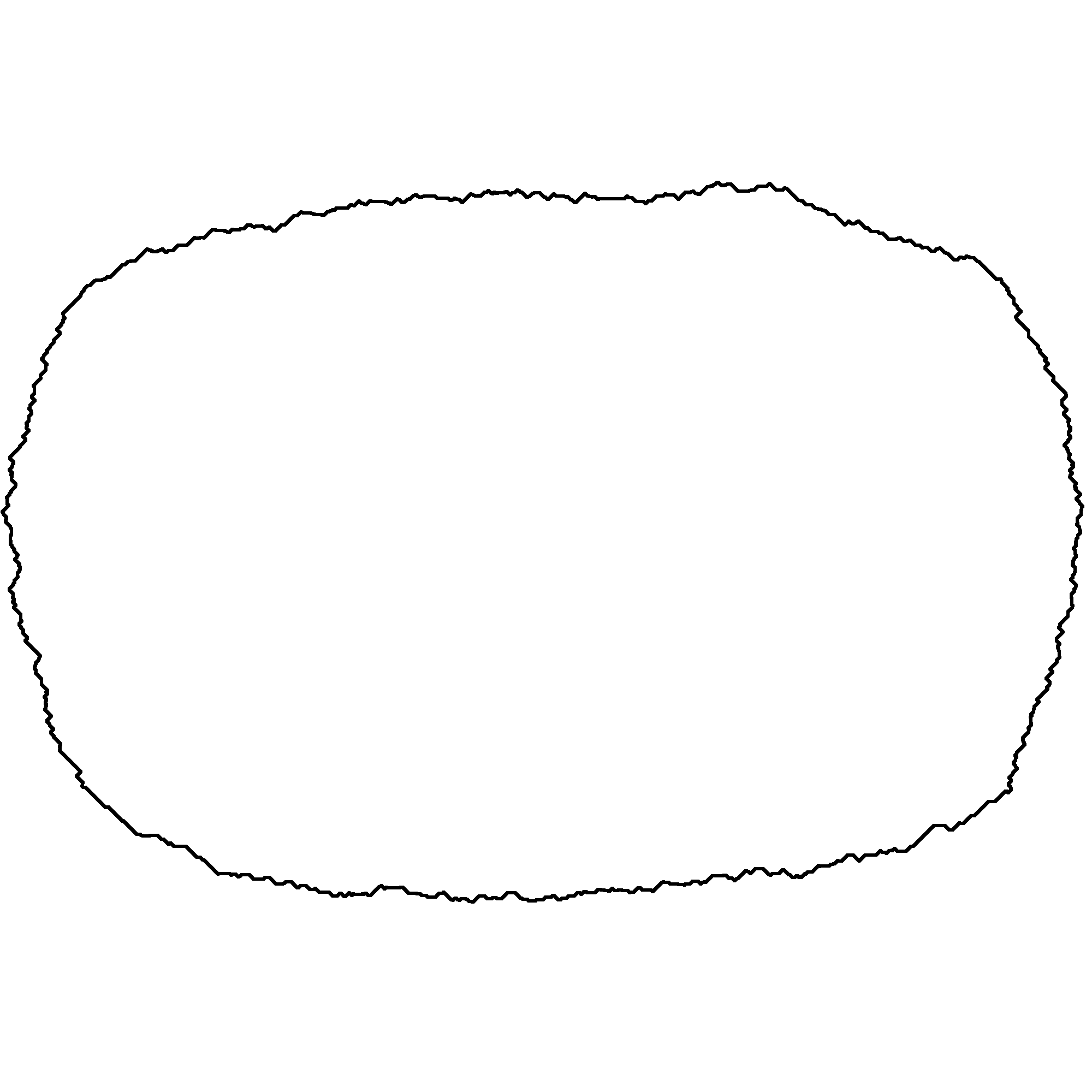}
	\includegraphics[width=0.2\textwidth]{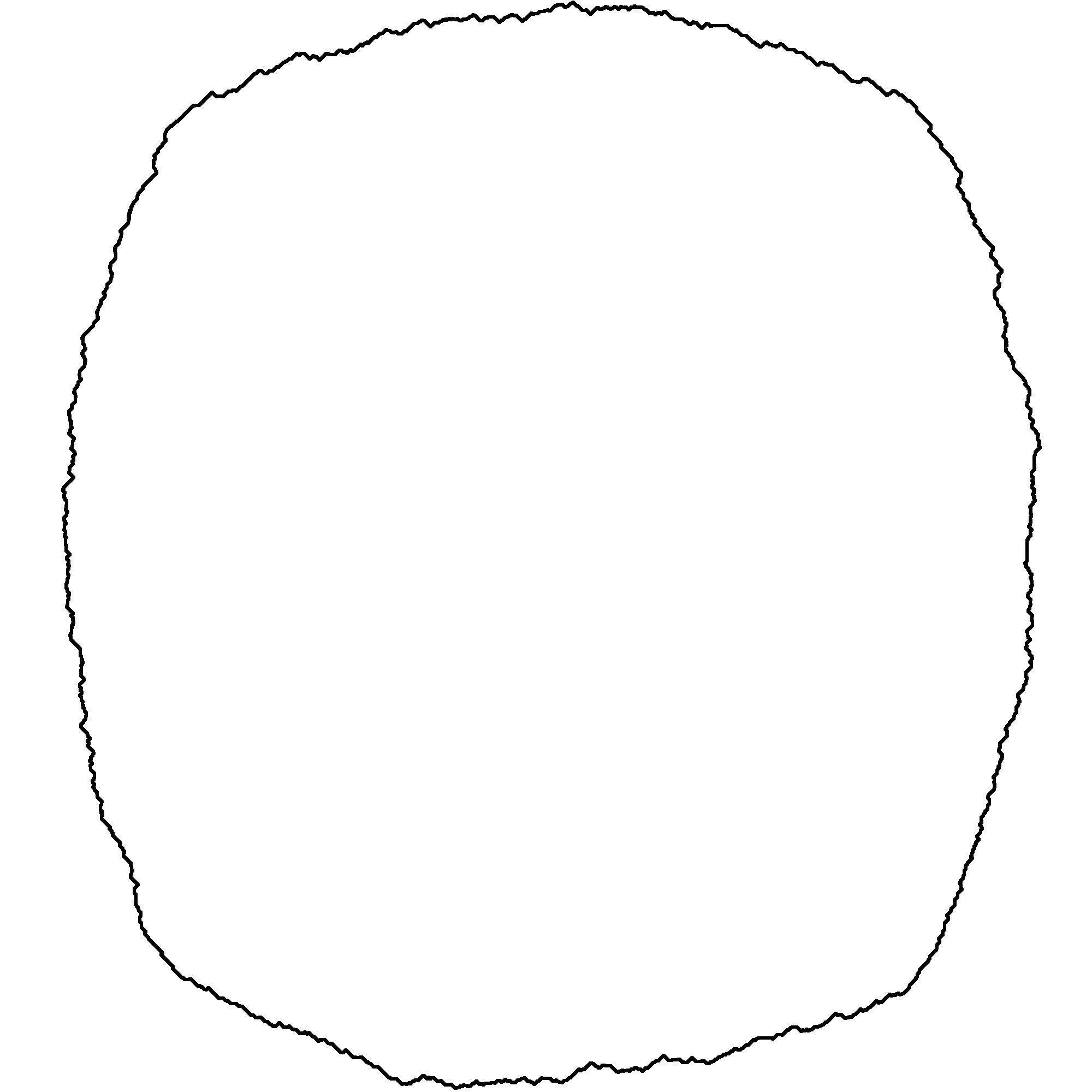}    
	\caption{Initial random backgrounds and computed limit shapes. The random backgrounds
		are built from Bernoulli clouds of the indicated point sets. Blue is 2 chips and red is 3 chips. }
	\label{fig:bernoulli_clouds}
\end{figure} 
\begin{table}
	\begin{tabular}{ c |  c |   c | c | c } \hline
		$\begin{bmatrix} 3 \end{bmatrix}$  & 
		$\begin{bmatrix} 2 & 3 \\
		3 & 2 
		\end{bmatrix}$   &
		$\begin{bmatrix}
		3& 2 & 3 \\ 
		2 & 3 & 3 \\
		3 & 2 & 3 
		\end{bmatrix}$ &
		$\begin{bmatrix}
		2 & 2 & 3  &  3 \\
		3 &  2 &  3 &  2 \\
		2 & 3 & 2 & 2 \\
		2 & 2 &  3 & 2 
		\end{bmatrix}$ &
		$\begin{bmatrix}
		3 & 2 & 3 & 3 & 3 \\
		2 & 2 & 3 & 3 & 3 \\
		2 &  2 & 3 & 2 & 2 \\
		3 & 3 & 2 & 2 & 2 \\
		3 & 3 & 3 & 2 & 3 
		\end{bmatrix}$ 
		\\ \hline
		\includegraphics[width=0.15\textwidth]{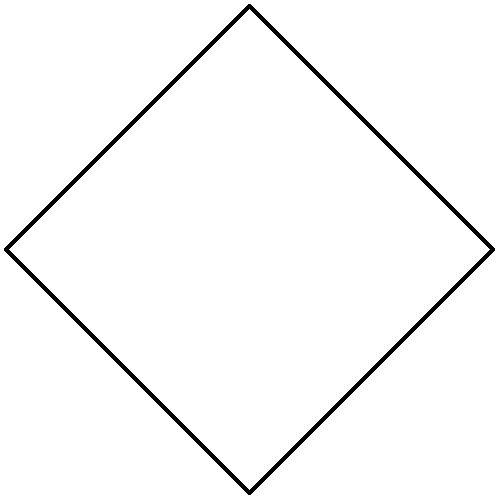} & 	
		\includegraphics[width=0.14\textwidth]{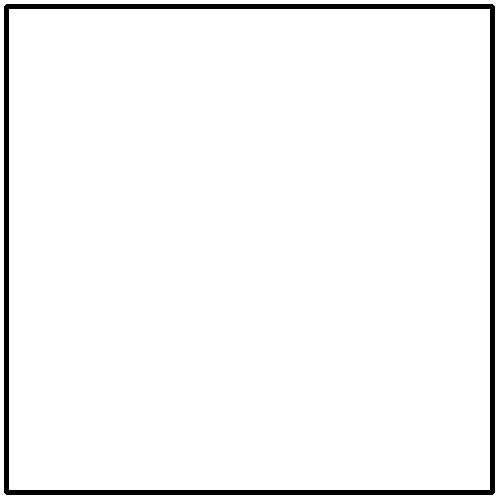} &
		\includegraphics[width=0.15\textwidth]{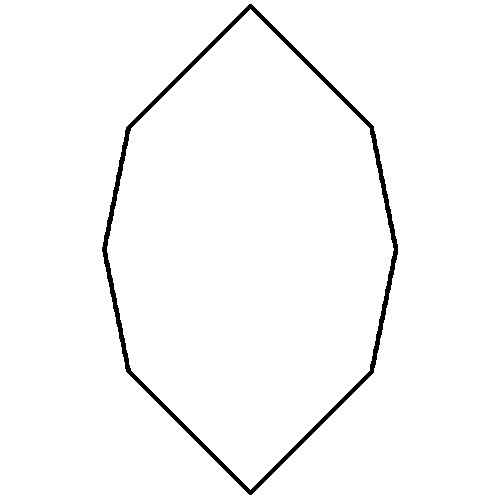} &
		\includegraphics[width=0.15\textwidth]{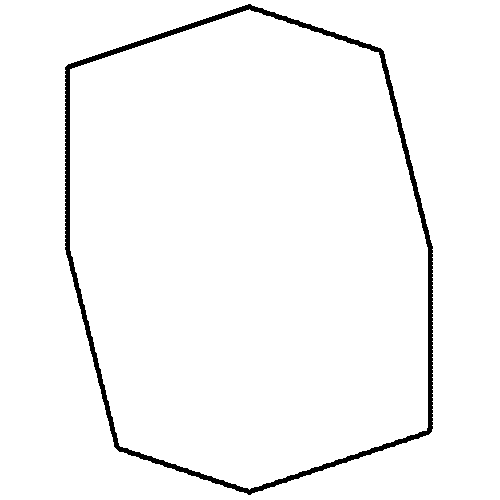} &
		\includegraphics[width=0.15\textwidth]{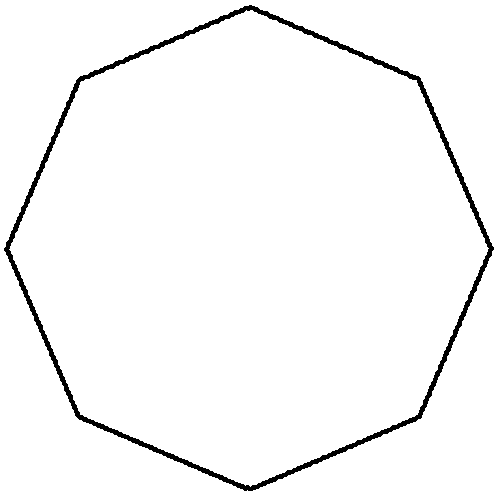} 
		\\ \hline
	\end{tabular}
	\caption{Computed limited shapes of periodic, checkerboard backgrounds of the indicated box.}	
	\label{tab:periodic_exploding_shapes}
\end{table}

\section{Failure of convergence} \label{sec:counterexample}
In this section we construct a family of exploding sandpiles which fail to have a limit shape. 
As the construction indicates, the counterexample is stable: it can be random or periodic. 

\begin{theorem} \label{theorem:counterexample}
	For each dimension $d \geq 2$, there are explosive backgrounds $\eta \not \geq (2d - 2)$ on $\Z^d$ which fail to have a limit shape;
	the first arrival times $\mathcal{T}(n) := \min\{ t > 0 : v_t(n e_1) > 0 \}$ do not converge,
	\begin{equation} \label{eq:counterexample_upper}
	\limsup_{n \to \infty} n^{-1} \mathcal{T}(n) \geq 3/2
	\end{equation}
	and 
	\begin{equation} \label{eq:counterexample_lower}
	\liminf_{n \to \infty} n^{-1} \mathcal{T}(n) = 1.
	\end{equation}
\end{theorem}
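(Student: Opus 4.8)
Here is the plan I would follow.

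\medskip
\noindent\textbf{The construction (and explosiveness).}
For each $d\ge 2$ I would realize the obstruction foreshadowed in the proof outline: a background propagating explosions at the maximal unit speed almost everywhere, but carrying a lacunary sequence of ``defect walls'' that force the advancing front into detours of a precisely tuned amplitude. Fix a rapidly growing sequence $L_1<L_2<\cdots$, say $L_{k+1}=L_k^2$. Put $\eta(x)=2d-1$ whenever $x_1\le 0$; for each $k\ge1$ let the $k$th wall be the slab $W_k:=\{x\in\Z^d: L_k\le x_1<2L_k\}$ and set $\eta=-C$ on $W_k$ (with $C=C_d$ a large constant) except on the door $D_k:=W_k\cap\{x_2=\lfloor L_k/2\rfloor\}$, where $\eta=2d-1$; set $\eta=2d-1$ at all remaining sites. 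Then $\eta\not\ge 2d-2$, the background is periodic in $x_2,\dots,x_d$ and may be randomized by perturbing the walls, and $\eta$ is explosive (so $M_\eta<\infty$): the walls are too sparse and lacunary to absorb the cascade from the origin, which one verifies by comparing $\eta$ from below with the background equal to $2d-1$ on the half-space $\{x_1<L_1\}$ and deeply deficient off it, noting that a half-space of the explosive value $2d-1$ (explosive by Theorem~\ref{prop:fey_levine_peres_explode}) supports an infinite cascade from a single chip even with an absorbing exterior (compare the frozen half-space dynamics). Write $v_t$ for the exploding odometer of $\eta+M_\eta\cdot\delta_0$ and, for $y\in\Z^d$, $\mathcal T(y):=\min\{t>0:v_t(y)>0\}$ (so $\mathcal T(n)=\mathcal T(ne_1)$ in the notation of the statement); for $d\ge3$ one uses the analogous hyperplane slabs $W_k,D_k$.

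\medskip
\noindent\textbf{A weighted-distance control of $\mathcal T$.}
Assign each site $y$ the weight $w(y)=1$ if $\eta(y)=2d-1$ and $w(y)=1+C/(2d)$ if $\eta(y)=-C$, and let $\rho(y)$ be the minimal $w$-weight $\sum_{i\ge1}w(\pi_i)$ over nearest-neighbour paths $0=\pi_0\sim\cdots\sim\pi_m=y$. I would establish two bounds. First, $\mathcal T(y)\ge\rho(y)$, by tracing the causal chain of topplings: if $y\ne 0$ first topples at time $\tau$ then $\sum_{z\sim y}v_{\tau-1}(z)\ge 2d-\eta(y)$ by \eqref{eq:parallel_toppling_recursion}, so some neighbour $z^\ast$ has toppled at least $\lceil(2d-\eta(y))/(2d)\rceil\ge w(y)$ times by time $\tau-1$, hence $\mathcal T(z^\ast)\le\mathcal T(y)-w(y)$; iterating until one reaches the origin produces a path from $0$ to $y$ of $w$-weight at most $\mathcal T(y)-1$. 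Second, $\mathcal T(y)\le 1+\mathrm{dist}_{\mathrm{fast}}(0,y)$, where $\mathrm{dist}_{\mathrm{fast}}$ is the minimal ordinary length of a path all of whose sites carry $2d-1$ chips: along such a path the $j$th site topples at time $j+1$, since a site holding $2d-1$ chips topples one step after any neighbour does (cf.\ the proof of Proposition~\ref{prop:crossing_time}), and the origin topples at $t=1$.

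\medskip
\noindent\textbf{Assembling the failure of convergence.}
For $y=2L_ke_1$, every lattice path from $0$ to $y$ meets each $x_1$-value in $[L_k,2L_k)$ and so contains at least $L_k$ sites of $W_k$; if at least $L_k/2$ of them lie off $D_k$ their excess weight is $\ge\tfrac{L_k}{2}\cdot\tfrac{C}{2d}$, forcing $\rho(y)\ge 3L_k$ once $C_d$ is large; otherwise the path visits the door's second coordinate $\lfloor L_k/2\rfloor$ while starting and ending at $x_2=0$, so its $x_2$-variation is $\ge 2\lfloor L_k/2\rfloor$ and its $x_1$-variation $\ge 2L_k$, whence $\rho(y)\ge 2L_k+2\lfloor L_k/2\rfloor\ge 3L_k-1$. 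Matching this, the fast channel that climbs monotonically to $x_2=\lfloor L_k/2\rfloor$ (threading the much shorter earlier doors), runs flat across $D_k$, and descends to the axis has length $2L_k+2\lfloor L_k/2\rfloor\le 3L_k$. Hence $\mathcal T(2L_k)=3L_k+O(1)$ and $n^{-1}\mathcal T(n)\to 3/2$ along $n=2L_k$, which gives \eqref{eq:counterexample_upper}. On the other hand, for $n=L_{k+1}-1$ the same channel — now not required to cross $W_{k+1}$ — shows $\mathcal T(n)\le L_{k+1}+O(L_k)$, while \eqref{eq:parallel_toppling_recursion} gives the light-cone bound $\mathcal T(n)\ge n$; since $L_k/L_{k+1}\to0$ this yields $\liminf_n n^{-1}\mathcal T(n)=1$, which is \eqref{eq:counterexample_lower}.

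\medskip
\noindent\textbf{Where the difficulty lies.}
The genuinely creative step is the construction. The technical core is the lower bound $\mathcal T\ge\rho$ — the statement that the front cannot shortcut a wall — which is exactly where the deficiency $-C$ enters and which the causal-chain argument makes both rigorous and quantitative (a soft argument would not register the extra time spent crawling through a defect). The other point demanding care, precisely because $\eta$ has defects unbounded below, is verifying explosiveness: one must rule out that the walls, though sparse, choke the cascade, which is what the half-space comparison above is for. A related bookkeeping point — that the descending branch of each detour does not re-enter $W_k$ fast enough to create a shorter route to $2L_ke_1$ — is automatic from $\mathcal T\ge\rho$.
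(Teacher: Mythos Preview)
Your argument is correct for the statement as written, but it follows a genuinely different route from the paper and yields a structurally weaker example. The paper constructs a \emph{periodic} background: on $\Z^2$ a $4\times4$ checkerboard with entries in $\{1,2,3\}$, extended to $d\ge3$ by stacking, $\eta(x)=\eta^{(2D)}(x_1,x_2)+2(d-2)$. The liminf is read off from a straight line of $(2d-1)$'s, much as you do; but the limsup is not obtained via a causal-chain weighted-distance bound. Instead one first topples every site carrying at least two chips, transforming $\eta$ into a second periodic tiling $\eta'\ge 2d-2$, reduces by symmetry to a width-$4$ cylinder, and then tracks the last-wave front by hand through one full period (twelve time steps to advance eight cells) to read off the exact speed $2/3$ in the $e_1$ direction. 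Your inequality $\mathcal T\ge\rho$ is more conceptual and avoids this explicit pulsating-front computation, but it leans essentially on the lacunary spacing $L_{k+1}=L_k^2$, so your background is neither periodic nor stationary in the $e_1$ direction. That matters: the role of this theorem in the paper is precisely to show that even periodic (or random-checkerboard, hence stationary ergodic) backgrounds can fail to converge, thereby motivating the extra hypotheses of Section~\ref{sec:generalization}. Your construction proves the literal statement but not this intended content, and the aside that it ``may be randomized by perturbing the walls'' does not close the gap --- no translation-invariant law places defect slabs at a lacunary sequence of scales.
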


\begin{figure}
	\includegraphics[width=0.4\textwidth]{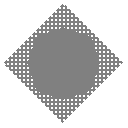}
	\includegraphics[width=0.4\textwidth]{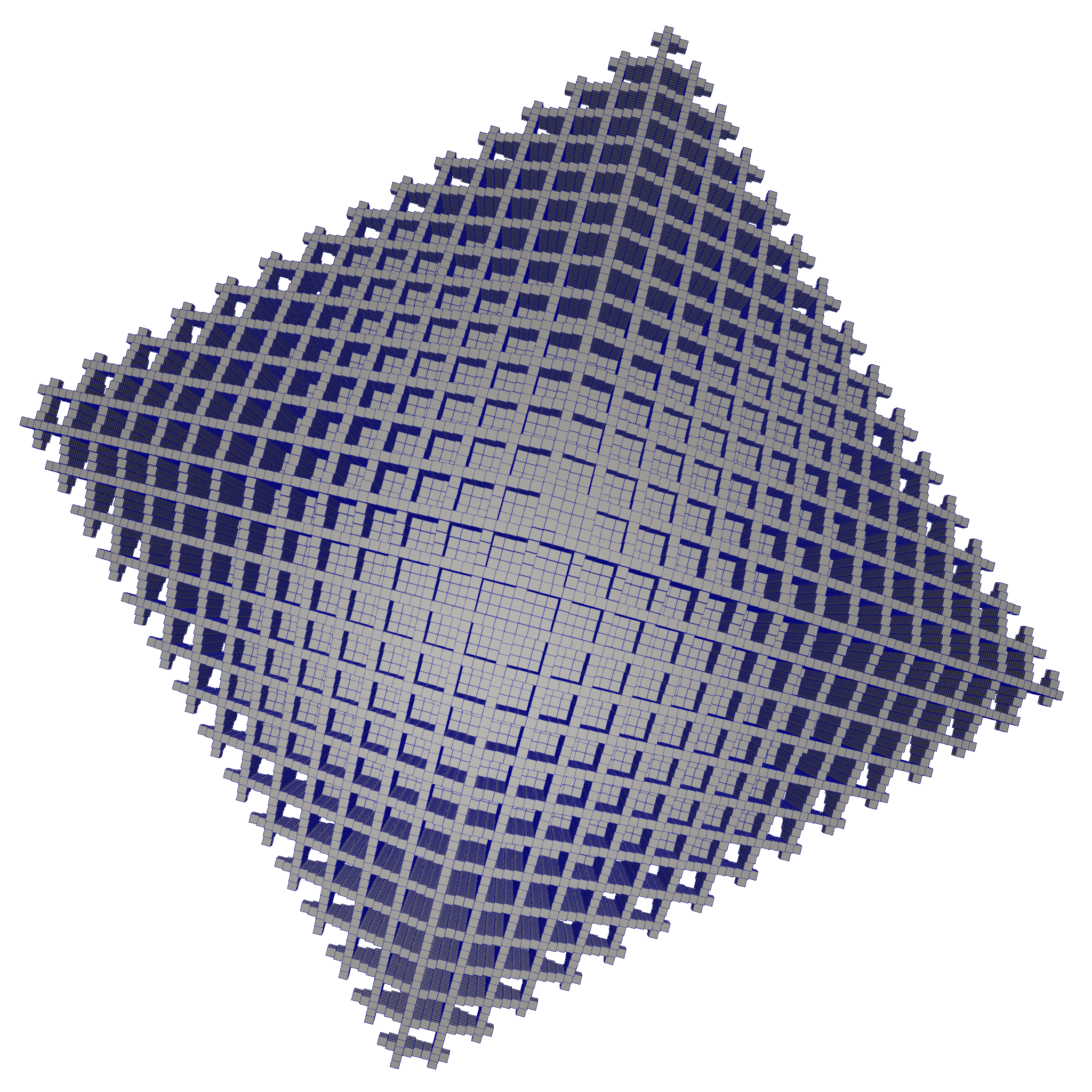}
	\caption{The counterexample from Theorem \ref{theorem:counterexample} for $d = 2$ and $d = 3$.}
	\label{fig:counterexample}
\end{figure}

We explicitly demonstrate a family of checkerboard backgrounds which are explosive but do not have a limit shape. Our counterexample is essentially a two-dimensional one. After constructing it in two dimensions, we embed it into higher-dimensions and show failure of convergence by comparison with the two-dimensional counterexample.

\subsection{Proof of Theorem \ref{theorem:counterexample} for \texorpdfstring{$d=2$}{d=2}} \label{sec:counterexample_2D}
We use the notation of Section \ref{sec:generalization}.  Let $\mathcal{B} := \{ x \in \Z^2 : 0 \leq x \leq 3\}$
denote a box of side length 4 and take $\zeta_1,\zeta_2: \mathcal{B} \to \{1,2, 3\}$ as
\[
\zeta_1 := 
\begin{bmatrix}
1 & 3 & 3 & 1 \\
3 & 3 & 3 & 3 \\
3 & 3 & 3 & 3  \\
1 & 3 & 3 & 1
\end{bmatrix}
\qquad 
\zeta_2 := 
\begin{bmatrix}
1 & 3 & 3 & 1 \\
3 & 2 & 3 & 3 \\
3 & 3 & 3 & 3  \\
1 & 3 & 3 & 1
\end{bmatrix}, 
\]
where the lower-left corner of the box is $(0,0)$ and left-to-right and down-to-up are increasing coordinates.
Let $\eta$ be an arbitrary tiling of $\zeta_1, \zeta_2$; for example, $\eta$ could be a sample from the 
random checkerboard measure. Fix coordinates so that 
\[
\eta(x_1, x_2) = \zeta_i(x_1 \mbox{ mod }  4, x_2  \mbox{ mod }  4).
\]
Let $v_t$ be the sequence of parallel toppling odometers for $s_0 = \eta + 3  \delta_0$. We first verify \eqref{eq:counterexample_lower}.

\subsubsection*{Step 1: Proof of \eqref{eq:counterexample_lower}}
We show for all $n \geq 0$
\begin{equation}
(4 n + 1) \leq \mathcal{T}(4 n + 1) \leq 4 n + 4.
\end{equation} 
By inspection, $v_1(0) = 1$ and $v_2(e_1) = v_2(e_2) = 1$. Now, take $n \geq 1$
and observe that there is a line of 3s connecting $e_2$ to $(e_2 + (4 n + 1)e_1)$.
Thus, $v_{4 n+3}(e_2 + (4 n + 1)e_1) = 1$ and $v_{4 n+4}( (4 n + 1)e_1) = 1$.
The lower bound is immediate from $\eta \leq (2d - 1)$ --- a site can fire only if a neighbor has fired previously.

\subsubsection*{Step 2: $\eta$ is explosive}
By Theorem 2.8 in \cite{fey2009stabilizability}, 
it suffices to construct a toppling procedure
which transforms  $\eta$ into a configuration $\eta'$ which is not stabilizable
in such a way that only sites with at least $2 d$ chips are toppled. 
Start by toppling the origin, then every 3, then every 2, then the $2 \times 2$ box of 1s 
containing the origin,
\[
\eta' := \eta + \Delta \left( 1\{\eta \geq 2\} + \delta_0 + \delta_{-e_1} + \delta_{-e_2} + \delta_{-e_1-e_2} \right).
\]
(It may be checked that this is indeed a legal toppling procedure in the sense of \cite{fey2009stabilizability}.)
The resulting configuration is (away from the origin) a tiling of $\zeta_1',\zeta_2' : \mathcal{B} \to \{2, 3\}$, 
\begin{equation} \label{eq:modified_checkerboard}
\zeta_1' := 
\begin{bmatrix}
3 & 2 & 2 & 3 \\
2 & 3 & 3 & 2 \\
2 & 3 & 3 & 2  \\
3 & 2 & 2 & 3
\end{bmatrix}
\qquad 
\zeta_2' := 
\begin{bmatrix}
3 & 2 & 2 & 3 \\
2 & 2 & 3 & 2 \\
2 & 3 & 3 & 2  \\
3 & 2 & 2 & 3
\end{bmatrix}.
\end{equation}
\begin{remark}
	The reason why convergence fails for this counterexample is that the limit 
	shape of the explosive background $\eta'$ is not a diamond. See Figure \ref{fig:counterexample}.
	When $d=2$ the limit shape is a regular octagon with boundary $\max(|x-y/3|, |x+y/3|, |x/3 - y|, |x/3 + y|)$, but we will not prove this. 
\end{remark}

Both $\zeta_1'$ and $\zeta_2'$ are box-crossing, so it remains to check 
that we can construct a sequence of firings to the outer face
of a box away from the origin. The box containing
the origin is at least
\[
\eta'|_{[0,3]^2} \geq
\begin{bmatrix}
3 & 2 & 2 & 3 \\
2 & 2 & 3 & 2 \\
3 & 3 & 3 & 2 \\
1 & 3 & 2 & 3
\end{bmatrix}.
\]
From this we see that $3  \delta_0 + \eta'$ is not stabilizable --- in a finite number of steps every site in $[0,3]^2$ will fire. 

\subsubsection*{Step 3: Reductions}
Before proving \eqref{eq:counterexample_upper}, we make several reductions.
We seek to lower bound $\mathcal{T}$, therefore, we are free to add
to $\eta$ as this will only decrease the arrival time. First, we may suppose all of the boxes are $\zeta_1$ rather then $\zeta_2$. 

We then increment the background so as to reduce to a sandpile on a cylinder,  $\mathcal{C} := \{ x \in \Z^2 : x_1 \geq 0,  3 \geq x_2 \geq 0\}$. 
Specifically, let $\zeta: \mathcal{C} \to \{1, 3, 4\}$, 
\[
\zeta := 
\begin{bmatrix}
\begin{matrix}
1 & 3 & 3 & 1 \\
3 & 3 & 3 & 3 \\
3 & 3 & 3 & 3 \\
4 & 3 & 3 & 1 
\end{matrix}
& \zeta_1 & \zeta_1 & \cdots
\end{bmatrix}
\]
with the origin, $(0,0)$, on the bottom left with left-to-right, down-to-up 
increasing.  To 
periodically tile by $\zeta$: set for $x_1 \geq 0$, 
\[
\hat{\eta}(x_1, x_2) :=
\zeta(x_1, x_2 \mbox{ mod } 4)
\]
and for $x_1 < 0$, 
\[
\hat{\eta}(x_1, x_2) := \hat{\eta}(-(x_1+1), x_2).
\]
Note that $\hat{\eta} \geq \eta$. 

The structure of $\hat{\eta}$ allows us to reduce
to a symmetrized Laplacian on the cylinder $\mathcal{C}$ (see for example Lemma 2.3 in \cite{bou2020dynamic}) with reflecting boundaries at $x_1 = 0$: $v_t(-1, x_2) = v_t(0,x_2)$
and torus boundary conditions for $x_2 \in \{0,3\}$: $v_t(x_1, -1) = v_t(x_1,3)$, and $v_t(x_1, 4) = v_t(x_1,0)$.  This defines the {\it symmetrized} Laplacian $\Delta$ and nearest neighbors $y \sim x$ on $\mathcal{C}$. 

Let $\tilde{u}(x): \mathcal{C} \to \{0 ,1\}$ be  $\tilde{u}(x) := 1 \{ \hat{\eta}(x) \geq 3\}$.
Then, 
\[
\Delta \tilde{u} + \zeta
=
\begin{bmatrix}
\begin{matrix}
4 & 2 & 2 & 3 \\
2 & 3 & 3 & 2 \\
3 & 3 & 3 & 2 \\
3 & 3 & 2 & 3 
\end{matrix}
& \zeta_1' & \zeta_1' & \cdots 
\end{bmatrix}
=: \zeta'.
\]
Let $v_t: \mathcal{C} \to \Z^+$ be the symmetrized parallel toppling odometer for $\zeta$ and $w_t:\mathcal{C} \to \Z^+$ the
same for $\hat{\eta}'$ (defined with $\zeta'$ as $\hat{\eta}$ was with $\zeta$). We claim that 
\begin{equation} \label{eq:second_limit_reduction}
\mathcal{T}'(3 + 8n) \leq \mathcal{T}(3 + 8 n), 
\end{equation}
where $\mathcal{T}'(n) := \min\{t > 0 : w_t(n e_1) > 0\}$.
In fact, we claim
\begin{equation} \label{eq:topple_compare_induc}
v_t \leq w_t + \tilde{u}
\end{equation}
for all $t \geq 0$.  This includes \eqref{eq:second_limit_reduction} as $\tilde{u}( (3 + 8 n) e_1) = 0$. 
We observe \eqref{eq:topple_compare_induc} is a consequence of induction: the base case $t = 0$ is automatic and 
the inductive step is, 
\begin{align*}
v_{t+1}(x) &=  \lfloor \frac{\sum_{y \sim x}  v_t(y) + \zeta(x)}{4} \rfloor  \\
&\leq  \lfloor \frac{\sum_{y \sim x}  (w_t(y) + \tilde{u}(y)) + \zeta(x)}{4} \rfloor  \\
&= \tilde{u}(x) +  \lfloor \frac{\sum_{y \sim x}  w_t(y) + \Delta \tilde{u}(x) + \zeta(x)}{4} \rfloor  \\
&= \tilde{u}(x) +  w_{t+1}(x).
\end{align*}

\subsubsection*{Step 4: Proof of \eqref{eq:counterexample_upper}}
We show for all $n \geq 1$,
\begin{equation} \label{eq:second_limit_upper_bound}
\mathcal{T}'(3 + 8 n) \geq 12 n.
\end{equation}
We do so by building a `pulsating front' for $w_t$ in the horizontal direction. (Readers interested in pulsating fronts in periodic media on $\R^d$ may see Section 2.2 of \cite{xin2009introduction}.)

We first reduce to the last-wave for $w_t$, $\hat{w}_t$, with initial conditions
$\hat{w}_0 = \delta_{(0,3)}$ and
$\hat{s}_0 = \zeta' + \Delta \hat{w}_0$. The justification is identical to Step 1 of the proof of Proposition \ref{prop:approximate_subadditive}
and so is omitted.  Using $\hat{w}_t \leq 1$, we make another reduction to initial condition
$\hat{w}_0: \mathcal{C} \to \{0, 1\}$, 
\[
\hat{w}_0 = 
\begin{bmatrix} 
\begin{smallmatrix}
1 & 1 & 1 & 1 \\
1 & 1 & 1 & 0 \\
1 & 1 & 1 & 0 \\
1 & 1 & 1 & 1 
\end{smallmatrix}
& \mathbf{0} & \mathbf{0} & \cdots
\end{bmatrix}.
\]
\setcounter{MaxMatrixCols}{50}
We now show, by manual computation, that the configuration of the odometer at the front, the rightmost $4 \times 4$ box in $\mathcal{C}$ containing a site which has toppled, 
is 12-periodic in time. For notational ease, we denote sites which have toppled by $*$, 
\begin{align*}
\hat{s}_0 &= 
\begin{bmatrix}
\begin{smallmatrix}
* & * & * & * & 4 & 2 & 2 & 3 & 3 & 2 & 2 & 3  & 3 \\
* & * & * & 4 & 2 & 3 & 3 & 2 & 2 & 3 & 3 & 2 & 2 \\
* & * & * & 4 & 2 & 3 & 3 & 2 & 2 & 3 & 3 & 2 & 2 \\ 
* & * & * & * & 4 & 2 & 2 & 3 & 3 & 2 & 2 & 3 & 3
\end{smallmatrix}
& 
\cdots
&
\end{bmatrix}
\qquad
\hat{s}_7 = 
\begin{bmatrix}
\begin{smallmatrix}
* & * & * & * & * & * & * & * & * & 3 & 2 & 3 & 3 \\
* & * & * & * & * & * & * & * & 4 & 3 & 3 & 2 & 2  \\
* & * & * & * & * & * & * & * & 4 & 3 & 3 & 2 & 2 \\
* & * & * & * & * & * & * & * & * & 3 & 2 & 3 & 3
\end{smallmatrix}
& 
\cdots
&
\end{bmatrix} \\
\hat{s}_1 &= 
\begin{bmatrix}
\begin{smallmatrix}
* & * & * & * & * & 3 & 2 & 3 & 3 & 2 & 2 & 3 & 3 \\ 
* & * & * & * & 4 & 3 & 3 & 2 & 2 & 3 & 3 & 2 & 2 \\
* & * & * & * & 4 & 3 & 3 & 2 & 2 & 3 & 3 & 2 & 2 \\
* & * & * & * & * & 3 & 2 & 3 & 3 & 2 & 2 & 3 & 3
\end{smallmatrix}
& 
\cdots
&
\end{bmatrix} 
\qquad \hat{s}_8 =
\begin{bmatrix}
\begin{smallmatrix}
* & * & * & * & * & * & * & * & * & 3 & 2 & 3 & 3 \\ 
* & * & * & * & * & * & * & * & * & 4 & 3 & 2 & 2 \\ 
* & * & * & * & * & * & * & * & * & 4 & 3 & 2 & 2 \\ 
* & * & * & * & * & * & * & * & * & 3 & 2 & 3 & 3
\end{smallmatrix}
& 
\cdots
&
\end{bmatrix} \\
\hat{s}_2 &= 
\begin{bmatrix}
\begin{smallmatrix}
* & * & * & * & * & 3 & 2 & 3 & 3 & 2 & 2 & 3 & 3 \\
* & * & * & * & * & 4 & 3 & 2 & 2 & 3 & 3 & 2 & 2 \\ 
* & * & * & * & * & 4 & 3 & 2 & 2 & 3 & 3 & 2 & 2 \\
* & * & * & * & * & 3 & 2 & 3 & 3 & 2 & 2 & 3 & 3
\end{smallmatrix}
& 
\cdots
&
\end{bmatrix} 
\qquad \hat{s}_9 =
\begin{bmatrix}
\begin{smallmatrix}
* & * & * & * & * & * & * & * & * & 4 & 2 & 3 & 3 \\ 
* & * & * & * & * & * & * & * & * & * & 4 & 2 & 2 \\ 
* & * & * & * & * & * & * & * & * & * & 4 & 2 & 2 \\
* & * & * & * & * & * & * & * & * & 4 & 2 & 3 & 3
\end{smallmatrix}
& 
\cdots
&
\end{bmatrix} \\
\hat{s}_3 &= 
\begin{bmatrix}
\begin{smallmatrix}
* & * & * & * & * & 4 & 2 & 3 & 3 & 2 & 2 & 3 & 3 \\ 
* & * & * & * & * & * & 4 & 2 & 2 & 3 & 3 & 2 & 2 \\
* & * & * & * & * & * & 4 & 2 & 2 & 3 & 3 & 2 & 2 \\
* & * & * & * & * & 4 & 2 & 3 & 3 & 2 & 2 & 3 & 3
\end{smallmatrix}
& 
\cdots
&
\end{bmatrix} 
\qquad \hat{s}_{10} = 
\begin{bmatrix}
\begin{smallmatrix}
* & * & * & * & * & * & * & * & * & * & 4 & 3 & 3 \\ 
* & * & * & * & * & * & * & * & * & * & * & 3 & 2 \\ 
* & * & * & * & * & * & * & * & * & * & * & 3 & 2 \\ 
* & * & * & * & * & * & * & * & * & * & 4 & 3 & 3
\end{smallmatrix}
& 
\cdots
&
\end{bmatrix} \\
\hat{s}_4 &= 
\begin{bmatrix}
\begin{smallmatrix}
* & * & * & * & * & * & 4 & 3 & 3 & 2 & 2 & 3 & 3 \\ 
* & * & * & * & * & * & * & 3 & 2 & 3 & 3 & 2 & 2 \\ 
* & * & * & * & * & * & * & 3 & 2 & 3 & 3 & 2 & 2 \\ 
* & * & * & * & * & * & 4 & 3 & 3 & 2 & 2 & 3 & 3
\end{smallmatrix}
& 
\cdots
&
\end{bmatrix}
\qquad \hat{s}_{11} = 
\begin{bmatrix}
\begin{smallmatrix}
* & * & * & * & * & * & * & * & * & * & * & 4 & 3 \\ 
* & * & * & * & * & * & * & * & * & * & * & 3 & 2 \\
* & * & * & * & * & * & * & * & * & * & * & 3 & 2 \\
* & * & * & * & * & * & * & * & * & * & * & 4 & 3
\end{smallmatrix}
& 
\cdots
&
\end{bmatrix} \\
\hat{s}_5 &= 
\begin{bmatrix}
\begin{smallmatrix}
* & * & * & * & * & * & * & 4 & 3 & 2 & 2 & 3 & 3 \\ 
* & * & * & * & * & * & * & 3 & 2 & 3 & 3 & 2 & 2 \\ 
* & * & * & * & * & * & * & 3 & 2 & 3 & 3 & 2 & 2 \\ 
* & * & * & * & * & * & * & 4 & 3 & 2 & 2 & 3 & 3
\end{smallmatrix}
& 
\cdots
&
\end{bmatrix} 
\qquad \hat{s}_{12} = 
\begin{bmatrix}
\begin{smallmatrix}
* & * & * & * & * & * & * & * & * & * & * & *  & 4 \\
* & * & * & * & * & * & * & * & * & * & * & 4  & 2 \\
* & * & * & * & * & * & * & * & * & * & * & 4 &  2 \\ 
* & * & * & * & * & * & * & * & * & * & * & * & 4
\end{smallmatrix}
& 
\cdots
&
\end{bmatrix}. \\
\hat{s}_6 &= 
\begin{bmatrix}
\begin{smallmatrix}
* & * & * & * & * & * & * & * & 4 & 2 & 2 & 3 & 3 \\ 
* & * & * & * & * & * & * & 4 & 2 & 3 & 3 & 2 & 2 \\ 
* & * & * & * & * & * & * & 4 & 2 & 3 & 3 & 2 & 2 \\ 
* & * & * & * & * & * & * & * & 4 & 2 & 2 & 3 & 3
\end{smallmatrix}
& 
\cdots
&
\end{bmatrix}
\end{align*}

This shows that
\[
\hat{w}_{12} = 
\begin{bmatrix}
\begin{smallmatrix}
1 & 1 & 1 & 1 & 1 & 1 & 1 & 1 & 1 & 1 & 1 & 1 \\ 
1 & 1 & 1 & 1 & 1 & 1 & 1 & 1 & 1 & 1 & 1 & 0 \\ 
1 & 1 & 1 & 1 & 1 & 1 & 1 & 1 & 1 & 1 & 1 & 0 \\ 
1 & 1 & 1 & 1 & 1 & 1 & 1 & 1 & 1 & 1 & 1 & 1
\end{smallmatrix}
& 
\mathbf{0} 
&
\mathbf{0} 
& \cdots
\end{bmatrix},
\]
so the odometer at the front is identical to what it was at the start and the process `resets'. Hence, 
by induction on $n$,
\begin{equation}
\hat{T}(3 + 8 n) = 12 n, 
\end{equation}
where $\hat{T}(n) := \min\{ t > 0 : \hat{w}_t(n e_1) > 0 \}$. 
This implies  \eqref{eq:second_limit_upper_bound}, completing the proof by \eqref{eq:second_limit_reduction}.

\subsection{Proof of Theorem \ref{theorem:counterexample} for \texorpdfstring{$d \geq 3$}{d > 2}} \label{sec:proof_of_generalization}
Let $\eta^{(2D)}: \Z^2 \to \{1, 2, 3\}$ be the (possibly random) background defined in Section \ref{sec:counterexample_2D} and let $d \geq 3$ be given. Our higher-dimensional counterexample $\eta: \Z^d \to \{ 2 d - 3, 2 d -2, 2 d - 1\}$
is built by stacking the two-dimensional one, 
\begin{equation} \label{eq:stacked_tiling1}
\eta(x_1, x_2, \ldots, x_d) = 2  (d- 2) + \eta^{(2D)}(x_1, x_2) \mbox{ for all $x \in \Z^d$}.
\end{equation}
\subsubsection*{Step 1: Proof of \eqref{eq:counterexample_lower}}
The argument is identical to $d = 2$.

\subsubsection*{Step 2: $\eta$ is explosive}

Let ${\eta'}^{(2D)}$ be the tiling of $\zeta_1', \zeta_2'$, defined in \eqref{eq:modified_checkerboard}.
The higher-dimensional analogue, $\eta': \Z^d \to \{ 2 d - 3, 2 d -2, 2 d - 1\}$ is also stacked, 
\begin{equation} \label{eq:stacked_tiling2}
\eta'(x_1, x_2, \ldots, x_d) := {\eta'}^{(2D)}(x_1, x_2) + 2  (d - 2).
\end{equation}
The argument is as before: we construct a toppling procedure that transforms $\eta$ into $\eta'$. Since $3  \delta_0 + \eta' \geq (2 d - 2)$ is box-crossing, it is not stabilizable. 

Topple the origin, all sites with $(2d - 1)$ chips, then all sites with $(2d - 2)$, then the column of $(2 d-3)$ near the origin. Let $\tilde{u}$ denote the odometer for this and $\tilde{u}^{(2D)}$ the two-dimensional version and observe that $\tilde{u}(x_1, x_2, \ldots, x_d) = \tilde{u}^{(2D)}(x_1,x_2)$. This implies,
\begin{align*}
&\sum_{i=1}^{d} (\tilde{u}(x-e_i) + \tilde{u}(x+e_i) - 2 \tilde{u}(x)) + \eta \\
&= \sum_{i=1}^2 (\tilde{u}^{(2D)}(x-e_i) + \tilde{u}^{(2D)}(x+e_i) - 2 \tilde{u}^{(2D)}(x)) + \eta \\
&= \sum_{i=1}^2 (\tilde{u}^{(2D)}(x-e_i) + \tilde{u}^{(2D)}(x+e_i) - 2 \tilde{u}^{(2D)}(x)) + \eta^{(2D)} + 2  (d-2) \\
&= {\eta'}^{(2D)} + 2  (d-2) \\
&= \eta'. 
\end{align*}
We conclude by observing $\eta'$ is box-crossing as $3  \delta_0 + \eta' \geq (2 d - 2)$ and every layer in the box  $\mathbf{B}^{(d)} := \{ x \in \Z^d : 0 \leq x \leq 3 \}$, contains at least one site with $(2 d - 1)$ chips. Indeed, for all $(3, 3, \mathbf{x}_{d-2}) \in \mathbf{B}^{(d)}$, 
\[
\eta'(3,3, \mathbf{x}_{d-2}) = {\eta'}^{(2D)}(3, 3) + 2  (d - 2) = (2 d - 1),
\]
and every layer in ${\eta'}^{(2D)}$ has at least one site with 3 chips.

\subsubsection*{Step 3: Proof of \eqref{eq:counterexample_upper}}
Let $v_t^{(2D)}$ be the parallel toppling odometer for $\eta^{(2D)}$. By \eqref{eq:second_limit_upper_bound}
it suffices to show that
\begin{equation} \label{eq:parabolic_least_action}
v_t(x_1, x_2, \ldots, x_d) \leq v_t^{(2D)}(x_1,x_2). 
\end{equation}
This is a consequence of the parabolic least action principle (Lemma 2.3 in \cite{bou2020dynamic}) but 
we provide a self-contained proof here:

Suppose \eqref{eq:parabolic_least_action} holds at time $t$ and we want to show it holds at $(t+1)$.
Let $x$ be given. If $v_t(x) < v_t^{(2D)}(x_1,x_2)$ we are done. Hence, we may suppose $v_t(x) = v_t^{(2D)}(x_1,x_2)$
and 
\[
\Delta v_t(x) + \eta = \sum_{i=1}^d (v_t(x + e_i) + v_t(x - e_i) - 2 v_t(x)) + \eta \geq 2 d.
\]
By \eqref{eq:parabolic_least_action} at time $t$, this implies 
\[
\sum_{i=1}^2 (v_t^{(2D)}((x_1,x_2) + e_i) + v_t^{(2D)}((x_1,x_2) - e_i) - 2 v_t^{(2D)}(x_1,x_2)) + \eta \geq 2 d,
\]
and by \eqref{eq:stacked_tiling1}, 
\[
\Delta v_t^{(2D)}(x_1,x_2) + \eta^{(2D)}(x_1,x_2) \geq 2 d - 2  (d - 2) = 4,
\]
completing the proof.

\section{A criterion for exploding} \label{sec:criteria}

Let $(\Omega, \mathcal{F}, \mathbf{P})$ be a probability space of sandpile backgrounds defined in Section \ref{sec:generalization}. 
For a finite domain $V \subset \Z^d$, we say  $\eta: V \to \Z$ is {\it recurrent} if the firing of $\partial V$ causes every site in $V$ to eventually topple. 
Specifically, the $V^c$-frozen parallel toppling odometer for initial conditions $w_0 = 1\{ x \in \partial V\}$, $s_0' = \eta$, 
is eventually 1 on $V$: $w_t |_{V} = 1$ for $t \geq |V|$. We say $\eta: \Z^d \to \Z$ is recurrent if its
restriction to $V$ is recurrent for every finite $V \subset \Z^d$.  The measure $\mathbf{P}$ is recurrent if $\mathbf{P}(\mbox{$\eta$ is recurrent}) = 1$. The arguments in Section \ref{sec:generalization} imply the following criterion. 

\begin{prop} \label{prop:explosion_criteria}
	If Hypotheses \ref{hyp:finite_range_of_dependence} and \ref{hyp:box_crossing} 
	are satisfied and $\mathbf{P}$ is recurrent
	then $\mathbf{P}$ is explosive. 
\end{prop}

In this section, we use Proposition \ref{prop:explosion_criteria} to prove Theorem \ref{theorem:explosive}.
Let $\mathbf{P}$ denote a product measure with $\mathbf{P}(\eta \geq d) = 1$ and $\mathbf{P}(\eta(0) = 2 d -1) > 0$. 
We show that $\mathbf{P}$ is recurrent and box-crossing. 
Both arguments require a form of dimensional reduction, which we record in the first subsection.
Also, by monotonicity, we may assume $\eta \to \{d, 2 d - 1\}$.

\subsection{Dimensional reduction}
Let $Q_n^{(d)} := \{ x \in \Z^d : 1 \leq x \leq n\}$ and denote each of the (internal) $2d$ faces of $Q_n^{(d)}$ as 
\begin{equation}\label{eq:faces_definition}
\begin{aligned}
\mathcal{F}_i(Q_n^{(d)}) &:= \{ x \in Q_n^{(d)} : x_i = 1\} \\
\mathcal{F}_{d+i}(Q_n^{(d)}) &:= \{ x \in Q_n^{(d)} : x_i = n\}.
\end{aligned}
\end{equation}
We show that after firing the outer boundary of $\mathcal{F}_i(Q_n^{(d)})$, 
the sandpile dynamics on $\mathcal{F}_i(Q_n^{(d)})$ can be coupled
with $(d-1)$-dimensional sandpile dynamics on $Q_n^{(d-1)}$.

Specifically, fix $i = 1$,  and for each $x \in \mathcal{F}_{1}(Q_n^{(d)})$ write $x = (1, \mathbf{x}_{d-1})$. 
Let the initial $d$-dimensional background be given, $\eta^{(d)}: Q_n^{(d)} \to \Z$. 
We consider the sequences $s_t^{(d-1)}$, $u_t^{(d-1)}$, and $s_t^{(d)}$, $u_t^{(d)}$ of $(Q_n^{(d)})^c$, $(Q_n^{(d-1)})^c$ {\it constrained  frozen toppling processes} with initial conditions,
\begin{equation} \label{eq:constrained_toppling}
\begin{aligned}
u_0^{(d)}(0,\mathbf{x}_{d-1}) &= 1 \\
u_0^{(d-1)}(\mathbf{x}_{d-1}) &= u_0^{(d)}(1, \mathbf{x}_{d-1}) \\
s_0^{(d)}(1, \mathbf{x}_{d-1}) &=  \Delta u_0^{(d)}(1, \mathbf{x}_{d-1}) + \eta^{(d)}(1,\mathbf{x}_{d-1}) \\
s_0^{(d-1)}(\mathbf{x}_{d-1}) &=  \eta^{(d)}(\mathbf{x}_{d-1})-1 := \eta^{(d-1)}(\mathbf{x}_{d-1})
\end{aligned}
\end{equation}
and constraints,
\begin{equation} \label{eq:frozen_constraint}
\begin{aligned}
u_t^{(d)}(x_1,\mathbf{x}_{d-1}) &= 0 \quad \mbox{ for all $x_1 > 1$} \\
u_t^{(d)}(1, \mathbf{x}_{d-1}) & \leq 1 \\
u_t^{(d-1)}(\mathbf{x}_{d-1}) &\leq 1,
\end{aligned}
\end{equation}
for all $t \geq 0$. Specifically, we take initial conditions given by \eqref{eq:constrained_toppling}
and enforce the constraints \eqref{eq:frozen_constraint} by defining the odometer sequences for $t \geq 0$ as
\[
u_{t+1}^{(d)}(x) = 
\begin{cases}
u_t^{(d)}(x) \qquad \mbox{if $x \in (Q_n^{(d)})^c$} \\
u_t^{(d)}(x) \qquad \mbox{if $x_1 > 1$} \\
u_t^{(d)}(x) \qquad \mbox{if $x_1 = 1$ and $u_t^{(d)}(x)  = 1$} \\
u_t^{(d)}(x) + 1\{s_t^{(d)}(x) \geq 2 d\} \qquad \mbox{otherwise}
\end{cases}
\]
and
\[
u_{t+1}^{(d-1)}(\mathbf x_{d-1}) = 
\begin{cases}
u_t^{(d-1)}(\mathbf x_{d-1}) \qquad \mbox{if $x \in (Q_n^{(d-1)})^c$} \\
u_t^{(d-1)}(\mathbf x_{d-1}) \qquad \mbox{if $u_t^{(d-1)}(\mathbf{x}_{d-1}) = 1$} \\
u_t^{(d-1)}(\mathbf x_{d-1}) + 1\{s_t^{(d-1)}(\mathbf x_{d-1}) \geq 2 (d-1)\} \qquad \mbox{otherwise}.
\end{cases}
\]
The sandpile sequences are defined as in the unconstrained case. 
Observe that constrained frozen toppling odometers are dominated by the corresponding unconstrained odometers.

We prove the following by an induction on time. Note that a symmetric result holds for every face. 
\begin{prop} \label{prop:dimensional_reduction}
	For all $t \geq 0$ and $(1, \mathbf{x}_{d-1}) \in \mathcal{F}_1(Q_n^{(d)})$, 
	\begin{equation} \label{eq:odometer_reduction}
	u_t^{(d)}(1,\mathbf{x}_{d-1}) = u_t^{(d-1)}(\mathbf{x}_{d-1})
	\end{equation}
	and
	\begin{equation} \label{eq:sandpile_reduction}
	s_t^{(d)}(1,\mathbf{x}_{d-1}) = s_t^{(d-1)}(\mathbf{x}_{d-1}) + 2 \qquad \mbox{ if $ u_t^{(d)}(1,\mathbf{x}_{d-1}) = 0 $}.
	\end{equation}
\end{prop}

\begin{proof}
	For all $t \geq 0$, if  $u_{t}^{(d)}(1,\mathbf{x}_{d-1}) = 0$ and $u_t^{(d)}(1,\mathbf{y}_{d-1}) = u_t^{(d-1)}(\mathbf{y}_{d-1})$ for all $\mathbf{y}_{d-1}$, 
	\begin{align*}
	&s_{t}^{(d)}(1, \mathbf{x}_{d-1}) \\
	&=  \eta^{(d)}(1, \mathbf{x}_{d-1})  \\
	&+ \left( -2 u_{t}^{(d)}(1, \mathbf{x}_{d-1}) + u_{t}^{(d)}(0, \mathbf{x}_{d-1}) + u_{t}^{(d)}(2, \mathbf{x}_{d-1}) \right) \\
	&+ \sum_{i=2}^d \left( u_{t}^{(d)}((1,\mathbf{x}_{d-1}) + e_i) + u_{t}^{(d)}((1,\mathbf{x}_{d-1})-e_i) - 2 u_{t}^{(d)}(1,\mathbf{x}_{d-1}) \right) \\
	&= \eta^{(d-1)}(\mathbf{x}_{d-1}) + 2 \\
	&+  \sum_{i=1}^{d-1} \left( u_{t}^{(d-1)}(\mathbf{x}_{d-1} + e_i) + u_{t}^{(d-1)}(\mathbf{x}_{d-1}-e_i) - 2 u_{t}^{(d-1)}(\mathbf{x}_{d-1}) \right) \\
	&=s_{t}^{(d-1)}(\mathbf{x}_{d-1})+2.
	\end{align*}
	
	Therefore, we may begin the induction and suppose \eqref{eq:odometer_reduction} and \eqref{eq:sandpile_reduction} hold at time $t$.
	If $s_t^{(d-1)}(\mathbf{x}_{d-1}) \geq 2  (d - 1) = (2 d - 2)$, then 
	$s_t^{(d)}(1,\mathbf{x}_{d-1}) = 2 d$. The other direction is identical, showing $u_{t+1}^{(d)}(1, \mathbf{x}_{d-1}) = u_{t+1}^{(d-1)}(\mathbf{x}_{d-1}) = 1$ in this case. 
	
\end{proof}

\subsection{\texorpdfstring{$\mathbf{P}$}{P} is recurrent}
By monotonicity of recurrence, it suffices to prove the following. 
\begin{prop} \label{prop:recurrence}
	For every $d \geq 1$, $\eta:\Z^d \to \{d\}$ is recurrent.
\end{prop}
\begin{proof}
	By consistency of recurrence, it suffices to show this for domains which are cubes (see for example Remark 3.2.1 in \cite{redig2005mathematical}). Write $Q_n^{(d)}$ for a cube of side length $n$ in $\Z^d$.
	We induct on dimension, then cube side length. The base case for dimension $d = 1$ is immediate. 
	Moreover, the base case $n = 2$ is also immediate for every dimension.
	It remains to check $\eta: Q_n^{(d)} \to \{d\}$ is recurrent given $\eta: Q_{n-2}^{(d)} \to \{d\}$ is recurrent.
	
	We decompose the cube into its faces and an inner cube, 
	\begin{equation} \label{eq:cube_decomp}
	Q_{n}^{(d)} = Q_{n-2}^{(d)} \cup \bigcup_{i=1}^{2d} \mathcal{F}_i(Q_n^{(d)}).
	\end{equation}
	By the inductive hypotheses on $n$, once every external face of $Q_{n-2}^{(d)}$ is toppled, every site in $Q^{(d)}_{n-2}$ 
	eventually fires. Therefore, by \eqref{eq:cube_decomp}, it suffices to check that every site in  
	$\mathcal{F}_i(Q_n^{(d)})$ fires after the boundary of $Q_n^{(d)}$ fires.  This, however, is a consequence of the inductive hypothesis
	on dimension and Proposition \ref{prop:dimensional_reduction}, any site in $\mathcal{F}_i(Q_n^{(d)})$ which topples for the $(d-1)$-dimensional process also topples in $d$-dimensions.
	
\end{proof}
\begin{remark}
	The argument given here is similar to the proof of Lemma 3.1 in \cite{schonmann1992behavior}.
\end{remark}

\subsection{\texorpdfstring{$\mathbf{P}$}{P} is box-crossing}
A coupling between the sandpile and bootstrap percolation has been observed before  \cite{fey2010growth}. 
Bootstrap percolation is a cellular automata on $\Z^d$ with a random initial state and a deterministic update rule. 
Every site $x \in \Z^d$ starts off as {\it infected} independently at random with probability $p$. 
Infected sites remain infected and if an uninfected site contains at least $d$ neighbors which
are infected, it becomes infected.

These dynamics exactly match parallel toppling for a background $\eta':\Z^d \to \{d, 2 d\}$
where sites are constrained to topple at most once.
Infected sites are those which have toppled, and sites with $2 d$ chips start as infected. Indeed, any 
site beginning with $d$ chips topples if and only if it has at least $d$ neighbors which have toppled.

Our proof that $\mathbf{P}$ is box-crossing uses this coupling together with a large deviation result of Schonmann. Borrowing the terminology of Schonmann, we say a cube $Q_n^{(d)} \subset \Z^d$ is $\eta'-${\it internally spanned}
if the $(Q_n^{(d)})^c$-frozen parallel toppling procedure with $u_0 = 0$, $s_0' = \eta'$, and sites constrained to topple at most once, concludes with every site in $Q_n^{(d)}$ toppling.

\begin{prop}[\cite{schonmann1992behavior}] \label{prop:schonnmann_bootstrap}
	Let
	\[
	\eta'(x) := \begin{cases} 2 d &\mbox{with probability $p$} \\
	d &\mbox{otherwise.}
	\end{cases}
	\]
	There are constants $c,C$ depending only on dimension and $p > 0$
	so that 
	\begin{equation}
	P(\mbox{$Q_n^{(d)}$ is $\eta'$-internally spanned}) \geq 1 -  c \exp(-C n).
	\end{equation}
\end{prop}

We use Proposition \ref{prop:schonnmann_bootstrap} to show $\mathbf{P}$ is box-crossing. 

\begin{prop}
	In all dimensions, $\mathbf{P}$ is box-crossing
\end{prop}
\begin{proof}
	The claim is immediate in dimension one.
	Let $(d+1) \geq 2$ be given. For $n \geq 1$, decompose the box into layers
	\[
	Q_n^{(d+1)} = \bigcup_{i=1}^{n} \mathcal{L}_i,
	\]
	where $\mathcal{L}_i := \{ x \in Q_n^{(d+1)} : x = (i, \mathbf{x}_d)\}$.
	The projection of each layer to a $d$-dimensional box is $\mathcal{L}_i^{(d)}$. Let 
	\begin{align*}
	\Omega_i' &:= \{\eta : \mbox{$\mathcal{L}_i^{(d)}$ is $\eta'$-internally spanned where $\eta'(\mathbf{x}_d) := \eta(i, \mathbf{x}_d) - 1$} \}
	\end{align*}
	and let 
	\[
	\Omega'  := \cap_{i=1}^n \Omega_i. 
	\]
	By definition of being internally spanned and Proposition \ref{prop:dimensional_reduction}, if $\eta \in \Omega'$, 
	then $Q_n^{(d+1)}(\eta)$ can be crossed in direction $e_1$. We conclude by symmetry and Proposition \ref{prop:schonnmann_bootstrap}.
	
\end{proof}

\bibliography{exploding_rand.bib}{}
\bibliographystyle{amsalpha}

\end{document}